\documentclass[a4paper,10pt]{article}
\usepackage[utf8]{inputenc}
\usepackage[T1]{fontenc}
\usepackage[english]{babel}
\usepackage{amsmath}
\usepackage{amsthm}
\usepackage{amsfonts}
\usepackage{amssymb}
\usepackage{listings}
\usepackage{graphicx}
\usepackage{mathrsfs}
\usepackage{hyperref}
\usepackage{float}

\usepackage{algorithm}
\usepackage{algorithmic}

\usepackage{subfig}
\usepackage{stmaryrd}
\usepackage{bm}
\usepackage{tikz}
\usepackage{braket}
\usepackage{wasysym}
\usepackage{wrapfig}
\usepackage[framemethod=TikZ]{mdframed}
\usepackage{xcolor}
\usepackage{pstricks}
\usepackage{faktor}


\usepackage[top=2.00cm,bottom=3.00cm,left=2.00cm,right=2.00cm]{geometry} 


\definecolor{brightpink}{rgb}{1.0, 0.0, 0.5}

\newcommand{\revise}[1]{{{\color{black} #1}}}

\newcommand{\mc}{\mathscr}
\newcommand{\ma}{\mathcal}
\newcommand{\f}{\mathbb}

\newcommand{\cu}{\subseteq}
\newcommand{\wt}{\widetilde}

\newcommand{\ve}{\varepsilon}

\DeclareMathOperator{\diag}{diag}

\DeclareMathOperator{\rk}{rk}
\DeclareMathOperator{\conv}{conv}
\DeclareMathOperator{\argmax}{argmax}
\DeclareMathOperator{\nnz}{nnz}

\lstset{basicstyle=\small\ttfamily}
\setlength{\parindent}{12pt}
\theoremstyle{definition}

\theoremstyle{plain}
\newtheorem{theorem}{Theorem}

\newtheorem{lemma}{Lemma}
\newtheorem{remark}{Remark}

\title{On the Robustness of the Successive Projection Algorithm} 

\author{Giovanni Barbarino\thanks{GB is member of the Research Group GNCS (Gruppo Nazionale per il Calcolo Scientifico) of INdAM (Istituto Nazionale di Alta Matematica).} \qquad  Nicolas Gillis\thanks{Email: \{giovanni.barbarino, nicolas.gillis\}@umons.ac.be. Authors acknowledge the support
 by the European Union (ERC consolidator, eLinoR, no 101085607).  
 } \\ 
University of Mons, Rue de Houdain 9, 7000 Mons, Belgium   
	}

\date{}

\begin{document}

\maketitle 

\begin{abstract} 
The successive projection algorithm (SPA) is a workhorse algorithm to learn the $r$ vertices of the convex hull of a set of $(r-1)$-dimensional data  points, a.k.a.\ a latent simplex, which has numerous applications in data science. In this paper, we revisit  the robustness to noise of SPA and several of its variants. In particular, when $r \geq 3$, we prove the tightness of the existing error bounds for SPA and for two more robust preconditioned variants of SPA. We also provide significantly improved error bounds for SPA, by a factor proportional to the conditioning of the $r$ vertices, in two special cases: for the first extracted vertex, and when $r \leq 2$. We then provide further improvements for the error bounds of a translated version of SPA proposed by Arora et al.\ (``A practical algorithm for topic modeling with provable guarantees'', ICML, 2013) in two special cases: for the first two extracted vertices, and when $r \leq 3$. Finally, we propose a new more robust variant of SPA that first shifts and lifts the data points in order to minimize the conditioning of the problem. We illustrate our results on synthetic data.     
\end{abstract}

\section{Introduction} 

The problem of finding a latent simplex via the observation of noisy data points generated within that simplex is a fundamental problem in signal processing, data analysis, and machine learning. It finds applications in 
chemometrics~\cite{Araujo01, pasquini2018near}, 
hyperspectral imaging~\cite{ma2014signal}, 
audio source separation~\cite{virtanen2007monaural}, 
topic modeling~\cite{arora2013practical, bakshilearning}, and community detection~\cite{mao2021estimating, jin2024mixed, agterberg2024estimating}, to cite a few. 
The problem is posed as follows: we observe samples 
$$x_j = Wh_j + N(:,j)  \in \mathbb{R}^{m} \text{ for }  j=1,2,\dots,n,
$$ 
where 
\begin{itemize}
    \item The column of $W \in \mathbb{R}^{m \times r}$  are $r$ vertices, to be found, $\{w_k \in \mathbb{R}^{m}\}_{k=1}^r$. 

    \item The weights $h_j \in \mathbb{R}^r$ are nonnegative and sum to one, that is, they belong to the probability simplex, $h_j \in \Delta = \{ y \in \mathbb{R}^r \ | \ y \geq 0, \sum_{i=1}^r y_i = 1  \}$ for all $j$.   
    We will denote $H = [h_1, h_2, \dots, h_n] \in \mathbb{R}^{r \times n}$. 

    \item The vector $N(:,j) \in \mathbb{R}^{m}$ models the noise and model misfit. 
\end{itemize}
Hence the samples, $x_j$'s, are contained in a latent simplex\footnote{A simplex is a $d$-dimensional poltyope with $d+1$ vertices, e.g., the probability simplex. For $\conv(W) \subset \mathbb{R}^m$ to be a simplex embedded in an $m$-dimensional space, the affine hull of the columns of $W$, that is, $\{ Wx \ | \ x^\top  e = 1 \}$, must have dimension $r-1$.}, $\conv(W) = \{ x \ | \ x = Wh, h \in \Delta^r \}$, up to the noise $N(:,j)$, and the goal is to learn $W$ given $X = WH + N$. 
This problem is sometimes referred to as ``learning a latent simplex''~\cite{bakshilearning} or ``simplex-structured matrix factorization'' (SSMF)~\cite{abdolali2021simplex}, or ``probabilistic simplex component analysis''~\cite{wu2021probabilistic}; we will use SSMF in this paper. 
When $W \geq 0$, this is a nonnegative matrix factorization (NMF) problem~\cite{lee1999learning}. 

Many variants of SSMF have been studied, making various assumptions on $W$, $H$ and $N$. 
An assumption that has proved particularly powerful is \emph{separability} which requires that for each vertex (that is, each column of $W$), there exists at least one data point close to that vertex. Mathematically: for all $k=1,2,\dots,r$, there exists $j$ such that $X(:,j) = W(:,k) + N(:,j)$. This is equivalent to require that $H$  contains the identity matrix as a submatrix since the previous equation implies $H(:,j) = e_k$, where $e_k$ the $k$th unit vector. If $H$ satisfies this condition, it is said to be a separable matrix.  
The separability assumption is also known as the pure-pixel assumption in hyperspectral imaging~\cite{ma2014signal}, or the anchor-word assumption in topic modeling~\cite{arora2013practical}. In this paper, we will refer to the SSMF problem under the separability assumption as \emph{separable SSMF}. 

Countless algorithms have been developed for separable SSMF; see, e.g., \cite[Chap.~7]{gillis2020nonnegative} and the references therein. Among them, a workhorse algorithm is the successive projection algorithm (SPA)~\cite{Araujo01} which is described in Algorithm~\ref{algo:spa}. 
Geometrically, when $\rk(W) = r$, $\conv(W)$ is an $(r-1)$-dimensional simplex with $r$ vertices embedded in a higher $m$-dimensional space. 
The first step of SPA extracts the point with the largest $\ell_2$ norm. For any polytope (such as a simplex), such a point is always a vertex. Then SPA projects all data points on the orthogonal complement of that extracted point (so that this point is projected onto zero), reducing the dimension of the simplex by one, and continuing the process $(r-1)$ times. 

\algsetup{indent=2em}
\begin{algorithm}[ht!]
\caption{Successive Projection Algorithm (SPA)~\cite{Araujo01}  \label{algo:spa}}
\begin{algorithmic}[1] 
\REQUIRE The matrix $X = WH + N$ where $H \geq 0$, $H^\top  e \leq e$, $H$ is separable, $W$ is full column rank, 
the number~$r$ of columns to extract.  
 
\ENSURE Set of $r$ indices $\mathcal{J}$ 
such that $\tilde{X}(:,\mathcal{J}) \approx W$ (up to permutation). 

    \medskip  
		
\STATE Let $R = X$, $\mathcal{J} = \{\}$.  \vspace{0.1cm} 
\FOR {$k = 1, \dots r$}   \vspace{0.1cm}  
\STATE $p \in \argmax_j \| R(:,j) \|$.     
\qquad \quad \textit{\% Selection step} \vspace{0.1cm}  
\STATE $\mathcal{J} \leftarrow \mathcal{J} \cup \{p\}$. \vspace{0.1cm} 
\STATE $R \leftarrow \left(I-\frac{{R(:,p)} R(:,p)^\top }{\|{R(:,p)}\|^2}\right)R$. \hspace{0.4cm} \textit{\% Projection step}  
\vspace{0.1cm}    
\ENDFOR 
\end{algorithmic}  
\end{algorithm} 

\begin{remark}[Implementation of SPA] \label{rem:spa}
    SPA should not be implemented as in Algorithm~\ref{algo:spa}, this would be particularly ineffective for large sparse matrices, \revise{as the residual would become quickly dense (especially if the first extracted columns is dense).} It can be implemented in $\mathcal{O}(r \nnz(X))$ operations (essentially, $r$ matrix-vector products), where $\nnz(X)$ is the number of non-zero entries of $X$, using the fact that $\| (I - u u^\top ) y\|^2 = \|y\|^2 - (u^\top  y)^2$ where $u$ has unit norm and $y$ is any vector~\cite{gillis2013fast}. See \url{https://gitlab.com/ngillis/robustSPA} for a MATLAB implementation.
\end{remark}

SPA has a long history, as the index set of columns it extracts corresponds to that of QR with column pivoting~\cite{Busi1965QR}. However, it was introduced in the context of separable SSMF by Ara\'ujo et al.~\cite{Araujo01}, and rediscovered many times under different names; see~\cite[p.~229]{gillis2020nonnegative} for an historical overview. 

Besides being simple and computationally very cheap, SPA was one of the first algorithm for separable SSMF to be proved to be robust to noise~\cite{gillis2013fast}; see the next section for more details. Let us recall this result. 
\begin{theorem}~\cite[Theorem 3]{gillis2013fast} \label{th:spanoise}
Let $X = WH + N \in \mathbb{R}^{m \times n}$ where $H \in \mathbb{R}^{r \times n}_+$ is a separable matrix satisfying $H^\top  e \leq e$, with $e$ the vector of all one of appropriate dimension, and let 
\begin{equation} 
\varepsilon = \max_j \| N(:,j)\| 
\leq \mathcal{O}\left(  \frac{ \sigma_r(W)}{\sqrt{r} \mathcal K^2(W)} \right), 
\tag{Bound on the noise}
\end{equation}
where $\sigma_r(W)$ is the $r$th singular value of $W$, and $\mathcal K(W) = \frac{K(W)}{\sigma_r(W)}$ with $K(W) = \max_k \|W(:,k)\|$ is a measure of the conditioning\footnote{Note that $\mathcal K(W)$ is smaller than the condition number of $W$, $\frac{\sigma_1(W)}{\sigma_r(W)}$. In fact, recall that $\sigma_1(W) = \|W\| \ge K(W) \ge \sigma_1(W)/\sqrt r$, where $r$ is the number of columns in $W$.} of $W$. 
Then SPA (Algorithm~\ref{algo:spa}) returns a set of indices $\mathcal{J}$ such that 
\begin{equation}
\max_{1 \leq k \leq r} \min_{j} \|W(:,k) - X(:,\mathcal{J}_j) \| 
\leq 
\mathcal{O}\left(  \varepsilon \mathcal K^2(W) \right). \tag{Error bound}
\end{equation}
\end{theorem}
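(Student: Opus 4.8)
The plan is to argue by induction on the $r$ iterations of SPA, showing that at the $i$-th selection step the chosen column is within $\mathcal O(\varepsilon\,\mathcal K^2(W))$ of a vertex $w_k$ not selected before, and that after the projection step the residual again has the structure of a (smaller) separable SSMF problem whose conditioning is controlled. The two ingredients I would isolate are (i) a \emph{selection lemma} showing that a near-maximal-norm column must lie close to a single vertex, and (ii) a \emph{projection lemma} showing that projecting out the already-found vertices degrades neither the smallest singular value nor the maximal column norm of the remaining ones.

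For (i), the base case, I would use the identity, valid for any $h$ with $\sum_k h_k = 1$,
\[
\|Wh\|^2 = \sum_k h_k \|w_k\|^2 - \tfrac12 \sum_{k,l} h_k h_l \|w_k - w_l\|^2,
\]
together with $\|w_k\| \le K(W)$ and $\|w_k - w_l\| = \|W(e_k - e_l)\| \ge \sqrt2\,\sigma_r(W)$. Since the column $p$ selected by SPA satisfies $\|X(:,p)\| \ge \|X(:,j)\|$ for the separable data point $j$ sitting on the largest vertex, one gets $\|Wh_p\|^2 \ge K(W)^2 - \mathcal O(K(W)\varepsilon)$, and the identity forces $1 - \|h_p\|_2^2 = \sum_{k\ne l} h_{p,k} h_{p,l} \le \mathcal O\!\left(K(W)\varepsilon/\sigma_r^2(W)\right)$. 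A short computation then shows $h_p$ concentrates on one index $k^\ast$ with $1 - h_{p,k^\ast} \le \mathcal O\!\left(K(W)\varepsilon/\sigma_r^2(W)\right)$, whence $\|X(:,p) - w_{k^\ast}\| \le \|W(h_p - e_{k^\ast})\| + \varepsilon \le \mathcal O(\varepsilon\,\mathcal K^2(W))$, using $\|W(h_p - e_{k^\ast})\| \le 2K(W)(1 - h_{p,k^\ast})$. The substochastic constraint $H^\top e \le e$ I would handle by adjoining the origin as a virtual vertex $w_{r+1}=0$; since every $\|w_k\| \ge \sigma_r(W)$, the selected point stays far from the origin and the same bound survives with adjusted constants.

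For (ii), I would record the elementary fact that if $Q$ is the orthogonal projector onto the complement of $\mathrm{span}\{w_{k_1},\dots,w_{k_i}\}$, then for any unit combination $z$ of the remaining columns $\|Q W_{\mathrm{rem}} z\| = \mathrm{dist}(W_{\mathrm{rem}} z, \mathrm{span}\{w_{k_1},\dots,w_{k_i}\}) \ge \sigma_r(W)$, so the smallest singular value of the projected remaining vertices is still $\ge \sigma_r(W)$, while $\|Q w_k\| \le \|w_k\|$ keeps $K$ from growing; hence the reduced conditioning stays $\le \mathcal K(W)$. The work is that SPA projects onto the complement of the extracted \emph{noisy residual} columns rather than onto the true vertices. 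I would bound the operator-norm discrepancy between SPA's projector and the ideal $Q$ by the accumulated selection errors divided by $\sigma_r(W)$, absorb its action on the data into an \emph{effective noise} term, and check that the already-extracted vertices are sent to near-zero norm so they cannot be reselected; this lets me re-apply the selection lemma to the residual at step $i+1$.

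The main obstacle is precisely the accumulation in (ii): each substitution of a noisy extracted direction for a true vertex direction injects an error of order $(\text{current error}) \times \mathcal K(W)$ into the residual, so a naive triangle-inequality bound would compound a spurious factor of $\mathcal K$ at every one of the $r$ steps. The crux is to show that, under the hypothesis $\varepsilon \le \mathcal O\!\left(\sigma_r(W)/(\sqrt r\,\mathcal K^2(W))\right)$, the effective noise stays $\mathcal O(\varepsilon)$ and the reduced conditioning stays within a constant factor of $\mathcal K(W)$ across \emph{all} iterations, so that the per-step error never exceeds $\mathcal O(\varepsilon\,\mathcal K^2(W))$. Keeping the exponent of $\mathcal K$ equal to $2$ (rather than letting it grow with $i$) and exposing exactly where the $\sqrt r$ is spent is the delicate bookkeeping that the smallness-of-noise assumption is designed to make possible, and it is the step I would expect to demand the most care.
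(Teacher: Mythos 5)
This theorem is quoted from \cite{gillis2013fast}; the present paper does not reprove it, but records that the known proof applies the single-step result (Theorem~\ref{theo:single_step_SPA_original}) sequentially, with the already-extracted-and-projected vertices playing the role of the small-norm block $Q$. Your base case is sound and is in fact a clean route to the single-step bound: the identity $\|Wh\|^2=\sum_k h_k\|w_k\|^2-\tfrac12\sum_{k,l}h_kh_l\|w_k-w_l\|^2$, the lower bound $\|w_k-w_l\|\ge\sqrt2\,\sigma_r(W)$, and $\|h\|_2^2\le\max_k h_k$ together give $1-h_{p,k^\ast}\le\mathcal O(K(W)\varepsilon/\sigma_r^2(W))$ and hence a selection error of $\mathcal O(\varepsilon\,\mathcal K^2(W))$, matching Theorem~\ref{theo:single_step_SPA_original}.

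The gap is in the induction step, and it is not merely unfinished bookkeeping: the specific mechanism you propose is the one that fails. If you replace SPA's actual projector by the ideal projector $Q$ onto the complement of the true vertices and ``absorb its action on the data into an effective noise term,'' that discrepancy acts on columns of norm up to $K(W)$ and the projector perturbation is of order $\|x-w_{k^\ast}\|/\|w_{k^\ast}\|$, so the injected term has norm $\mathcal O(\varepsilon\,\mathcal K^2(W))$. Feeding that back as the noise level of the next selection step yields a step-two error of $\mathcal O(\varepsilon\,\mathcal K^4(W))$, and after $r$ steps the exponent has grown to $2r$; no smallness assumption on $\varepsilon$ rescues the \emph{exponent}. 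The argument that actually closes keeps the true projector $P$ applied to every term of $X=WH+N$: then the noise block is exactly $PN$ with $K(PN)\le K(N)=\varepsilon$ (it never grows), the extracted vertices become the small-norm columns $PW_{\mathrm{ext}}$ absorbed by the $K(Q)\le\mathcal O(\mathcal K\varepsilon)$ hypothesis of the single-step theorem, and the only quantity that must be re-established is a lower bound on $\sigma_{\min}(PW_{\mathrm{rem}})$ after projecting onto the complement of a \emph{noisy} direction. That last perturbation bound is precisely where the hypothesis $\varepsilon\le\mathcal O(\sigma_r(W)/(\sqrt r\,\mathcal K^2(W)))$ is spent, guaranteeing $\sigma_{\min}$ degrades only by a constant factor over all $r$ iterations. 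Without this reorganization --- or an equivalent argument showing why your ``effective noise'' stays at $\mathcal O(\varepsilon)$ rather than $\mathcal O(\varepsilon\,\mathcal K^2)$ --- the induction does not close, and this is the substantive content of the theorem rather than a routine verification.
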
 


Note that it is not required that $H$ is column stochastic, that is, $H^\top  e = e$, but only that $H^\top  e \leq e$; we will discuss this in Section~\ref{sec:previousw}.

\paragraph{Outline and contribution} In this paper, we revisit the robustness to noise of SPA and some of its variants, and provide a much more complete picture than that of Theorem~\ref{th:spanoise}.  
After summarizing the current knowledge about the robustness to noise of SPA in Section~\ref{sec:previousw}, our contributions are as follows. 
\begin{itemize}
    \item Section~\ref{sec:singlestep}. We reanalyze the selection step of SPA, showing that the error bound of SPA can be improved, namely in  $\mathcal{O}\left(\varepsilon  \mathcal K(W) \right)$ instead of $\mathcal{O}\left(\varepsilon  \mathcal K^2(W) \right)$ for the first step of SPA (Theorem~\ref{theo:single_step_SPA}).  We then show that for a rank-2 separable SSMF problem, SPA achieves the error bound in  $\mathcal{O}\left(\varepsilon  \mathcal K(W) \right)$     
with a bound on the noise given by $\varepsilon \leq \mathcal{O}\left(\frac{\sigma_r(W)}{ \mathcal K(W)} \right)$, a significant  improvement compared to SPA, of a factor $\mathcal K(W)$ for both terms (Theorem~\ref{theo:rank2_SPA_Q}).  

    \item Section~\ref{sec:T-SPA}.  For separable SSMF, the first projection step of SPA can be replaced by a translation, as done in~\cite{arora2013practical}, which we refer to as translated SPA (T-SPA). 
    Adapting the results from Section~\ref{sec:singlestep}, we show that the first two steps of T-SPA have an error bound in $\mathcal{O}\left(\varepsilon  \mathcal K(W) \right)$ (Theorem~\ref{theo:firsttwo_T-SPA}). Moreover, for a  rank-3 separable SSMF problem, T-SPA  achieves the error bound in $\mathcal{O}\left(\varepsilon  \mathcal K(W) \right)$   
with a bound on the noise given by $\varepsilon \leq \mathcal{O}\left(\frac{\sigma_r(W)}{ \mathcal K(W)} \right)$, the same improvement as for SPA in Section~\ref{sec:singlestep} but for $r=3$  (Theorem~\ref{theo:rank3_T-SPA}).  
    
    \item Section~\ref{sec:tighT-SPA}. For $r \geq 3$, we show that the bound of Theorem~\ref{th:spanoise} from~\cite{gillis2013fast} with error bound in $\mathcal{O}\left(\varepsilon  \mathcal K^2(W) \right)$ is tight (Theorem~\ref{theo:tightSPA}). We do this by providing a family of data matrices that achieve this worst-case bound already at the second step of SPA.  
    This answers an open question about the tightness of the SPA error bound of Theorem~\ref{th:spanoise}~\cite[p.~231]{gillis2020nonnegative}. 

    \item Section~\ref{sec:precond}. SPA can be preconditioned in various ways to obtain error bounds in $\mathcal{O}\left(\varepsilon  \mathcal K(W) \right)$~\cite{gillis2015semidefinite, gillis2015enhancing} (see Section~\ref{sec:draw2} for more details).     
    We show that these error bounds are tight for two important preconditioners: SPA itself~\cite{gillis2015enhancing} (Theorem~\ref{theo:tightSPA2}), and minimum-volume ellipsoid (MVIE)~\cite{gillis2015semidefinite} (Theorem~\ref{theo:tightMVESPA}).

    \item Section~\ref{sec:preprocess}. We discuss a way to preprocess the data, by translating and lifting the data points in a higher dimensional space of dimension $m+1$, 
    allowing us to improve robustness of SPA be reducing the conditioning of $W$. 

    \item Section~\ref{sec:numexp}. We provide numerical experiments to compare the various SPA variants and how they compare on synthetic data, allowing us to illustrate our theoretical findings. In particular, our experiments show that our proposed SPA variant outperforms the others in adversarial settings. 
\end{itemize}

\paragraph{Notation} 
Given a vector $x \in \mathbb{R}^m$, we denote $\|x\|$ its $\ell_2$ norm. 
Given a matrix 
$X \in \mathbb{R}^{m \times n}$, we denote $X^\top $ its transpose, 
$X(:,j)$ its $j$th column, 
$X(i,:)$ its $i$th row, 
$X(i,j)$ its entry as position $(i,j)$, 
$\|X\| = \sigma_{\max}(X)$ its $\ell_2$ norm which is equal to its largest singular value, 
$\|X\|_F^2 = \sum_{i,j} X(i,j)^2$ its squared Frobenius norm, 
$\sigma_r(X)$ its $r$th singular value,  
$\sigma_{\min}(X) = \sigma_{\min(m,n)}(X)$ its smallest singular value, $\rk(X)$ its rank, 
$K(X)$ $=$ \mbox{$\max_j \{\|X(:,j)\|\}$}, and 
$\mathcal K(X) = K(X)/\sigma_{\min}(X)$. 
For $k> \min\{m,n\}$, by convention,  $\sigma_k(X)=0$. 
Note that $$
 \sigma_{n} (X)\le   \min_i\{\|X(:,i)\|\}\le K(X)\le 
 \|X\|, \quad 
\sigma_{n}(X) \le \|P_{X(:,i)}(X(:,j))\| \le \|X(:,i)-X(:,j)\|, $$ 
and
$\sigma_{n}(X) \sqrt 2\le \|X(:,i)-X(:,j)\|$ for every $i\ne j$, where $P_u(v)$ is the projection of $v$ on the subspace orthogonal to $u$. 
We denote $e_k$ the $k$th unit vector, $e$ the vector of all ones of appropriate dimension, $I_r$ the identity matrix of dimension $r$. 
The set $\mathbb{R}^{m \times n}_+$ denotes the $m$-by-$n$ component-wise nonnegative matrices. A matrix $H  \in \mathbb{R}^{r \times n}$ has stochastic columns if $H \geq 0$ and $H^\top  e = e$. It is separable if $H(:,\revise{\mathcal{J}}) = I_r$ for some index set \revise{$\mathcal J$} of size $r$. 

Given $W \in \mathbb{R}^{m \times r}$, we may also denote its columns by $\{w_j\}_{j=1}^r$. 
The convex hull generated by the columns of $W$ is denoted $\conv(W) = \conv(w_1,w_2,\dots,w_r) = \{ Wh \ | \ e^\top  h = 1, h \geq 0\}$. 

The notation $A \succ 0$ means that $A$ is a square symmetric positive definite matrix.  

\revise{In the results, we use the notation $\dots  \le \mathcal O(f(\bf x))$  to indicate that a quantity is bounded by $f(\bf x)$ up to an absolute multiplicative constant $C$ that does not depend on any of the parameters or variables.  }

\section{SPA: what do we know?} \label{sec:previousw}

 As described in the previous section, SPA is fast and robust to noise. However, it has several drawbacks, including the following ones: 
 \begin{enumerate}

     \item It can only extract as many as $\rk(W)$ vertices. 
     For example, if the columns of $W$ live in a an $m$-dimensional space with $\rk(W) \leq m < r$, e.g., a triangle in the plane ($m=\rk(W)=2$, $r=3$), SPA can only extract $\rk(W)$ of the $r$ vertices.

     \item The error bound are rather poor, depending on the squared of the condition number of $W$. Moreover, it is not known whether this bound is tight~\cite[p.~231]{gillis2020nonnegative}.

     \item SPA is sensitive to outliers. 
     
 \end{enumerate}
    
Researchers have proposed many improvements and modifications of SPA to alleviate some of these drawbacks. 
Let us highlight some of these results; in particular the ones we will discuss in this paper. 

\subsection{Drawback 1: rank-deficient case}  \label{sec:draw1}

How can we adapt SPA to handle the case when $\rk(W) < r$? 
 Assuming that $H$ is column stochastic, that is, $H^\top  e = e$, 
         Arora et al.~\cite{arora2013practical} replace the first projection step of SPA by a translation step; see Algorithm~\ref{algo:T-SPA}. Note that Arora et al.~\cite{arora2013practical}  did not present their algorithm exactly in these terms (they worked with affine spaces) but Algorithm~\ref{algo:T-SPA} is equivalent. 
         \algsetup{indent=2em}
\begin{algorithm}[ht!]
\caption{Translated SPA (T-SPA)~\cite{arora2013practical}  \label{algo:T-SPA}}
\begin{algorithmic}[1] 
\REQUIRE The matrix $X = WH + N$ where $H \geq 0$, $H^\top  e = e$, $H$ is separable, $W$ is full column rank, 
the number~$r$ of columns to extract.  
 
\ENSURE Set of $r$ indices $\mathcal{J}$ 
such that ${X}(:,\mathcal{J}) \approx W$ (up to permutation). 

    \medskip  
		
\STATE Let $R = {X}$, $\mathcal{J} = \{\}$.  \vspace{0.1cm} 
\FOR {$k = 1, \dots r$}   \vspace{0.1cm}  
\STATE $p \in \argmax_j \| R(:,j) \|$.     
\hspace{1.65cm} \textit{\% Selection step} \vspace{0.1cm}  
\STATE $\mathcal{J} = \mathcal{J} \cup \{p\}$. \vspace{0.1cm} 
\IF{$k = 1$}
\STATE $R \leftarrow R - X(:,p) \ e^\top $. \hspace{1.35cm} \textit{\% Translation step} 
\ELSE 
\STATE $R \leftarrow \left(I-\frac{{R(:,p)} R(:,p)^\top }{\|{R(:,p)}\|^2}\right)R$. \hspace{0.45cm} \textit{\% Projection step} 
\ENDIF 
\vspace{0.1cm}    
\ENDFOR 
\end{algorithmic}  
\end{algorithm} 

\begin{remark}[Implementation of T-SPA] \label{rem:tspa}
As for SPA (see Remark~\ref{rem:spa}), 
    T-SPA can be implemented in $\mathcal{O}(r \nnz(X))$ operations (essentially, $r$ matrix-vector products). See \url{https://gitlab.com/ngillis/robustSPA} for a MATLAB implementation. 
\end{remark}

Replacing the projection step by a translation at the first iteration has two advantages: 
\begin{itemize}
    \item It allows T-SPA to extract $\rk(W) +1$ vertices. 
    The simplest examples are triangles in the plane, e.g., 
    \begin{equation*} 
      W = \left( \begin{array}{ccc}
   -1  & 1 & 0 \\
    -1 &  0 & 1
\end{array}
\right),    
    \end{equation*}
    for which SPA can only extract 2 out of the 3 vertices, while T-SPA can extract all of them.

    \item Arora et al.~\cite{arora2013practical} proved an error bound in $\mathcal{O}\left(\varepsilon  \mathcal K^2(W_t) \right)$ for T-SPA, where $W_t \in \mathbb{R}^{m \times (r-1)}$ is the translated version of $W$ where the zero column is discarded, that is,  
    \[
    W_t = [w_1-w_p,\dots,w_{p-1}-w_p,w_{p+1}-w_p,\dots,w_r-w_p],  
    \] 
    where $w_p$ is the first extracted column by T-SPA, that is, the column of $W$ with the largest $\ell_2$ norm. 
    Note that this bound essentially follows that of~\cite{gillis2013fast} after translation. 
Interestingly, and this has never been pointed out in the literature as far as we know, the matrix $W_t$ can be significantly better conditioned than $W$, even when $\rk(W) = r$. 
For example, let 
    \begin{equation*} 
      W = \left( \begin{array}{ccc}
   -1  & 1 & 0 \\
    -1 &  0 & 1 \\ 
    \delta & \delta & \delta 
\end{array}
\right) \text{ for } \delta \ll 1,     \text{ with } 
 W_t = \left( \begin{array}{ccc}
    2 & 1 \\
      1 & 2 \\ 
     0 & 0 
\end{array}
\right), 
    \end{equation*} 
where $\mathcal K(W) = \mathcal O(1/\delta)$ while $\mathcal K(W_t) = \sqrt{5}$.  

Note that we will always have $\mathcal K(W_t) \leq 2 \mathcal K(W)$. In fact, $K(W_t) \leq 2 K(W)$ since $\|w_i - w_j\| \leq \|w_i \| + \| w_j\|$,  and $\sigma_{r-1}(W_t) \geq \sigma_{r}(W)$ since $W_t$ is a rank-one correction of $W$ (by the interlacing singular value theorem).   
    \end{itemize}
    
Another possibility to extract $\rk(W) +1$ vertices when the affine hull of the columns of $W$ contains the origin is to replace $X$ by $\binom{X}{c e^\top }$ where $c > 0$ is a constant~\cite{mao2021estimating, jin2024mixed}. This amounts to lift the data points in one dimension higher, and leads to an equivalent SSMF problem: for $H^\top  e = e$, 
\begin{equation} \label{eq:lift}
X = WH \; \iff \; \binom{X}{c e^\top } = \binom{W}{c e^\top } H. 
\end{equation} 
This prevents the affine hull of $\binom{W}{c e^\top }$ to contain the origin. We will discuss in Section~\ref{sec:preprocess} how to perform this lifting (and combining it with a translation) in order to minimize the conditioning of $\binom{W}{c e^\top }$, and hence improve the performance of SPA.

The two methods described above can only extract the vertices of a simplex, that is, of an $(r-1)$-dimensional poltyope with $r$ vertices. 
In order to extract more vertices, 
one can take the nonnegativity of $H$ into account in the projection step, leading to the successive nonnegative projection algorithm (SNPA). SNPA can extract all the vertices of $\conv(W)$, regardless of the dimensions; see~\cite{gillis2014successive} for more details.
However, when $\rk(W)=r$, SPA and SNPA have the same error bounds, while SPA is significantly faster. 
Other approaches that allow to extract more than $\rk(W)+1$ vertices are based on convex formulations~\cite{Esser2012convex, elhamifar2012see, recht2012factoring, gillis2013robustness, gillis2014robust, mizutani2022refinement}. 
We will not discuss SNPA and the convex relaxations further in this paper, nor the tightness of their error bounds, which is a topic of further research.

\subsection{Drawback 2: sensitivity to noise} \label{sec:draw2}

A standard preprocessing step in the literature to improve the robustness to noise of SPA is to replace $X$ with its best rank-$r$ approximation using the truncated singular value decomposition (SVD); see, e.g., \cite{ma2014signal, jin2024improved}. 
It is also possible to precondition the input matrix to improve the performance of SPA in the presence of noise. 
The intuition behind preconditioning is to try to estimate the left inverse of $W$, $W^\dagger$, from the data, then multiply $X$ by $W^\dagger$, and apply SPA on $W^\dagger X = H + W^\dagger N$. In the preconditioned problem, $W$ becomes the identity which is perfectly conditioned. 
In this paper, we will focus on two preconditionings because they offer the most robust variants of SPA:   
\begin{itemize}
    \item Minimum-volume ellipsoid (MVE)~\cite{gillis2015semidefinite}: find the $r$-dimensional MVE centered at the origin, $\{ x \ | \ x^\top  A x \leq 1 \}$ where $A \succ 0$,  that contains all data point. The MVE can be computed via semidefinite programming. Then multiply $X$ by the left inverse of $A$.  
This allows one to reduce the bound on the noise in SPA  to $\varepsilon \leq \mathcal{O}\left(  \frac{ \sigma_r(W)}{ r \sqrt{r}} \right)$, with a factor of improvement of $\frac{\mathcal K^2(W)}{r}$, and reduce the error bound to $\mathcal{O}\left(  \varepsilon \mathcal K(W) \right)$, with a factor of improvement of $\mathcal K(W)$. This approach is referred to as MVE-SPA. Note that a similar approach was proposed in~\cite{mizutani2014ellipsoidal}. 

Note that MVE-SPA needs to work with an $r$-by-$n$ input matrix, and hence a dimensionality reduction preprocessing is necessary, such as the truncated SVD.

    \item SPA~\cite{gillis2015enhancing}: Estimate $W^\dagger$ by the left inverse of the SPA solution, $X(:,\mathcal{J})$. This strategy might appear a bit odd, preconditioning SPA with itself. It has the same bound on the noise as SPA, but allows to reduce the error bound to  $\mathcal{O}\left(  \varepsilon \mathcal K(W) \right)$, while providing significantly better results in numerical experiments~\cite{gillis2015enhancing} (see also  Section~\ref{sec:numexp}). We will denote SPA  preconditioned with SPA as SPA$^2$.  
\end{itemize}
We will show in Section~\ref{sec:precond} that the error bounds for these two preconditionings are also tight.

Another way to improve the error bound of SPA to $\mathcal{O}\left(  \varepsilon \mathcal K(W) \right)$ (but not the bound on the noise) was proposed in~\cite{arora2013practical}: after a first set of $r$ indices $\mathcal{J}$ is extracted by SPA, each element  $j \in \mathcal{J}$ is reviewed again and replaced with the index $p$ that maximizes the volume of $\conv(X(:,\mathcal{J} \backslash \{j\} \cup \{ p \})$.  
However, this algorithm, referred to as Fast Anchor Word (FAW) typically performs  worse than SPA$^2$~\cite{gillis2015enhancing}; see also Section~\ref{sec:numexp} for numerical experiments. Moreover, the computational cost of FAW is $r$ times that of SPA. Note that FAW uses a translation at the first step, that is, it relies on T-SPA. 

Another direction of research to improve robustness to noise is to consider randomized algorithms, such as vertex component analysis (VCA)~\cite{nascimento2005vertex} or randomized SPA~\cite{vuthanhrandSPA2022}. Such algorithms  generate a different solution at each run, from which one can pick the best one according to some criterion (e.g., the reconstruction error or the volume of $\conv(W)$). Discussing such algorithms is out of the scope of this paper. 



\subsection{Drawback 3: outliers} To improve performance in the presence of outliers, standard strategies include outlier removal, see, e.g., \cite{jin2024improved}, and robust low-rank approximations of the input matrix~\cite{candes2011robust}. 
Another dedicated strategy for SPA is to check, after the selection step, that the extracted column of $X$ can sufficiently reduce the error in the approximation (that is, $\min_{H} \| X - X(:,\mathcal J) H\|$ is sufficiently small)~\cite{gillis2019robustSPA}. 
A more recent strategy leverages the presence of multiple columns of $X$ close to that of $W$, allowing to estimate the columns of $W$ as the average or median of several columns of $X$; 
see~\cite{bhattacharyya2020finding, bakshilearning, nadisic2023smoothed}. 
We will not cover these approaches in this paper.

\section{Improved bounds for the first step of SPA and when $r=2$} \label{sec:singlestep}

In this section, we provide improved bounds for SPA: for the first step in Section~\ref{sec:firststepSPA} (Theorem~\ref{theo:single_step_SPA}), and when $r=2$ in Section~\ref{sec:SPAr2} (Theorem~\ref{theo:rank2_SPA_Q}). 

\revise{
\paragraph{Motivations} From a theoretical standpoint, understanding better the behavior of SPA, a workhorse separable NMF algorithm, is a mathematically interesting problem, even when $r=2,3$. 
From a practical standpoint, rank-2 NMF and/or SPA with $r=2$ have been used successfully to: 
 \begin{itemize}
     \item segment and analyze medical images~\cite{li2013hierarchical, sauwen2015hierarchical},  

    \item extract topics in large corpus of documents~\cite{kuang2013fast}, and 

     \item cluster pixels in hyperspectral images~\cite{gillis2014hierarchical}. 
     
 \end{itemize}

In analytical chemistry, where NMF is referred to as the self-modeling curve resolution (SMCR) problem, using $r=3$ is a standard setting~\cite{borgen1985extension, neymeyr2018set}. 
}

\subsection{First step of SPA} \label{sec:firststepSPA} 

The robustness proof of SPA in Theorem~\ref{th:spanoise} relies on using sequentially the following theorem that studies a single step of SPA (we report the bounds using the $\mathcal{O}$ notation to simplify the presentation\footnote{In~\cite{gillis2013fast}, authors consider any strongly convex and $L$-smooth function $f$ to be maximized, not only the $\ell_2$ norm. We focus in this paper on the $\ell_2$ norm which is the most used variant.}). 
\begin{theorem}\cite[Theorem~2]{gillis2013fast}
\label{theo:single_step_SPA_original}    
Let $W\in \f R^{m\times k}$, and $Q\in \f R^{m\times (r-k)}$ and $X = [W\,\, Q]H + N\in\f R^{m\times n}$ with $k\le r\le \min\{m,n\}$, where  $W$ is full rank, $K(N)< \varepsilon\le  \mathcal{O} \left( \frac{\sigma_k(W)}{  \mathcal K(W) } \right)$ and $H\in \f R_+^{r\times n}$ is separable with $H^\top  e \leq e$. 
If $K(Q) \le  \mathcal{O} \left( \mathcal K(W) \varepsilon \right)$, then the column of $X$ with the largest $\ell_2$ norm, denoted $x$, satisfies 
\[
\min_{1\leq i \leq r} \| x - w_i\| \le \mathcal{O} \left(\mathcal K^2(W)\varepsilon \right).
\]
\end{theorem}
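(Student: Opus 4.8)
The plan is to analyze what happens in a single selection step of SPA when the data matrix has the block structure $X = [W\,\,Q]H + N$, where $W$ carries the "good" vertices we want to identify and $Q$ represents directions whose contribution is controlled to be of order $\varepsilon\mathcal K(W)$. The key observation driving the proof is that since $H$ is separable, for each column $w_i$ of $W$ there is a data point $X(:,j)$ with $H(:,j) = e_i$ (a canonical basis vector), so $X(:,j) = w_i + N(:,j)$ and hence $\|X(:,j)\| \ge \|w_i\| - \varepsilon$. Taking the index $i^*$ achieving $\|w_{i^*}\| = K(W)$, this gives a lower bound on the largest column norm in $X$, namely $\max_j\|X(:,j)\| \ge K(W) - \varepsilon$. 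The selected column $x$ therefore satisfies $\|x\| \ge K(W) - \varepsilon$.

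**Decomposing the selected column.**

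The selected column $x = [W\,\,Q]H(:,p) + N(:,p)$ can be written as $x = Wh + Qg + \eta$, where $h \in \mathbb R^k_+$ and $g \in \mathbb R^{(r-k)}_+$ are the relevant subvectors of $H(:,p)$ with $h^\top e + g^\top e \le 1$, and $\|\eta\| < \varepsilon$. First I would show that the "$Q$ part plus noise," $Qg + \eta$, has norm $\mathcal O(\varepsilon\mathcal K(W))$, using the hypothesis $K(Q) \le \mathcal O(\varepsilon\mathcal K(W))$ together with $\|g\|_1 \le 1$. Thus $x$ is within $\mathcal O(\varepsilon\mathcal K(W))$ of the point $Wh \in \conv(0, w_1,\dots,w_k)$. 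The core of the argument is then to show that $h$ must be concentrated near a single vertex $e_i$: because $\|x\| \ge K(W) - \varepsilon$ is nearly maximal, and the only way a convex combination $Wh$ can attain nearly the maximal norm $K(W)$ is for $h$ to place almost all its weight on the vertex of largest norm. This is where the conditioning $\sigma_k(W)$ enters. Quantitatively, I would bound $1 - h_{i}$ (the deficit of weight at the dominant vertex) in terms of the norm gap, exploiting strong convexity of $\|\cdot\|^2$ relative to the geometry controlled by $\sigma_k(W)$ and $K(W)$.

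**Converting the weight concentration into a distance bound.**

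Once $h$ is shown to satisfy $1 - h_i \le \mathcal O(\varepsilon\mathcal K(W)/\sigma_k(W)) = \mathcal O(\varepsilon \mathcal K^2(W)/K(W))$, the distance $\|Wh - w_i\| = \|W(h - e_i)\|$ is bounded by $\|W\|\cdot\|h - e_i\|_1$, and $\|h - e_i\|_1 \le 2(1 - h_i)$. Combining with $\|W\| \le \sqrt k\, K(W)$ and the triangle inequality to pass from $Wh$ back to $x$, I expect to obtain $\min_i \|x - w_i\| \le \mathcal O(\mathcal K^2(W)\varepsilon)$, matching the claim. The constant accounting will need the bound-on-the-noise hypothesis $\varepsilon \le \mathcal O(\sigma_k(W)/\mathcal K(W))$ to guarantee the weight deficit is genuinely small (so that the selected vertex is unambiguous and the linearization is valid).

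**Main obstacle.**

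The hard part will be the weight-concentration step: turning the near-maximality of $\|Wh\|$ into a sharp linear bound on $1 - h_i$. The difficulty is that a convex combination can have large norm not only by weighting the largest vertex but potentially by combining several long, nearly-aligned vertices; ruling this out quantitatively requires carefully using the lower bound $\sigma_k(W)$ on the smallest singular value (which prevents the vertices from being too aligned) and the fact that the squared norm $\|Wh\|^2 = h^\top(W^\top W)h$ is a strongly convex quadratic on the simplex. Getting the right power of $\mathcal K(W)$ — and confirming it is genuinely $\mathcal K^2$ rather than $\mathcal K$ — hinges on this estimate, and is precisely the place where the later improvement to $\mathcal O(\varepsilon\mathcal K(W))$ for the first step (Theorem~\ref{theo:single_step_SPA}) must come from a more refined version of this same inequality.
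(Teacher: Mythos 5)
First, for context: the paper does not prove Theorem~\ref{theo:single_step_SPA_original} itself --- it is quoted from~\cite{gillis2013fast} --- and the paper's own single-step analysis (Theorem~\ref{theo:single_step_SPA}) establishes the strictly stronger bound $\mathcal O(\varepsilon \mathcal K(W))$ by a geometric covering argument (Lemmas~\ref{lem:median_point} and~\ref{lem:decomposition_convex}), explicitly contrasted with the ``matrix norm inequalities'' of the original reference. Your proposal follows that original strong-convexity route, and its architecture is sound: separability gives $\|x\|\ge K(W)-\varepsilon$; the selected column decomposes as $Wh+Qg+\eta$; near-maximality of the norm forces weight concentration; concentration converts to a distance bound.

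There is, however, a genuine quantitative gap in the concentration step. The identity $\|\sum_i \lambda_i v_i\|^2=\sum_i\lambda_i\|v_i\|^2-\sum_{i<j}\lambda_i\lambda_j\|v_i-v_j\|^2$, together with $\|w_i-w_j\|\ge \sqrt 2\,\sigma_k(W)$, turns a norm deficit $K(W)-\|Wh\|\le G$ into $h_{i^*}(1-h_{i^*})\le \mathcal O(G\,K(W)/\sigma_k(W)^2)$, i.e.\ $1-h_{i^*}\le \mathcal O(G\,\mathcal K(W)/\sigma_k(W))$. Your control of the $Q$ part uses only $\|g\|_1\le 1$, so it yields $G=\mathcal O(\varepsilon\mathcal K(W))$ (from $K(Q)\le \mathcal O(\varepsilon\mathcal K(W))$), and hence $1-h_{i^*}\le \mathcal O(\varepsilon\mathcal K^2(W)/\sigma_k(W))$ --- a factor $\mathcal K(W)$ weaker than the bound $\mathcal O(\varepsilon\mathcal K(W)/\sigma_k(W))$ you assert. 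Feeding the weaker bound into $\|W(h-e_{i^*})\|\le 2K(W)(1-h_{i^*})$ gives only $\mathcal O(\varepsilon\mathcal K^3(W))$, not the claimed $\mathcal O(\varepsilon\mathcal K^2(W))$; the danger you yourself flag (several long, nearly aligned vertices) is exactly where the loss occurs. Two repairs are available. (i)~Sharpen the $Q$ estimate: the total weight $s$ that $H(:,p)$ places off the columns of $W$ satisfies $s(K(W)-K(Q))\le 2\varepsilon$, because $\|x\|\le (1-s)K(W)+sK(Q)+\varepsilon$ while $\|x\|\ge K(W)-\varepsilon$; since $K(Q)\le K(W)/2$ under the noise bound, $s\le \mathcal O(\varepsilon/K(W))$, so $\|Qg\|\le sK(Q)=\mathcal O(\varepsilon)$ and the deficit is really $G=\mathcal O(\varepsilon)$, which restores your claimed concentration bound and the final $\mathcal O(\varepsilon\mathcal K^2(W))$. (ii)~Alternatively, do not decouple weights from distances: Cauchy--Schwarz gives $\sum_{j\ne i^*}h_j\|w_j-w_{i^*}\|\le \sqrt{1-h_{i^*}}\,\bigl(\sum_j h_j\|w_j-w_{i^*}\|^2\bigr)^{1/2}=\mathcal O(G\,\mathcal K(W))$, which yields $\mathcal O(\varepsilon\mathcal K^2(W))$ even with the crude $G=\mathcal O(\varepsilon\mathcal K(W))$. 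Either way, you also need to justify $h_{i^*}\ge 1/2$ (or at least $h_{i^*}\ge 1/k$, at the cost of a dimension factor the theorem's $\mathcal O(\cdot)$ does not allow), which is where the hypothesis $\varepsilon\le \mathcal O(\sigma_k(W)/\mathcal K(W))$ enters, much as in Lemma~\ref{lem:median_point}.
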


In this section, we improve the bound of Theorem~\ref{theo:single_step_SPA_original}; see 
Theorem~\ref{theo:single_step_SPA} below which is very similar to Theorem~\ref{theo:single_step_SPA_original}, but  improves the error bound, namely from   $\mathcal{O}\left(\varepsilon  \mathcal K^2(W) \right)$ to $\mathcal{O}\left(\varepsilon  \mathcal K(W) \right)$. 
Note that it does not improve \cite[Theorem 2]{gillis2013fast} in terms of the  bound on the noise allowed by SPA which was already of the order of  $\varepsilon \leq \mathcal{O}\left(\frac{\sigma_r(W)}{ \mathcal K(W)} \right)$.


\begin{theorem}
\label{theo:single_step_SPA}    Let $W\in \f R^{m\times k}$,   $Q\in \f R^{m\times (r-k)}$ and $X = [W\,\, Q]H + N\in\f R^{m\times n}$ with $k\le \min\{m,r\}$, where  $W$ is full rank, 
$K(N)< \varepsilon\le \sigma_k(W)\mathcal K(W)^{-1}/8$, $H\in \f R_+^{r\times n}$ is separable and has stochastic columns. 
If $K(Q) \le  K(W)/2$, 
then for the column of $X$ with largest $\ell_2$ norm, denoted $x = X(:,p) = [W, Q]h_p + N(:,p)$ for some $p$, there exists a column $w_i$ of $W$ such that 
$\| x -w_i\| \le 33\mathcal K(W)\varepsilon$ 
and $h_p(i) \ge 1/2$.

\end{theorem}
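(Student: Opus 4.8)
The plan is to analyze the selection step geometrically, tracking the weight vector $h_p$ associated with the extracted column $x = X(:,p)$ of maximal norm. First I would write $x = [W,Q]h_p + N(:,p)$, split $h_p$ into the block $\alpha \in \f R^k$ acting on $W$ and the block $\beta \in \f R^{r-k}$ acting on $Q$, so that $x = W\alpha + Q\beta + N(:,p)$ with $\alpha,\beta \ge 0$ and $e^\top \alpha + e^\top \beta = 1$ (using stochasticity). The goal is to show that the dominant entry of $\alpha$ is at least $1/2$ and that the corresponding vertex $w_i$ is close to $x$. The key maximality fact is that $\|x\| \ge \|X(:,j)\|$ for every separable column $j$; since $H$ contains $I_r$, in particular there is a column equal to $w_i + N(:,j)$ for each $i$, so $\|x\| \ge \max_i \|w_i\| - \varepsilon = K(W) - \varepsilon$.

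Next I would argue that $h_p$ must concentrate on a single vertex of $W$. The intuition is that $\|W\alpha + Q\beta\|$ is maximized (over the simplex, up to noise) near a vertex because of strong convexity of the squared $\ell_2$ norm. Concretely, I would use that for a convex combination the norm is bounded by the max of the norms of the constituent vertices, and that any genuinely mixed combination lies strictly inside $\conv([W,Q])$, so its norm is strictly smaller than $K(W)$; the gap is quantified by $\sigma_k(W)$ and the simplex geometry. Using the condition $K(Q) \le K(W)/2$, the $Q$-part cannot by itself produce a large-norm point, which forces $e^\top\beta$ to be small and $\alpha$ to carry most of the mass. I would then convert ``$\alpha$ carries most of the mass'' into ``$\alpha$ has one dominant entry $\ge 1/2$'': if the mass were split among two or more vertices, the resulting point would sit too far inside the simplex, contradicting $\|x\| \ge K(W) - \varepsilon$ once $\varepsilon \le \sigma_k(W)\mathcal K(W)^{-1}/8$. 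This is where the explicit constant $8$ in the noise bound gets spent.

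Once I know $h_p(i) \ge 1/2$ for some index $i$, I would bound $\|x - w_i\|$ directly. Write $x - w_i = W(\alpha - e_i) + Q\beta + N(:,p)$. The vector $\alpha - e_i$ has nonnegative entries off coordinate $i$ summing to $1 - \alpha_i - e^\top\beta$, and entry $\alpha_i - 1 \le 0$; since $\alpha_i \ge 1/2$ and $e^\top\beta$ is small, $\|\alpha - e_i\|_1$ is controlled by $2(1-\alpha_i)$. The crucial improvement over Theorem~\ref{theo:single_step_SPA_original}, yielding $\mathcal K(W)$ rather than $\mathcal K^2(W)$, is to bound $1 - \alpha_i$ not crudely but by relating the norm deficit $K(W) - \|x\| \le \varepsilon$ to how far $\alpha$ is from the vertex $e_i$. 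A strong-convexity estimate gives $\|x_{\text{clean}}\|^2 \le K(W)^2 - c\,\sigma_k(W)^2(1-\alpha_i)$ type inequalities, so $1 - \alpha_i = \mathcal O(\varepsilon\, K(W)/\sigma_k(W)^2)$, and multiplying by $\|W\| \le \sqrt{k}\,K(W)$ collapses one factor of $K(W)/\sigma_k(W)$ into a single $\mathcal K(W)$. Finally I would assemble $\|x - w_i\| \le \|W\|\,\|\alpha - e_i\| + K(Q)e^\top\beta + \varepsilon$ and verify the constant comes out to $33\mathcal K(W)\varepsilon$.

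The main obstacle I anticipate is the strong-convexity step: turning the maximality of $\|x\|$ into a \emph{quantitative} lower bound on the dominant weight $\alpha_i$ with the right dependence on $\sigma_k(W)$ and $K(W)$, carefully separated from the $Q$ and noise contributions, so that only one power of the conditioning $\mathcal K(W)$ survives. Getting the bookkeeping of the absolute constants consistent enough to land exactly at $33$ (and to justify that $\varepsilon \le \sigma_k(W)\mathcal K(W)^{-1}/8$ suffices) will require care, but the conceptual heart is the single sharpened convexity inequality that avoids the wasteful double application of the conditioning present in the original proof.
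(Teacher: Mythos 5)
Your overall strategy --- use maximality of $\|x\|$ plus separability to force the weight vector to concentrate on a single column of $W$, then quantify the distance to that vertex --- is the same as the paper's, and the first half (showing $h_p(i)\ge 1/2$ for some $i\le k$ by arguing that genuinely mixed combinations, and anything supported mostly on $Q$, have norm strictly below $K(W)-2\varepsilon$) is sound in spirit; the paper does exactly this via Lemma~\ref{lem:median_point}. The problem is in the step you yourself flag as the conceptual heart. From your strong-convexity estimate you get $1-\alpha_i=\mathcal O(\varepsilon K(W)/\sigma_k(W)^2)$, and you then assemble $\|x-w_i\|\le \|W\|\,\|\alpha-e_i\|+\dots$. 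But $\|W\|(1-\alpha_i)=\mathcal O\bigl(K(W)\cdot \varepsilon K(W)/\sigma_k(W)^2\bigr)=\mathcal O(\varepsilon\,\mathcal K(W)^2)$: multiplying by $\|W\|$ does not ``collapse'' a factor of $K(W)/\sigma_k(W)$, it introduces one. As written, your assembly reproduces the old $\mathcal K^2$ bound of Theorem~\ref{theo:single_step_SPA_original} rather than the improved $\mathcal K$ bound claimed in the statement.

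The missing idea is to never pass through the operator norm $\|W\|$. The displacement $x-w_i$ (up to noise) lives along the chords $m_j-w_i$ joining $w_i$ to the other columns of $[W\,\,Q]$: if $w=w_i+\sum_j\delta_j(m_j-w_i)$, the constraint $\|w\|\ge K(W)-2\varepsilon$ together with $w_i^\top(w_i-m_j)\ge\|w_i-m_j\|^2/2$ gives $\delta_j\|m_j-w_i\|^2\lesssim K(W)\varepsilon$, so each displacement has \emph{length} $\delta_j\|m_j-w_i\|\lesssim K(W)\varepsilon/\|m_j-w_i\|\lesssim K(W)\varepsilon/\sigma_k(W)=\mathcal K(W)\varepsilon$, because $\|m_j-w_i\|\gtrsim\sigma_k(W)$ for every admissible $j$ (for $W$-columns by the singular-value bound, for $Q$-columns by $K(Q)\le K(W)/2$). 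One factor of $\|m_j-w_i\|$ in the denominator of $\delta_j$ is cancelled by the length of the chord itself; that cancellation is exactly where the improvement from $\mathcal K^2$ to $\mathcal K$ comes from, and it is lost the moment you bound $\|W(\alpha-e_i)\|$ by $\|W\|\,\|\alpha-e_i\|$. The paper organizes this via Lemma~\ref{lem:decomposition_convex}: the region $\{\,y\in\conv([W\,\,Q])\ :\ \|y\|> K(W)-2\varepsilon,\ h_i\ge 1/2\,\}$ is contained in the convex hull of $w_i$ and points $\wt m_{ij}$ on those chords, each within $32\,\mathcal K(W)\varepsilon$ of $w_i$, which also handles the multi-direction bookkeeping your sketch leaves implicit.
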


Notice that under these hypotheses, $K(W)/2\ge \sigma_k(W)/2\ge 4\mathcal K(W)\ve$, so the upper bound on $K(Q)$ is consistent with the one in Theorem \ref{theo:single_step_SPA_original} and possibly less restrictive. 

To prove Theorem~\ref{theo:single_step_SPA}, we use a similar proof strategy as that of \cite[Theorem~1]{jin2024improved}, which is based on the geometry of the problem, as opposed to matrix norm inequalities used in~\cite{gillis2013fast}, as explained in~\cite{jin2024improved}.  
However,  \cite[Theorem~1]{jin2024improved} still  has a factor proportional to $\mathcal K^2(W)$ in the error bound, and hence its bound is asymptotically similar to that of Theorem~\ref{theo:single_step_SPA_original}. 
Moreover,    \cite[Theorem~1]{jin2024improved} claims that it can replace $\sigma_r(W)$ in Theorem~\ref{th:spanoise} by $\sigma_{r-1}(W)$ in the bound on the noise and in the error bound. However, this is not possible, as exemplified by the following counter example.  Suppose in fact that $W = \begin{pmatrix}
        0 & 0\\1&0
    \end{pmatrix}$, $r=2$ and $X = \begin{pmatrix}
        0& \ve & 0 \\ 1& 1 & 0
    \end{pmatrix}$ for a small enough $\ve \ge 0$. Notice that $X = WH + N$ for a separable and column stochastic $H = \begin{pmatrix}
        1 & 1 & 0 \\ 0 &0 & 1 
    \end{pmatrix}$ and with $K(N)= \ve$.  Then SPA identifies the first two columns of $X$ as the vertices of the simplex, and the error is at least $1$ for any $\ve$, that does not coincide with the estimation $\mathcal O(\ve K(W)^2/\sigma_{r-1}(W)^2 ) = \mathcal O(\ve )$. 

Before proving Theorem~\ref{theo:single_step_SPA}, let us state the following two lemmas. 
\begin{lemma}
\label{lem:median_point}      Let $W\in \f R^{m\times r}$ with $r\le m$. If $\varepsilon\le \sigma_r(W)\mathcal K(W)^{-1}/8$, then for every couple of distinct indices $i,j$   \[
    \left\| \frac{w_i+w_j}2 \right\|\le \max\{\|w_i\|,\|w_j\|\} -2\varepsilon. 
    \] 
\end{lemma}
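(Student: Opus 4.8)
The plan is to reduce the claimed inequality to a purely scalar statement involving the three quantities $\|w_i\|$, $\|w_j\|$ and $\|w_i-w_j\|$, and then invoke the lower bound on pairwise column distances recorded in the notation paragraph. First I would assume without loss of generality that $\|w_i\|\ge\|w_j\|$, so that $\max\{\|w_i\|,\|w_j\|\}=\|w_i\|=:a$, and set $b:=\|w_j\|$. The starting point is the polarization-type identity
\[
a^2-\left\|\frac{w_i+w_j}{2}\right\|^2=\frac{a^2-b^2}{2}+\frac{\|w_i-w_j\|^2}{4},
\]
obtained by expanding both sides through $\langle w_i,w_j\rangle$. Since $a\ge b$ the first term on the right is nonnegative, so the left-hand side is at least $\tfrac14\|w_i-w_j\|^2$.

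Next I would feed in the geometric input. From the inequalities listed in the notation paragraph, distinct columns satisfy $\|w_i-w_j\|\ge\sqrt2\,\sigma_r(W)$, hence $\tfrac14\|w_i-w_j\|^2\ge\tfrac12\sigma_r(W)^2$. Combined with the identity this yields $\big\|\tfrac{w_i+w_j}{2}\big\|^2\le a^2-\tfrac12\sigma_r(W)^2$.

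It then remains to convert this bound on the square into the desired bound $\big\|\tfrac{w_i+w_j}{2}\big\|\le a-2\varepsilon$. Because the same notation paragraph also gives $a=\|w_i\|\ge\sigma_r(W)$, both $a^2-\tfrac12\sigma_r(W)^2$ and $a-2\varepsilon$ are nonnegative (the latter since $\varepsilon\le\sigma_r(W)/8\le a/8$, using $\sigma_r(W)\le K(W)$ in the hypothesis), so squaring is an equivalence and the target reduces to $4\varepsilon(a-\varepsilon)\le\tfrac12\sigma_r(W)^2$. I would close this with the crude estimate $4\varepsilon(a-\varepsilon)\le 4\varepsilon a\le 4\varepsilon K(W)$ together with the hypothesis $\varepsilon\le\sigma_r(W)\mathcal K(W)^{-1}/8=\sigma_r(W)^2/(8K(W))$, which makes $4\varepsilon K(W)\le\tfrac12\sigma_r(W)^2$ exactly.

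The computation is short, so there is no serious obstacle; the only things requiring care are the bookkeeping that keeps every squared quantity nonnegative so that the final squaring step is an equivalence rather than a one-way implication, and checking that the constant $8$ in the noise hypothesis is precisely what is needed to absorb the factor $4K(W)$. The one genuinely non-mechanical step is the opening identity: it is what trades the midpoint norm for the pairwise distance $\|w_i-w_j\|$, the only quantity for which an a priori lower bound via $\sigma_r(W)$ is available.
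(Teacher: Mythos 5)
Your proof is correct and follows essentially the same route as the paper: both start from the parallelogram identity for the midpoint, lower-bound $\|w_i-w_j\|$ by $\sqrt{2}\,\sigma_r(W)$ (via interlacing of singular values), and then absorb the resulting $\tfrac12\sigma_r(W)^2$ gap using the hypothesis $\varepsilon\le\sigma_r(W)\mathcal K(W)^{-1}/8$. The only cosmetic difference is that the paper passes through the submatrix $W_{i,j}$ and a completing-the-square estimate for $\sqrt{K^2-\sigma^2/2}$, whereas you compare squares directly against $(a-2\varepsilon)^2$; both yield the stated constant.
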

\begin{proof}
  See Appendix~\ref{app:proofsLem1}. 
\end{proof}

\begin{lemma}
\label{lem:decomposition_convex}    Let $V$ be a convex polytope with vertices $\{v_0,v_1,\dots,v_r\}$. Let $C$ be a closed convex set containing $v_1,\dots,v_r$ but not $v_0$. Let $w_i = v_0 + \alpha_i(v_i-v_0)$ where $0<\alpha_i\le 1$ is the smallest value for which $w_i\in C$. Then the set $V\setminus C$ is contained in $\conv(v_0,w_1,\dots,w_r)$. 
\end{lemma}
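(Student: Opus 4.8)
The plan is to prove the equivalent inclusion $V\setminus C\subseteq S$, where $S:=\conv(v_0,w_1,\dots,w_r)$, by contraposition: I take a point $x\in V$ that does \emph{not} lie in $S$ and show $x\in C$. The geometric idea is that both the far face $F:=\conv(v_1,\dots,v_r)$ and its pulled-in copy $G:=\conv(w_1,\dots,w_r)$ are contained in $C$ — the former because $C$ is convex and contains $v_1,\dots,v_r$, the latter because each $w_i\in C$ by construction — and that the ray from $v_0$ through $x$ meets $G$ \emph{before} reaching $x$ and meets $F$ \emph{after} reaching $x$. Convexity of $C$ then sandwiches $x$ between two points of $C$. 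I would first record that $\alpha_i\in(0,1]$ is well defined: $v_i\in C$ gives $\alpha_i\le 1$, while $v_0\notin C$ together with $C$ closed gives $\alpha_i>0$ and that the minimizing value is attained.

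Writing $\rho(t)=v_0+t(x-v_0)$, I would proceed as follows. Since $V=\conv(\{v_0\}\cup F)$, I can write $x=(1-s)v_0+sf$ for some $f\in F$ and $s\in(0,1]$ (the case $s=0$ gives $x=v_0\in S$, which is excluded). Hence $\rho(T)=f\in F\subseteq C$ with $T:=1/s\ge 1$. Next I would use the key cone identity: because $w_i-v_0=\alpha_i(v_i-v_0)$ with $\alpha_i>0$, the cones $\{\sum_i c_i(w_i-v_0)\colon c_i\ge 0\}$ and $\{\sum_i c_i(v_i-v_0)\colon c_i\ge 0\}$ coincide. As $x-v_0$ lies in the latter (since $x\in V$), it admits a representation $x-v_0=\sum_i c_i(w_i-v_0)$ with $c_i\ge 0$; setting $\sigma=\sum_i c_i$ gives $\rho(1/\sigma)=\sum_i(c_i/\sigma)w_i\in G\subseteq C$. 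The hypothesis $x\notin S$ forces $\sigma>1$, for otherwise $x=(1-\sigma)v_0+\sum_i c_i w_i$ would exhibit $x$ as a convex combination of $v_0,w_1,\dots,w_r$. Thus $\tau:=1/\sigma\in(0,1)$ and $\rho(\tau)\in C$. Since $\tau<1\le T$ and both $\rho(\tau),\rho(T)\in C$, convexity gives $x=\rho(1)\in C$, completing the contrapositive.

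The hard part will be the middle step — showing the ray reaches $G$ at a parameter strictly below $1$ — because a priori the ray from the vertex $v_0$ could leave $S$ through one of the facets incident to $v_0$ rather than through the far face $G$. The cone identity $\mathrm{cone}_{v_0}(w_1,\dots,w_r)=\mathrm{cone}_{v_0}(v_1,\dots,v_r)$ is exactly what rules this out: it guarantees that $x-v_0$ points into the cone spanned by the $w_i-v_0$, so the ray does cross $G$, and the clean characterization $x\in S\iff\sigma\le 1$ pins the crossing parameter to $(0,1)$. I note that no step requires $V$ to be a simplex, so the argument covers the general polytope as stated; the representation of $x-v_0$ in the cone need not be unique, but any choice suffices since $x\notin S$ makes every such $\sigma$ exceed $1$.
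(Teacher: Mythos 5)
Your proof is correct, and it reaches the conclusion by a route that differs from the paper's in its key step. Both arguments start from the same decomposition $x-v_0=\sum_{i\ge1}\lambda_i(v_i-v_0)=\sum_{i\ge1}(\lambda_i/\alpha_i)(w_i-v_0)$ and both hinge on the dichotomy $\sigma:=\sum_i\lambda_i/\alpha_i\le 1$ (which puts $x$ in $\conv(v_0,w_1,\dots,w_r)$) versus $\sigma>1$. Where you diverge is in handling the case $\sigma>1$: the paper proves the stronger set identity $V=\conv(v_1,\dots,v_r,w_1,\dots,w_r)\cup\conv(v_0,w_1,\dots,w_r)$ by explicitly solving $\sum_i\lambda_i/(1+t(\alpha_i-1))=1$ for a reweighting parameter $t\in[0,1)$ and exhibiting $x$ as a convex combination of all the $v_i$'s and $w_i$'s at once; you instead argue by contraposition along the ray $\rho(t)=v_0+t(x-v_0)$, sandwiching $x=\rho(1)$ between $\rho(1/\sigma)\in\conv(w_1,\dots,w_r)\subseteq C$ and $\rho(1/s)\in\conv(v_1,\dots,v_r)\subseteq C$ and invoking convexity of $C$ directly. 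Your one-dimensional sandwich is more elementary — it sidesteps the existence argument for $t$ (which the paper asserts without spelling out the continuity/monotonicity justification) and only uses two points of $C$ rather than a full $2r$-term combination — at the cost of proving slightly less (the containment $V\setminus C\subseteq S$ rather than the explicit two-piece cover of $V$). Your side remarks are also well taken: the attainment and positivity of $\alpha_i$ do require closedness of $C$ and $v_0\notin C$, and the possible non-uniqueness of the conic representation for a non-simplicial $V$ is harmless since every representation yields $\sigma>1$ when $x\notin S$.
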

\begin{proof}
See Appendix~\ref{app:proofsLem2}. 
\end{proof}

We can now prove Theorem~\ref{theo:single_step_SPA}. 
{
}

\begin{proof}[Proof of Theorem~\ref{theo:single_step_SPA}]
The vector $x$ is equal to $w + s$ where $w=[W\,\, Q]h$ and  $h,s$ are columns of $H$ and $N$ respectively, so $\|x\| < \|w\| +\varepsilon$. At the same time, given $\|w_i\| = K(W)$ the longest column of $W$, there exists a column of $X$ that is equal to $w_i$ up to a perturbation of norm at most $\varepsilon$, due to the separability of the matrix $H$, so $\|x\|> K(W) -\varepsilon$. It implies that there exists a vector $w$ in the convex hull of $[W\,\, Q]$ for which 
\[
w\in \mc S := \{ y\in \conv([W\,\, Q]) \,\,|\,\, \|y\| > K(W)-2\varepsilon \}. 
\]
Since
\[
K(W) - 4\ve  \ge K(W) - \sigma_k(W)\mathcal K(W)^{-1}/2\ge  K(W)/2 \ge K(Q), 
\]
then all vectors in $\conv(Q)$ have norm at most $K(W) -4\varepsilon$, and $\|w_i + q_j\|/2\le K(W)-2\varepsilon$.
Call $M := [W\,\, Q]$ so that $m_i = w_i$ for $i\le k$ and $m_i = q_{i-k}$ for $i>k$.  
By the hypothesis on $\varepsilon$ and Lemma \ref{lem:median_point}, 
all the vectors in the polytope $\{y=[W\,\, Q]h \,\, | \,\, e^\top h = 1, \, h\ge 0, \|h\|_{\infty}\le 1/2 \}$ have norm at most $K(W) -2\varepsilon$, since its vertices are the median points between different columns of $[W\,\, Q]$. As a consequence,
\[
\mc S = \bigcup_{i} \Big( \mc S\cap \{ y=[W\,\, Q]h \,\, | \,\, e^\top h = 1, \, h\ge 0, h_i\ge  1/2  \}\Big) := \bigcup_{i} \Big( \mc S \cap \mc V_i \Big) := \cup_{i\le k} \mc S_i
\]
where the union is disjoint. Notice that the $\mc S_i$ associated with $\|m_i \|\le K(W) - 2\varepsilon$  are empty since the vertices of $\mc V_i$ are the median points between $m_i$  and the columns of $M$ (itself included). In particular $\mc S_i = \emptyset$ for $i> k$ since  $K(Q) \le K(W) - 4\varepsilon$.
Let now  $\|w_i\|>K(W) - 2\varepsilon$.  Since the ball with radius $K(W)-2\varepsilon$ is convex, closed and contains $(w_i+m_j)/2$ for all $j\ne i$, but not $w_i$,  we can find points $\wt m_{ij}$ of norm exactly $K(W) - 2\varepsilon$ on the segments connecting $w_i$ with $(w_i+m_j)/2$. By convention, denote $\wt m_{ii}:= w_i$. Since $\mc S_i = \mc S \cap \mc V_i = (\conv(M)\setminus B(K(W)-2\varepsilon) )\cap \mc V_i = \mc V_i\setminus B(K(W)-2\varepsilon)$, we can use Lemma \ref{lem:decomposition_convex} and find that $\mc S_i\cu \conv\{\wt m_{i1},\dots,\wt m_{ir}\}$.  The point $w$ is in one of the non empty $\mc S_i$, so
\begin{equation}\label{eq:max_err}
    \min_i \| x - w_i\|= \min_i \|w + s - w_i\|\le \varepsilon + \max_{i:\mc S_i\ne \emptyset}\max_{y\in \mc S_i}\|y -w_i\|
\le \varepsilon + \max_{i:\mc S_i\ne \emptyset}\max_{j\ne i}\|\wt m_{ij} -w_i\|.
\end{equation}
Let $\delta_{ij}\in (0,1/2]$ be the value such that $\wt m_{ij} = w_i + \delta_{ij}(m_j-w_i)$ for $j\ne i$, and notice that $\|\wt m_{ij}-w_i\| = \delta_{ij}\|m_j-w_i\|$. From $\|\wt m_{ij}\| = K(W) - 2\varepsilon$, we obtain the following  equation in $\delta_{ij}$ 
\begin{equation}
    \label{eq:deltaij}(K(W)-2\varepsilon)^2 =    \|w_i\|^2 + \delta_{ij}^2 \|m_j-w_i\|^2 +2\delta_{ij} w_i^\top (m_j-w_i).
\end{equation}
If $\|m_j\|\le K(W)-2\varepsilon$, then $\delta_{ij}$ will be the only solution between $[0,1]$. Instead, if  $\|m_j\|> K(W)-2\varepsilon$, then the two solutions of the above equation will be $\delta_{ij}$ and $1-\delta_{ji}$, where $\delta_{ij}\le \frac 12 \le 1-\delta_{ji}$. In this case, if we further assume $\|w_i\| \ge \|m_j\|$, we get 
\[
\delta_{ij} + 1 -\delta_{ji} = \frac{2w_i^\top (w_i-m_j)}{\|m_j-w_i\|^2} = \frac{2\|w_i\|^2 - 2w_i^\top m_j}{\|m_j-w_i\|^2} \ge \frac{\|w_i\|^2+\|m_j\|^2 - 2w_i^\top m_j}{\|m_j-w_i\|^2} = 1,
\]
so $\delta_{ij}\ge \delta_{ji}$ and
\[
\|\wt m_{ij}-w_i\| = \delta_{ij}\|m_j-w_i\| \ge \delta_{ji}\|m_j-w_i\| = \|\wt m_{ji}-m_j\|.
\]
Thus, to estimate \eqref{eq:max_err}, we just need to consider the indices $i,j$ such that $\|w_i\| \ge \max\{\|m_j\|,  K(W)-2\varepsilon\}$.  For such couples of indices, $\delta_{ij}$ will always be the smallest solution of \eqref{eq:deltaij}, 
and $w_i^\top (w_i-m_j) \ge \|m_j-w_i\|^2/2>0$. From the inequality $a-\sqrt{a^2-x}\le x/a$ that holds for any $0\le x\le a^2$ and $0<a $, 
\begin{align*} \delta_{ij} &= \frac{
w_i^\top (w_i-m_j) - \sqrt{
(w_i^\top (w_i-m_j))^2 - \|m_j-w_i\|^2 (\|w_i\|^2 - (K(W)-2\varepsilon)^2)
} 
}{ \|m_j-w_i\|^2}\le \frac{
  \|w_i\|^2 - (K(W)-2\varepsilon)^2
}{  w_i^\top (w_i-m_j)}
\\&\le  
\frac{
2(  \|w_i\| - (K(W)-2\varepsilon))
(  \|w_i\|+ (K(W)-2\varepsilon))
}{ \|m_j-w_i\|^2}
\le
\frac{
 8 \|w_i\|\varepsilon
}{  \|m_j-w_i\|^2}
\le
\frac{
 8 K(W)\varepsilon
}{\|m_j-w_i\|^2}. 
\end{align*}
This allows us conclude that 
\begin{equation}
    \label{eq:local_error}
\|\wt m_{ij}-w_i\| = \delta_{ij}\|m_j-w_i\|
\le
\frac{
 8 K(W)\varepsilon
}{\|m_j-w_i\|} \le 
\frac{
 32 K(W)\varepsilon
}{\sigma_k(W)}, 
\end{equation}
because if $m_j = w_j$ then $\|m_j-w_i\| \ge \sqrt 2\sigma_k(W)\ge \sigma_k(W)/4$  and if $m_j = q_s$ then 
\[
\|m_j-w_i\| \ge  \|w_i\| - \|m_j\| \ge K(W) - 2\varepsilon -K(W)/2
\ge K(W)/2  - \sigma_k(W)\mathcal K(W)^{-1}/4
\ge \sigma_k(W)/4. 
\]
Finally,  by \eqref{eq:max_err},
\[
 \min_i \| x - w_i\|\le  \varepsilon + \max_{i:\mc S_i\ne \emptyset}\max_{j\ne i}\|\wt m_{ij} -w_i\|\le \varepsilon +\frac{
 32 K(W)\varepsilon
}{\sigma_k(W)}\le 33\mathcal K(W)\varepsilon.
\]
\end{proof}

The consequences of Theorem~\ref{theo:single_step_SPA} are the following.  
\begin{itemize}
    \item The first step of SPA is more robust than the next ones. The reason is that the projection step might lead to an increase of the error bound by a factor of $\mathcal K(W)$, so that Theorem~\ref{th:spanoise} becomes tight after the first step of SPA. We will prove this in Section~\ref{sec:tighT-SPA} with an example.  \revise{From a practical point, this is an interesting information: we know that the first extracted vertex by SPA is more reliable than the next ones.} 

    \item When $H^\top  e = e$, one can use T-SPA. Since T-SPA only translates at the first step while increasing the conditioning by a a factor at most two (see Section~\ref{sec:draw1}), the first two steps of T-SPA follow the bound of Theorem~\ref{theo:single_step_SPA} and are therefore more robust than the next ones.


\item In the proof of Theorem~\ref{theo:single_step_SPA}, the assumption that $H$ is separable can be relaxed: we only need the columns of $W$ to appear as columns of $X$ (up to the noise), not  that of $Q$. In other words, only $H(1:k,:)$ needs to be separable. 
We will use this observation in the proof of the  Theorem~\ref{theo:rank2_SPA_Q} in the next section.

    \item Since the only hypothesis on $Q$ is a bound on the norm of its columns, Theorem \ref{theo:single_step_SPA} generalizes easily to substochastic matrices $H$ (that is, $H^\top e \leq e$). In fact, we only need to add a zero column to $Q$ and a row to the substochastic $H$ to extend it to a stochastic matrix. 
     
    
\end{itemize}

 \subsection{SPA when $r=2$} \label{sec:SPAr2} 

Another less direct consequence of Theorem~\ref{theo:single_step_SPA} is that 
we can solve rank-2 separable NMF with better guarantees, with a bound on the noise of $\varepsilon \leq \mathcal{O}\left(\frac{\sigma_r(W)}{ \mathcal K(W)} \right)$, and 
an error bound in $\mathcal{O}\left(\varepsilon  \mathcal K(W) \right)$. 

Unfortunately, we show in the next section that the following steps of SPA when $r\ge 3$ cannot be as good and that the bound of Theorem~\ref{th:spanoise} is asymptotically tight.  


{

}

\begin{theorem}
    \label{theo:rank2_SPA_Q}    Let $W\in \f R^{m\times 2}$, $Q\in \f R^{m\times (r-2)}$ and $X = [W\,\, Q]H + N\in\f R^{m\times n}$ with $2\le m$, where  $W$ is full rank, 
    $K(N)< \varepsilon\le \ma O(\sigma_2(W)\mathcal K(W)^{-1})$, 
    $K(Q) \le \sigma_2(W)/2$ 
    and $H\in \f R_+^{r\times n}$ has stochastic columns and two of its columns are $e_1, e_2$.
Suppose $x_1, x_2$ are the two columns chosen in order by the SPA algorithm applied on $X$. Then we have that  
\[
\min_{\pi } \max_{i=1,2} \| x_i - w_{\pi(i)}\|  \le \ma O(\mathcal K(W)\varepsilon), 
\]
where $\pi$ is either the identity function or the transposition $\pi(i)= 3-i$.
\end{theorem}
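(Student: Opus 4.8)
The plan is to run the single-step analysis of Theorem~\ref{theo:single_step_SPA} twice, once for each of the two selection steps of SPA, and then to convert the guarantee obtained on the \emph{projected} second vertex into a guarantee on the original column $x_2$. After relabelling the columns of $W$, I may assume that the vertex identified at the first step is $w_1$, so that the permutation $\pi$ will be the identity. For the first step I would apply Theorem~\ref{theo:single_step_SPA} with $k=2$, treating $W$ as the good block and $Q$ as the extra block: the hypotheses hold because $K(Q)\le \sigma_2(W)/2\le K(W)/2$, the noise bound is met for a small enough absolute constant, and by the relaxed-separability remark following that theorem only the two rows of $H$ indexed by $W$ need contain an identity, which is granted by the columns $e_1,e_2$. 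This yields $\|x_1-w_1\|\le 33\mathcal K(W)\varepsilon$ with $h_{p_1}(1)\ge 1/2$; since $\|x_1\|$ is maximal, $\|w_1\|\ge K(W)-\mathcal O(\varepsilon)$, so $w_1$ is essentially the longest column.

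For the second step, let $P=I-x_1x_1^\top/\|x_1\|^2$ be the SPA projector. Since $Px_1=0$, one has $\|Pw_1\|=\|P(w_1-x_1)\|\le 33\mathcal K(W)\varepsilon$, so $Pw_2$ is the only surviving large direction among the projected vertices. To lower-bound $\|Pw_2\|$ I would compare $P$ with the projector $P_{w_1}$ onto $w_1^\perp$: the directions of $x_1$ and $w_1$ differ by $\mathcal O(\varepsilon/\sigma_2(W))$, hence $\|P-P_{w_1}\|\le \mathcal O(\varepsilon/\sigma_2(W))$ and $\|Pw_2\|\ge \|P_{w_1}w_2\|-\mathcal O(\mathcal K(W)\varepsilon)\ge \sigma_2(W)-\mathcal O(\mathcal K(W)\varepsilon)$, which is of order $\sigma_2(W)$ for a small enough noise constant. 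I then rerun the single-step analysis with $k=1$ on $PX=[\,Pw_2\;\;[Pw_1\;\;PQ]\,]H+PN$: the good block is the single column $Pw_2$ (so $\mathcal K=1$), the extra block is $[Pw_1,PQ]$, and $K(PN)\le\varepsilon$. The delicate point is the hypothesis on the extra block: the convenient sufficient condition $K([Pw_1,PQ])\le\|Pw_2\|/2$ is only borderline, as $K(PQ)$ may reach $\sigma_2(W)/2$ while $\|Pw_2\|$ may fall slightly below $\sigma_2(W)$; however the conditions actually used inside the proof of Theorem~\ref{theo:single_step_SPA} (that no extra column is selected and that $\|Pw_2-Pq_s\|\gtrsim\sigma_2(W)$) do hold for a sufficiently small absolute constant in the noise bound. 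This gives $\|Px_2-Pw_2\|\le\mathcal O(\varepsilon)$ and $h_{p_2}(2)\ge 1/2$, where $x_2=X(:,p_2)$.

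The crux is then to pass from this projected bound to the ambient bound. Writing $a=h_{p_2}(1)$, $b_s=h_{p_2}(2+s)$ and $c=\sum_s b_s$, stochasticity of the column gives
\[
x_2-w_2=a(w_1-w_2)+\sum_s b_s(q_s-w_2)+N(:,p_2),
\]
so $\|x_2-w_2\|\le \mathcal O\big((a+c)K(W)+\varepsilon\big)$ and it suffices to prove $a+c=\mathcal O(\varepsilon/\sigma_2(W))$. The naive estimate $h_{p_2}(2)\ge 1/2$ only gives $a+c\le 1/2$, so instead I would test the projected identity
\[
Px_2-Pw_2=a(Pw_1-Pw_2)+\sum_s b_s(Pq_s-Pw_2)+PN(:,p_2)
\]
against $Pw_2$. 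Every direction on the right has a strictly negative inner product with $Pw_2$: indeed $\langle Pw_1-Pw_2,Pw_2\rangle\le-\|Pw_2\|(\|Pw_2\|-\|Pw_1\|)$ and $\langle Pq_s-Pw_2,Pw_2\rangle\le-\|Pw_2\|(\|Pw_2\|-\|Pq_s\|)$, both of order $-\sigma_2(W)\|Pw_2\|$ since $\|Pw_1\|$ is negligible and $\|Pq_s\|\le\sigma_2(W)/2$. Hence there is no cancellation, and the left-hand side, of size $\mathcal O(\varepsilon\|Pw_2\|)$, forces $(a+c)\,\sigma_2(W)\|Pw_2\|=\mathcal O(\varepsilon\|Pw_2\|)$, i.e.\ $a+c=\mathcal O(\varepsilon/\sigma_2(W))$. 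Combining, $\|x_2-w_2\|=\mathcal O(\mathcal K(W)\varepsilon)$, which is the claim.

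I expect this last step to be the main obstacle: obtaining the \emph{linear} dependence on $\mathcal K(W)$ (rather than $\mathcal K^2(W)$) hinges on exploiting that all deviation directions point away from $Pw_2$, so that a single inner product controls $a+c$ without the potential cancellation between the $Q$-contributions and the $w_1$-contribution that a crude norm bound would permit. A secondary technical nuisance is re-verifying the second application of Theorem~\ref{theo:single_step_SPA}, whose convenient hypothesis on the extra block holds only up to the small slack created by the projection pushing $\|Pw_2\|$ slightly below $\sigma_2(W)$.
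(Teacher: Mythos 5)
Your proposal is correct and follows the same overall architecture as the paper's proof: apply Theorem~\ref{theo:single_step_SPA} to the first step, control the perturbation of the projector $P_{x_1}$ versus $P_{w_1}$ to show $\|P_{x_1}w_2\|\gtrsim\sigma_2(W)$ while $\|P_{x_1}w_1\|$ and $K(P_{x_1}Q)$ stay well below it, and then reduce everything to showing that the total weight $\lambda=a+c$ that the selected column places on $w_1$ and the columns of $Q$ is $\mathcal O(\varepsilon/\sigma_2(W))$, so that linearity of the projection gives $\|x_2-w_2\|\le\varepsilon+\lambda\,\mathcal O(K(W))=\mathcal O(\mathcal K(W)\varepsilon)$. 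You correctly identify this coefficient bound as the crux. The only genuine divergence is the mechanism used there: the paper does not re-invoke the single-step theorem at all, but instead writes $\wt x_2=\lambda\wt w+(1-\lambda)\wt w_2+\wt n_2$ with $\wt w\in\conv([\wt Q\;\wt w_1])$ and compares norms directly — since $\|\wt x_2\|\ge\|\wt w_2\|-\varepsilon$ (the column $\wt w_2+\wt n$ is available) and $\|\wt x_2\|\le\lambda\|\wt w\|+(1-\lambda)\|\wt w_2\|+\varepsilon$, the triangle inequality alone yields $\lambda(\|\wt w_2\|-\|\wt w\|)\le 2\varepsilon$, and the gap $\|\wt w_2\|-\|\wt w\|=\Omega(\sigma_2(W))$ finishes it. Your inner-product test against $Pw_2$ exploits exactly the same gap and is equally valid, but it forces you to first establish $\|Px_2-Pw_2\|=\mathcal O(\varepsilon)$ via a second application of Theorem~\ref{theo:single_step_SPA} with $k=1$, which in turn creates the borderline hypothesis $K([Pw_1,PQ])\le\|Pw_2\|/2$ that you then have to patch by going inside that theorem's proof. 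The paper's norm comparison sidesteps both complications, which is what it buys; your version buys nothing extra here, though the no-cancellation observation behind it is sound.
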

\begin{proof}
   Suppose $\ve \le \sigma_2(W)\mathcal K(W)^{-1}/305$. Since $1/305\le 1/8$,
    we can apply Theorem \ref{theo:single_step_SPA} and find that, up to a permutation of the columns of $W$ and $H$, SPA chooses at the first step the column $x_1$ and $\|r_1\| \le 33\mathcal K(W)\ve$ where $r_1:= x_1-w_1$. Notice that since $x_1$ is the column of $X$ with the largest norm and $H$  has among its columns the vectors $e_1, e_2$, then $w_i + n_i$ are columns of $X$ and in particular $$\|x_1\| \ge \max_i \|w_i+n_i\| \ge \max_i \|w_i\| - \varepsilon = K(W) -\ve.$$
Let $P_v:=I-vv^\top /\|v\|^2$ be the orthogonal projection to the hyperplane orthogonal to the vector $v$, and let $c = 1/305$. 

\begin{align*}
    \|P_{x_1}-P_{w_1}\|  &= \left\| \frac{x_1x_1^\top }{\|x_1\|^2} -  \frac{w_1w_1^\top }{\|w_1\|^2}\right\| = 
    \left\| \frac{(r_1+w_1)(r_1+w_1)^\top }{\|x_1\|^2} -  \frac{w_1w_1^\top }{\|w_1\|^2}\right\| \\
    & = 
    \left\| \frac{r_1r_1^\top  + r_1w_1^\top  + w_1r_1^\top }{\|x_1\|^2} + w_1w_1^\top \left( \frac{1}{\|x_1\|^2}-  \frac{1}{\|w_1\|^2}\right)\right\|\\
     & \le 
 \frac{\|r_1\|^2 + 2\|r_1\|\|w_1\| }{\|x_1\|^2} +  \frac{|\|w_1\|^2- \|x_1\|^2|}{\|x_1\|^2}
  \\    & \le 
 \frac{\|r_1\|^2 + 2\|r_1\|\|w_1\| }{\|x_1\|^2} +  \frac{2\|w_1\| \|r_1\| + \|r_1\|^2}{\|x_1\|^2}
 = 2 \|r\|\frac{\|r_1\|+ 2\|w_1\| }{\|x_1\|^2}\\
 & \le 2 \|r_1\|\frac{33\mathcal K(W)\ve+ 2K(W)}{(K(W) - \ve)^2}
 \le 2 \|r_1\|\frac{33c\sigma_2(W)+ 2K(W)}{(K(W) - c\sigma_2(W))^2}\\
 &\le \frac {2 \|r_1\|}{K(W)}\frac{33c+ 2}{(1 - c)^2} \le  \frac { \mathcal K(W)\ve}{K(W)}\gamma(c), 
    \end{align*}
    where $\gamma(c) = 66\frac{33c+ 2}{(1 - c)^2}\le 141$.  
    As a consequence,    \begin{align*}
        \|P_{x_1}w_2\| &\ge  \|P_{w_1}w_2\| - \|(P_{x_1}-P_{w_1})w_2\|
        \ge \sigma_2(W) - \|P_{x_1}-P_{w_1}\|\|w_2\| \\
        &\ge \sigma_2(W) - \frac { \mathcal K(W)\ve}{K(W)}\gamma(c)\|w_2\| 
        \ge (1-c\gamma(c))\sigma_2(W) 
        \ge\frac {1-c\gamma(c)}c\mathcal K(W)\ve .
    \end{align*}
If we now let $\wt X := P_{x_1}X = P_{x_1}[W\,\, Q]H + P_{x_1}N =: [\wt W\,\, \wt Q]H + \wt N$, then the second step SPA identifies $\wt x_2,$ the column of  $\wt X$ with the largest norm.  
Notice that $\wt w_2 =P_{x_1}w_2 $ and
\begin{align*}
    \|\wt w_1\| = \|P_{x_1}w_1&\| \le \|x_1-w_1\|\le 33\mathcal K(W)\ve \le \frac{33c}{ 1- c\gamma(c)}\|\wt w_2\|\le  \frac 12\frac 1{1-c\gamma(c)} \|\wt w_2\|<\|\wt w_2\|.
\end{align*}
Notice moreover that  $K(\wt N)\le K(N)\le \ve$ and
$$K(\wt Q) \le K(Q) \le \frac{ \sigma_2(W)}2 \le 
 \frac 12\frac 1{1-c\gamma(c)} \|\wt w_2\|<\|\wt w_2\|.
$$
We can write $\wt x_2 = \lambda \wt w + (1-\lambda) \wt w_2 + \wt n_2$ for a $0\le \lambda\le 1$, for some $\wt w$ in the convex hull of $[\wt Q\,\, \wt w_1]$.  Since
$$\|\wt w\| \le      \frac 12\frac 1{1-c\gamma(c)} \|\wt w_2\|<\|\wt w_2\|$$
and since $H$ has $e_2$ as a column, then one of the columns of $\wt X$ is equal to $\wt w_2+\wt n$ for some column $\wt n$ of $\wt N$, but $\wt x_2$  is its column with largest norm, so
\begin{align*}
 \|\wt w_2 \| - \ve    &\le \|\wt w_2 + \wt n\| \le \|\wt x_2\| \le \lambda \|\wt w\| + (1-\lambda)\|\wt w_2\| + \ve
 \implies 
 \lambda (\| \wt w_2\| - \|\wt w\| ) \le 2\ve \\
 \implies 
 \lambda &\le \frac{2\ve}{\| \wt w_2\| - \|\wt w\|}
 \le \frac{2\ve}{(1 -\frac 12\frac 1{1-c\gamma(c)})\|\wt w_2\|}\revise{.}  
\end{align*}
Since the projection is linear, we have $x_2 = \lambda w + (1-\lambda)w_2 +  n_2$ for some $w$ in the convex hull of $[ Q\,\, w_1]$  and
\begin{align*}
    \|x_2-w_2\| &\le \ve + \lambda \|w-w_2\| \le
    \ve +\frac{2\ve\|w_1-w_2\|}{(1 -\frac 12\frac 1{1-c\gamma(c)})\|\wt w_2\|}\\
    &\le  
     \ve +\frac{4 K(W)}{(1 -\frac 12\frac 1{1-c\gamma(c)})(1-c\gamma(c)) \sigma_2(W)}\ve 
    \\
    &=
    \left(1+ \beta(c)\mathcal K(W)\right)\ve
    \le 
    \left(1+ \beta(c)\right)\mathcal K(W)\ve \revise{,}  
\end{align*}
where $$\beta(c) = \frac{4 }{(1 -\frac 12\frac 1{1-c\gamma(c)})(1-c\gamma(c)) }
=  \frac{4 }{\frac 12-c\gamma(c) }
\le 99.$$
\end{proof}

\begin{remark}
   \label{rem:better_constants_rank_2_SPA} 
  Theorem \ref{theo:rank2_SPA_Q} does not give an explicit relation between the constants in  the noise bound $\ve\le \ma O(\sigma_2(W)\mathcal K(W)^{-1})$ and the constants of the error bound   $\min_{\pi \in S^2} \max_{i=1,2} \| x_i - w_{\pi(i)}\|  \le \ma O(\mathcal K(W)\varepsilon)$ as in Theorem \ref{theo:single_step_SPA}. Carefully retracing the steps in the proof, we can find that if  $\varepsilon\le c\sigma_2(W)\mathcal K(W)^{-1}$ with $c\le 1/150$, then the error in the first and second step of SPA are respectively bounded by
    \[
   \left[ 1 + \frac{16 }{1 - 4c}\right ]\mathcal K(W) \ve \le 18\mathcal K(W) \ve,\qquad 
    \left[ 1 + \frac {8 (1 - 4 c)^2 (1 - c)^2}{1 - 146 c - 547 c^2 + 376 c^3 - 48 c^4}     \right]
    \mathcal K(W) \ve .
    \]
If for example we want to keep the same error  $33\mathcal K(W) \ve$ as in Theorem \ref{theo:single_step_SPA}, we need $c\le 1/196$. If instead we want to match the error bound $18\mathcal K(W) \ve$ of the first step, we would need $c\le 1/271$. \\
Note that the second formula above, for the second step error, can be simplified using the inequality \[
    \left[ 1 + \frac {8 (1 - 4 c)^2 (1 - c)^2}{1 - 146 c - 547 c^2 + 376 c^3 - 48 c^4}     \right]
    \le 9 + 4400 c
    \]
 that holds for any $c\le 1/200$. 
\end{remark}

\section{Improved bounds for the first two steps of T-SPA and when $r=3$} \label{sec:T-SPA} 

T-SPA is the same algorithm as SPA, except at the first step when it performs a translation instead of an orthogonal projection. 
Because of that, the bounds for T-SPA will be ``one step ahead'' of that of SPA. More precisely, we show improved   bounds for the first two steps of T-SPA in Section~\ref{sec:T-SPAfirsttwo} (Theorem~\ref{theo:firsttwo_T-SPA}), and when $r=3$ in Section~\ref{sec:T-SPAr3} (Theorem~\ref{theo:rank3_T-SPA}).

\subsection{First two steps of T-SPA} \label{sec:T-SPAfirsttwo}

Similarly as done for the first step of SPA in Section~\ref{sec:firststepSPA}, we prove in this section that T-SPA has bounds on the noise of 
$\mathcal{O}\left(\sigma_r(W) / \mathcal K(W) \right)$  and 
error bounds of  $\mathcal{O}\left(\varepsilon  \mathcal K(W) \right)$ for the first two steps. 

\begin{theorem}
    \label{theo:firsttwo_T-SPA}    Let $W\in \f R^{m\times r}$ and $X = WH + N\in\f R^{m\times n}$ with $r\le \min\{m,n\}$, where  $W$ is full rank, 
    $K(N)< \varepsilon\le \ma O(\sigma_r(W)\mathcal K(W)^{-1})$
    and $H\in \f R_+^{r\times n}$ is separable and has stochastic columns. 
Given $x_1, x_2$ the first two columns chosen by the T-SPA algorithm applied on $X$, we have that  
\[
\min_{\pi \in S^r} \max_{i=1,2} \| x_i -w_{\pi(i)}\|   \le \ma O(\mathcal K(W)\varepsilon),
\]
where  $S^r$ is the set of permutations of $\{1,2,\dots,r\}$.
\end{theorem}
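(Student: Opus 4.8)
The plan is to reduce \emph{both} selection steps of T-SPA to the single-step analysis of Theorem~\ref{theo:single_step_SPA}, using the fact that the translation performed at the first iteration leaves the noise matrix $N$ untouched. Since the first step of T-SPA is identical to that of SPA and $H$ is separable with stochastic columns, I first apply Theorem~\ref{theo:single_step_SPA} with $k=r$ and empty $Q$. After relabeling the columns of $W$, this gives that the first extracted column satisfies $x_1 = w_1 + r_1$ with $\|r_1\|\le 33\,\mathcal K(W)\,\varepsilon$ (and $h_{p_1}(1)\ge 1/2$), so the estimate $\|x_1-w_1\|\le \mathcal O(\mathcal K(W)\varepsilon)$ is immediate. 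It remains to control the second extracted column $x_2$.

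The crux is the exact algebraic identity that holds because $H$ is column stochastic: subtracting $x_1 e^\top$ from every column yields
\[
R = X - x_1 e^\top = \hat W H + N, \qquad \hat W = W - x_1 e^\top,
\]
with $N$ and $H$ \emph{unchanged}, where $\hat W$ has columns $w_i - x_1$. Thus the second selection step of T-SPA (which picks the largest-norm column of $R$) is literally the first step of SPA applied to the translated problem $R = \hat W H + N$, and Theorem~\ref{theo:single_step_SPA} can be invoked a second time. The only wrinkle is that $\hat W$ has a near-zero column $w_1 - x_1 = -r_1$ of norm at most $33\,\mathcal K(W)\,\varepsilon$; I therefore reorder and split $\hat W = [\hat W_{big}\,\,\tilde q]$ with target $\hat W_{big} = [w_2 - x_1,\dots,w_r - x_1]$ ($k=r-1$) and $\tilde q = -r_1$ playing the role of $Q$. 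This split is legitimate since, as noted after Theorem~\ref{theo:single_step_SPA}, only the rows of $H$ indexing $\hat W_{big}$ need to be separable, which holds because $H$ is fully separable.

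Next I compare $\hat W_{big}$ with the exact translation $W_t = [w_2 - w_1,\dots,w_r - w_1]$, for which Section~\ref{sec:draw1} gives $\sigma_{r-1}(W_t)\ge \sigma_r(W)$ and $\mathcal K(W_t)\le 2\mathcal K(W)$. Since $\hat W_{big} = W_t - r_1 e^\top$ is a rank-one perturbation, Weyl's inequality controls both $K(\hat W_{big})$ and $\sigma_{r-1}(\hat W_{big})$, and the hypothesis $\varepsilon\le \mathcal O(\sigma_r(W)/\mathcal K(W))$ keeps $\|r_1\|$ small enough that $\hat W_{big}$ stays full rank with $\mathcal K(\hat W_{big}) = \mathcal O(\mathcal K(W))$. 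These bounds verify the two remaining hypotheses $K(\tilde q)\le K(\hat W_{big})/2$ and $\varepsilon\le \sigma_{r-1}(\hat W_{big})\,\mathcal K(\hat W_{big})^{-1}/8$. The second application of Theorem~\ref{theo:single_step_SPA} then returns a column $w_{i'}-x_1$ of $\hat W_{big}$ (so $i'\ne 1$) with $\|(x_2 - x_1) - (w_{i'} - x_1)\| = \|x_2 - w_{i'}\| \le 33\,\mathcal K(\hat W_{big})\,\varepsilon = \mathcal O(\mathcal K(W)\varepsilon)$. Choosing the permutation $\pi$ with $\pi(1)=1$ and $\pi(2)=i'$ then closes the argument.

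I expect the main obstacle to be precisely the conditioning of the perturbed matrix $\hat W_{big}$. Weyl only guarantees $\sigma_{r-1}(\hat W_{big})\ge \sigma_{r-1}(W_t) - \|r_1\|\,\|e\|$, so the shift of the smallest singular value scales like $\|r_1\|\sqrt{r-1}$; keeping this comfortably below $\sigma_r(W)$ is what fixes the admissible constant in the noise bound and is the delicate step, since the error incurred at the second step is $33\,\mathcal K(\hat W_{big})\,\varepsilon$ and therefore blows up if $\sigma_{r-1}(\hat W_{big})$ is allowed to degrade. Everything else (the norm bounds $K(\hat W_{big})\le \mathcal O(K(W))$, the smallness of $K(\tilde q)$, and the final combination of the two vertex estimates) is routine once this conditioning control is in place.
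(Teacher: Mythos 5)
Your overall architecture is the same as the paper's: one application of Theorem~\ref{theo:single_step_SPA} for $x_1$, a translation, and a second application in which the small residual column plays the role of $Q$. The one point where you diverge is also where the argument breaks: you translate the \emph{vertex} matrix by $x_1$ itself, writing $X - x_1e^\top = (W - x_1e^\top)H + N$, and must then control $\sigma_{r-1}(\hat W_{big})$ where $\hat W_{big} = W_t - (x_1-w_1)e^\top$. As you note, Weyl only gives $\sigma_{r-1}(\hat W_{big}) \ge \sigma_{r-1}(W_t) - \|x_1-w_1\|\sqrt{r-1}$, and since $\|x_1-w_1\|$ can be as large as $33\,\mathcal K(W)\varepsilon$, keeping this loss below a constant fraction of $\sigma_{r-1}(W_t)\ge\sigma_r(W)$ forces $\varepsilon \le \mathcal O\bigl(\sigma_r(W)/(\sqrt{r}\,\mathcal K(W))\bigr)$. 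That is strictly weaker than the stated hypothesis $\varepsilon\le \mathcal O(\sigma_r(W)\mathcal K(W)^{-1})$, because in this paper the $\mathcal O(\cdot)$ hides only absolute constants independent of all parameters, including $r$ (see the Notation paragraph); reintroducing a $\sqrt{r}$ would undo exactly the kind of improvement this section claims. You flag this as ``the delicate step'' but do not resolve it, and with a generic rank-one Weyl bound it cannot be resolved: a perturbation $ue^\top$ really can shift $\sigma_{r-1}$ by $\Theta(\|u\|\sqrt{r-1})$. The same obstruction survives even if you compare $\hat W_{big}$ to $[w_2-w,\dots,w_r-w]$ with $w=Wh_1$, since the leftover term $n_1e^\top$ still costs $\varepsilon\sqrt{r-1}$ through Weyl.

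The paper's fix is to translate the vertex matrix not by $x_1$ but by the exact noiseless convex combination $w := Wh_1$ underlying it (with $h_{1,1}\ge 1/2$), i.e.\ $X - x_1e^\top = (W-we^\top)H + (N - n_1e^\top)$. The price is that the effective noise becomes $N - n_1e^\top$ with $K(N-n_1e^\top)<2\varepsilon$ — a harmless factor of $2$. The gain is Lemma~\ref{lem:translated_least_sv}: because $w$ lies in $\conv(W)$ with a dominant weight, one writes $[\hat W,\, w] = W(I+\tilde\lambda e_r^\top)$ with $\|\tilde\lambda\|\le\sqrt{2}\,(1-h_{1,1})\le\sqrt{2}/2$ \emph{independently of $r$}, giving $\sigma_{r-1}(\hat W)\ge \frac{2-\sqrt 2}{2}\sigma_r(W)$ with no dimension factor; Lemma~\ref{lem:first_step_TSPA} then packages the conditioning bounds and the bound $\|w_1-w\|\le\frac12\sigma_{r-1}(\hat W)$ needed for the $Q$-hypothesis. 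Replacing your decomposition by this one (your $\hat W_{big}$ by $[w_2-w,\dots,w_r-w]$, your $\tilde q$ by $w_1-w$, and your noise by $N-n_1e^\top$) repairs the argument; the rest of your proposal — the observation that only the rows of $H$ indexing the large columns need be separable, the verification of the $Q$-hypothesis, and the final triangle inequality and choice of permutation — then goes through essentially as in the paper.
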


Before proving Theorem~\ref{theo:firsttwo_T-SPA}, we need two auxiliary lemmas. 
\begin{lemma}
\label{lem:translated_least_sv} Let $W\in \f R^{m\times r}$ where $r\le m$ and $W$ is full rank. Let  $v\in \conv(W)$ be such that in the convex combination $v=\sum_i\lambda_iw_i$ we have $\lambda_r\ge 1/2$. If $\hat W$ are the first $r-1$ columns of the matrix $W-ve^\top $, then
 \[
 \sigma_r(W) \le \frac{2}{2-\sqrt 2} \sigma_{r-1}(\hat W),\qquad K(\hat W)\le 2 K(W).
 \]
 \end{lemma}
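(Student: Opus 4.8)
The statement has two parts. The bound $K(\hat W)\le 2K(W)$ is immediate and I would dispatch it first: since $v=\sum_i\lambda_i w_i$ is a convex combination of the columns of $W$, convexity of the norm gives $\|v\|\le\max_i\|w_i\|=K(W)$, so each column of $\hat W$ satisfies $\|w_j-v\|\le\|w_j\|+\|v\|\le 2K(W)$. The real content is the singular value inequality, and the plan is to factor $\hat W$ through $W$ and reduce it to a purely combinatorial estimate.

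I would write $\lambda=(\lambda_1,\dots,\lambda_r)^\top$ and observe that $W-ve^\top=W(I_r-\lambda e^\top)$, so that $\hat W=WB$ where $B\in\f R^{r\times(r-1)}$ has $j$-th column $b_j=e_j-\lambda$ for $j=1,\dots,r-1$. Since $W$ has full column rank $r$, every $u\in\f R^r$ satisfies $\|Wu\|\ge\sigma_r(W)\|u\|$; applying this with $u=Bz$ yields $\|\hat Wz\|\ge\sigma_r(W)\|Bz\|\ge\sigma_r(W)\,\sigma_{\min}(B)\,\|z\|$, hence $\sigma_{r-1}(\hat W)\ge\sigma_r(W)\,\sigma_{\min}(B)$. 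It then suffices to bound $\sigma_{\min}(B)$ from below, and this is where $\lambda_r\ge1/2$ must enter. I would first record that $B$ has full column rank $r-1$: a dependence $\sum_{j<r}c_jb_j=0$ forces $\sum_jc_j=0$ by inspecting the $r$-th coordinate (where $\lambda_r>0$), and then $c=0$.

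For the core estimate I would fix a unit $z\in\f R^{r-1}$, set $\tau=e^\top z$, and let $\tilde z\in\f R^r$ be $z$ padded with a zero in position $r$, so that $Bz=\tilde z-\tau\lambda$. Its $r$-th coordinate is $-\tau\lambda_r$ and its first $r-1$ coordinates form $z-\tau\lambda'$ with $\lambda'=(\lambda_1,\dots,\lambda_{r-1})^\top$. The hypothesis is used twice: directly, $\lambda_r\ge1/2$ gives $\|Bz\|^2\ge\tau^2\lambda_r^2\ge\tau^2/4$; and indirectly, since the $\lambda_j$ are nonnegative with $\sum_{j<r}\lambda_j=1-\lambda_r\le1/2$, one has $\|\lambda'\|^2\le(\sum_{j<r}\lambda_j)^2\le1/4$, whence $\|z-\tau\lambda'\|\ge1-|\tau|/2$. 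Writing $t=|\tau|$, for $t\le2$ these combine to $\|Bz\|^2\ge(1-t/2)^2+t^2/4=1-t+t^2/2$, a parabola minimized at $t=1$ with value $1/2$; for $t>2$ the last coordinate alone gives $\|Bz\|\ge t/2>1$. Hence $\sigma_{\min}(B)\ge1/\sqrt2$ and $\sigma_r(W)\le\sqrt2\,\sigma_{r-1}(\hat W)\le\tfrac{2}{2-\sqrt2}\,\sigma_{r-1}(\hat W)$.

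The calculation is short, so the only genuine obstacle is setting up the reduction correctly: seeing that $\hat W=WB$ with this explicit $B$, and recognizing that the two roles of $\lambda_r\ge1/2$ — a lower bound on the last coordinate of $Bz$ and, through $\sum_{j<r}\lambda_j\le1/2$, an upper bound on $\|\lambda'\|$ — are precisely what keeps $\sigma_{\min}(B)$ away from $0$. Everything else (submultiplicativity of $\sigma_{\min}$ under $WB$, the rank check, the one-variable minimization) is routine. I would also note that $\lambda=(\tfrac12,0,\dots,0,\tfrac12)$ with $z=e_1$ attains $\|Bz\|=1/\sqrt2$, so the constant $\sqrt2$ is sharp and the stated $\tfrac{2}{2-\sqrt2}$ is not optimal.
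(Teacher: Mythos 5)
Your proof is correct, and it takes a cleaner route than the paper's. Both arguments share the same skeleton --- factor $\hat W$ through $W$ and lower-bound the smallest singular value of the remaining combinatorial factor using $\lambda_r\ge 1/2$ --- but the execution differs. The paper pads $\hat W$ with a zero column, writes the padded matrix as $W(I+\widetilde\lambda e_r^\top)-ve^\top$, invokes singular value interlacing to absorb the rank-one subtraction of $ve^\top$, and then applies Weyl's inequality to get $\sigma_r(I+\widetilde\lambda e_r^\top)\ge 1-\|\widetilde\lambda\|\ge 1-\sqrt2/2$. You instead use the exact rectangular factorization $\hat W=WB$ with $B=[e_1-\lambda,\dots,e_{r-1}-\lambda]$, which eliminates both the interlacing step and Weyl's inequality, and you bound $\sigma_{\min}(B)$ by a direct one-variable minimization. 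Your computation checks out: $\|Bz\|^2=\|z-\tau\lambda'\|^2+\tau^2\lambda_r^2\ge(1-t/2)^2+t^2/4=1-t+t^2/2\ge 1/2$ for $t=|\tau|\le 2$, and $\|Bz\|^2\ge t^2/4>1$ otherwise, giving $\sigma_{\min}(B)\ge 1/\sqrt2$. What your approach buys is a sharper constant: $\sigma_r(W)\le\sqrt2\,\sigma_{r-1}(\hat W)$ versus the paper's $\frac{2}{2-\sqrt2}=2+\sqrt2$, an improvement by a factor of $1+\sqrt2$, and your example $\lambda=(\tfrac12,0,\dots,0,\tfrac12)$ with $W=I_r$, $z=e_1$ shows $\sqrt2$ is attained, so your constant is optimal. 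Since Lemma~\ref{lem:translated_least_sv} feeds into the constants of Lemma~\ref{lem:first_step_TSPA} and Theorems~\ref{theo:firsttwo_T-SPA} and~\ref{theo:rank3_T-SPA}, the sharper constant would propagate to slightly less restrictive noise thresholds there, though it does not change any asymptotic statement. The only cosmetic remark is that your explicit rank check on $B$ is redundant once $\sigma_{\min}(B)\ge 1/\sqrt2>0$ is established.
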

\begin{proof}
   See Appendix~\ref{app:proofsLem3}.
\end{proof}

\begin{lemma}
    \label{lem:first_step_TSPA} Let $W\in \f R^{m\times r}$ and $X = WH + N\in\f R^{m\times n}$ with $r\le \min\{m,n\}$, where  $W$ is full rank,      $K(N)< \varepsilon\le c\sigma_r(W)\mathcal K(W)^{-1}$, $c\le 1/233$,
    and $H\in \f R_+^{r\times n}$ is separable and has stochastic columns. 
Suppose $x_1$ is the first column chosen by the T-SPA when applied on $X$, and it is associated to $w_1$ in the sense of Theorem \ref{theo:single_step_SPA}, i.e. $x_1 = Wh_1 + n_1$ where $h_{1,1}\ge 1/2$. Call $w:= Wh_1$ and let 
\[
     \hat X := X - x_1e^\top  = (W-we^\top )H + (N-n_1e^\top) := [r_1 \,\,\hat W] H + \wt N, 
     \]
where $r_1 := w_1-w$. Then $\hat W$ is full rank and
\begin{itemize}
    \item $\mathcal K(\hat W)\le 
\frac { 4}{2-\sqrt 2  }\mathcal K( W),$
    \item $ \sigma_r(W) \mathcal K(W)^{-1} \le 
\frac { 4}{3-2\sqrt 2}  \sigma_{r-1}(\hat W) \mathcal  K(\hat W)^{-1},$
 \item $ \|r_1\| \le \frac 12 \sigma_{r-1}(\hat W)$.
\end{itemize}
\end{lemma}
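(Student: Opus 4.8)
The plan is to reduce everything to Lemma~\ref{lem:translated_least_sv} applied to $W$, combined with the first-step error bound coming from Theorem~\ref{theo:single_step_SPA}. First I note that $w=Wh_1$ is a convex combination of the columns of $W$ with dominant weight $h_{1,1}\ge 1/2$, because $h_1$ is a stochastic column of $H$, so $w\in\conv(W)$. Since singular values and the quantity $K(\cdot)$ are invariant under column permutations, I reindex $W$ so that the dominant weight sits in the last coordinate; then $\hat W=[\,w_2-w,\dots,w_r-w\,]$, i.e.\ the non-dominant columns of $W-we^\top$, is exactly the matrix ``first $r-1$ columns of $W-ve^\top$'' appearing in Lemma~\ref{lem:translated_least_sv} with $v=w$ (up to column order). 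Applying that lemma gives the two basic inequalities
\[
\sigma_{r-1}(\hat W)\ge \tfrac{2-\sqrt2}{2}\,\sigma_r(W),\qquad K(\hat W)\le 2K(W).
\]

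From these, all three quantitative claims follow by elementary manipulation. Full rank of $\hat W$ is immediate, since $\sigma_{r-1}(\hat W)\ge \tfrac{2-\sqrt2}{2}\sigma_r(W)>0$. Dividing the two inequalities gives $\mathcal K(\hat W)=K(\hat W)/\sigma_{r-1}(\hat W)\le \tfrac{4}{2-\sqrt2}\mathcal K(W)$, which is the first bullet. For the second bullet I rewrite $\sigma_r(W)\mathcal K(W)^{-1}=\sigma_r(W)^2/K(W)$ and similarly for $\hat W$, so the target inequality becomes $\sigma_r(W)^2/K(W)\le \tfrac{4}{3-2\sqrt2}\,\sigma_{r-1}(\hat W)^2/K(\hat W)$; substituting the two basic inequalities produces the factor $8/(2-\sqrt2)^2$, and the identity $(2-\sqrt2)^2=2(3-2\sqrt2)$ collapses this to precisely $\tfrac{4}{3-2\sqrt2}$.

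For the last bullet I must control $\|r_1\|=\|w_1-w\|$. Writing $r_1=(w_1-x_1)+n_1$ and using that $x_1$ is associated to $w_1$ in the sense of Theorem~\ref{theo:single_step_SPA} (which applies since $c\le 1/233<1/8$ and there is no $Q$), I get $\|r_1\|\le 33\mathcal K(W)\varepsilon+\varepsilon\le 34\mathcal K(W)\varepsilon$, using $\mathcal K(W)\ge 1$. The noise bound $\varepsilon\le c\,\sigma_r(W)\mathcal K(W)^{-1}$ then gives $\|r_1\|\le 34c\,\sigma_r(W)$, while the first basic inequality yields $\tfrac12\sigma_{r-1}(\hat W)\ge \tfrac{2-\sqrt2}{4}\sigma_r(W)$. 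Hence $\|r_1\|\le \tfrac12\sigma_{r-1}(\hat W)$ holds as soon as $34c\le \tfrac{2-\sqrt2}{4}$, that is $136/233\le 2-\sqrt2$.

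The algebraic steps are routine; the two points requiring care are the clean invocation of Lemma~\ref{lem:translated_least_sv} after permuting columns (so that its hypotheses, $v\in\conv(W)$ with dominant \emph{last} weight, are literally satisfied), and the constant bookkeeping in the final step, where one checks that the chosen threshold indeed satisfies $136/233\approx 0.5837<2-\sqrt2\approx 0.5858$, so that $c\le 1/233$ is exactly what is needed.
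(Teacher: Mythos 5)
Your proposal is correct and follows essentially the same route as the paper's proof: invoke Theorem~\ref{theo:single_step_SPA} for $\|x_1-w_1\|\le 33\mathcal K(W)\varepsilon$, apply Lemma~\ref{lem:translated_least_sv} to get $\sigma_{r-1}(\hat W)\ge \tfrac{2-\sqrt2}{2}\sigma_r(W)$ and $K(\hat W)\le 2K(W)$, and then derive the three bullets by the same algebra with the same constants (including the decomposition $r_1=(w_1-x_1)+n_1$, which matches the paper's $\|r_1\|\le\|x_1-w_1\|+\|x_1-w\|$ since $x_1-w=n_1$). Your explicit column-permutation step to match the hypotheses of Lemma~\ref{lem:translated_least_sv} is a detail the paper leaves implicit, but it is the right justification.
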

\begin{proof}
   See Appendix~\ref{app:proofsLem4}. 
\end{proof}

We can now prove Theorem~\ref{theo:firsttwo_T-SPA}. 

\begin{proof}[Proof of Theorem~\ref{theo:firsttwo_T-SPA}]   
Up to a permutation of the columns of $X$, $W$ and $H$, T-SPA chooses at the first step the column $x_1$ 
and it is associated to $w_1$ in the sense of Theorem \ref{theo:single_step_SPA}, i.e. $x_1 = Wh_1 + n_1$ where $h_{1,1}\ge 1/2$. Call $w:= Wh_1$ and let 
\[
     \hat X := X - x_1e^\top  = (W-we^\top )H + (N-n_1e^\top) := [r_1 \,\,\hat W] H + \wt N, 
     \]
where $r_1 := w_1-w$ and $K(\wt N) < 2\ve$. 
Suppose $\ve \le c\sigma_r(W)\mathcal K(W)^{-1}$ with $c = 1/374$.
Lemma \ref{lem:first_step_TSPA}  tells us that  $\hat W$ is full rank, $ \|r_1\| \le \frac 12 \sigma_{r-1}(\hat W)\le \frac 12 K(\hat W)$ and
  \[
 \ve \le 
 c \sigma_r(W) \mathcal K(W)^{-1}\le 
 \frac { 4c}{3-2\sqrt 2} \sigma_{r-1}(\hat W)\mathcal   K(\hat W)^{-1}
 \le \sigma_{r-1}(\hat W)\mathcal   K(\hat W)^{-1}/16 \revise{.} 
 \]
We can thus use again Theorem \ref{theo:single_step_SPA} to obtain that, up to permutations, the second step of T-SPA extracts the column $\hat x_2 = x_2-x_1$ of $\hat X$ and it is close to the first column of $\hat W$, that is $\hat w_1 = w_2-w$,  in the sense $\|\hat x_2 -\hat w_1\|\le 66\mathcal K(\hat W)\varepsilon$. This lets us conclude that $x_2$ is close to $w_2$ as, by Lemma \ref{lem:first_step_TSPA},       \[
      \|x_2 - w_2\|  \le  
\|\hat x_2 -\hat w_1\|  + \|x_1-w \|  \le 67\mathcal K(\hat W)\varepsilon\le 
67 \frac {4}{2-\sqrt 2}    \mathcal K( W)\varepsilon.
\]
\end{proof}

\subsection{T-SPA when $r=3$} \label{sec:T-SPAr3}

As done for SPA when $r=2$ in Section~\ref{sec:SPAr2}, we prove in this section that 
 when $r=3$, T-SPA has 
 a bound on the noise of $\varepsilon \leq \mathcal{O}\left(\frac{\sigma_r(W)}{ \mathcal K(W)} \right)$, and 
an error bound in $\mathcal{O}\left(\varepsilon  \mathcal K(W) \right)$. 

\begin{theorem}
    \label{theo:rank3_T-SPA}    Let $W\in \f R^{m\times 3}$ and $X = WH + N\in\f R^{m\times n}$ with $3\le \min\{m,n\}$, where  $W$ is full rank, 
     $K(N)< \varepsilon\le \ma O(\sigma_3(W)\mathcal K(W)^{-1})$
    and $H\in \f R_+^{3\times n}$ is separable and has stochastic columns. 
Given $x_1, x_2,x_3$ the three columns chosen by the T-SPA algorithm applied on $X$, we have that  
\[
\min_{\pi \in S^3}\max_{i=1,2,3}\| x_i -w_{\pi(i)}\|   \le \ma O(\mathcal K(W)\varepsilon).
\]
\end{theorem}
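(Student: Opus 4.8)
The plan is to mirror the proof of Theorem~\ref{theo:firsttwo_T-SPA}, but where that argument invoked Theorem~\ref{theo:single_step_SPA} a second time to control only the second step, here I replace the second invocation by the full rank-2 result, Theorem~\ref{theo:rank2_SPA_Q}, applied to the translated data. The key observation is that after the translation performed at the first step, the two remaining selection/projection steps of T-SPA are \emph{exactly} ordinary SPA run on the translated matrix $\hat X$, and $\hat X$ carries a genuine rank-2 vertex structure (two large columns plus one short one).

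First I would handle the translation step. Choosing $\varepsilon \le c\,\sigma_3(W)\mathcal K(W)^{-1}$ for a sufficiently small absolute constant $c$, Theorem~\ref{theo:single_step_SPA} guarantees that, up to permuting the columns of $W$ and $H$, T-SPA selects $x_1 = Wh_1 + n_1$ with $h_{1,1}\ge 1/2$ and $\|x_1 - w_1\| \le 33\,\mathcal K(W)\varepsilon$. Writing $w := Wh_1$ and translating, I form
\[
\hat X := X - x_1 e^\top = (W - we^\top)H + (N - n_1 e^\top) =: [\,r_1 \;\; \hat W\,]H + \wt N,
\]
with $r_1 := w_1 - w$ and $K(\wt N) < 2\varepsilon$; here the identity $(W-we^\top)H = WH - we^\top$ uses that $H$ is column stochastic. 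Lemma~\ref{lem:first_step_TSPA} then supplies the three facts I need: $\hat W$ is full rank with $\mathcal K(\hat W)\le \tfrac{4}{2-\sqrt2}\mathcal K(W)$, the noise budget is preserved through $\sigma_3(W)\mathcal K(W)^{-1} \le \tfrac{4}{3-2\sqrt2}\sigma_2(\hat W)\mathcal K(\hat W)^{-1}$, and $\|r_1\|\le \tfrac12\sigma_2(\hat W)$.

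Next I would reinterpret $\hat X$ as a rank-2 separable instance. Reordering the columns of $[\,r_1\;\;\hat W\,]$ so that the two large columns $\hat W = [\,w_2-w,\; w_3-w\,]$ come first and $r_1$ plays the role of the auxiliary block $Q$, I verify the hypotheses of Theorem~\ref{theo:rank2_SPA_Q}: $\hat W$ is full rank; the bound $K(Q) = \|r_1\| \le \sigma_2(\hat W)/2$ is precisely the third item of Lemma~\ref{lem:first_step_TSPA}; the permuted $H$ remains column stochastic and retains the unit-vector columns corresponding to $w_2$ and $w_3$, which is all that is required (by the separability relaxation noted after Theorem~\ref{theo:single_step_SPA}, only the vertex block of $H$ must be separable); and, since $2\varepsilon \le 2c\,\tfrac{4}{3-2\sqrt2}\,\sigma_2(\hat W)\mathcal K(\hat W)^{-1}$, choosing $c$ small makes $K(\wt N) < 2\varepsilon$ meet the noise hypothesis of Theorem~\ref{theo:rank2_SPA_Q}. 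Because the last two steps of T-SPA are exactly SPA run on $\hat X$, Theorem~\ref{theo:rank2_SPA_Q} yields that $\hat x_i := x_i - x_1$ for $i=2,3$ match the set $\{w_2-w,\;w_3-w\}$ up to a transposition, with error $\mathcal O(\mathcal K(\hat W)\varepsilon) = \mathcal O(\mathcal K(W)\varepsilon)$.

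Finally I would translate back. Writing the matched vertex as $w_{\tau(i)}-w$ for $i=2,3$ and using $x_1 - w = n_1$ with $\|n_1\|\le \varepsilon$, the identity $\hat x_i - (w_{\tau(i)}-w) = (x_i - w_{\tau(i)}) - n_1$ gives $\|x_i - w_{\tau(i)}\| \le \mathcal O(\mathcal K(W)\varepsilon) + \varepsilon = \mathcal O(\mathcal K(W)\varepsilon)$. Combined with $\|x_1 - w_1\| \le 33\,\mathcal K(W)\varepsilon$, this assembles into a single permutation $\pi\in S^3$ with $\max_i \|x_i - w_{\pi(i)}\| \le \mathcal O(\mathcal K(W)\varepsilon)$, as claimed. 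I expect the only delicate point to be the bookkeeping of absolute constants: one must pick $c$ small enough that, after the factor $2$ coming from $K(\wt N) < 2\varepsilon$ and the factor $\tfrac{4}{3-2\sqrt2}$ from Lemma~\ref{lem:first_step_TSPA}, the noise requirement of Theorem~\ref{theo:rank2_SPA_Q} still holds; everything else is a clean composition of the two previously established results.
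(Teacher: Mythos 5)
Your proposal is correct and follows essentially the same route as the paper: apply Theorem~\ref{theo:single_step_SPA} to the first selection, translate by $x_1$, invoke Lemma~\ref{lem:first_step_TSPA} to transfer the conditioning and noise bounds to $\hat W$, and then apply Theorem~\ref{theo:rank2_SPA_Q} to $\hat X$ with $r_1$ playing the role of the auxiliary block $Q$, before undoing the translation. The only difference is that you spell out explicitly the role reversal of $r_1$ as $Q$ and the separability relaxation, which the paper leaves implicit.
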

\begin{proof}
Up to a permutation of the columns of $X$, $W$ and $H$, T-SPA chooses at the first step the column $x_1$ 
and it is associated to $w_1$ in the sense of Theorem \ref{theo:single_step_SPA}, i.e. $x_1 = Wh_1 + n_1$ where $h_{1,1}\ge 1/2$. Call $w:= Wh_1$ and let 
\[
     \hat X := X - x_1e^\top  = (W-we^\top )H + (N-n_1e^\top) := [r_1 \,\,\hat W] H + \wt N, 
     \]
where $r_1 := w_1-w$ and $K(\wt N) < 2\ve$. 
  Suppose $\varepsilon\le c\sigma_3(W)\mathcal K(W)^{-1}$ with $c\le 1/233$, sot that we can apply Lemma \ref{lem:first_step_TSPA} that tells us that  $\hat W$ is full rank, $ \|r_1\| \le \frac 12 \sigma_{2}(\hat W)$ and
  \[
  \ve \le 
 c \sigma_3(W) \mathcal K(W)^{-1}\le 
 \frac { 4c}{3-2\sqrt 2} \sigma_{2}(\hat W)\mathcal   K(\hat W)^{-1}
 \le \ma O(\sigma_{2}(\hat W)\mathcal   K(\hat W)^{-1}).
 \]
We can thus use Theorem \ref{theo:rank2_SPA_Q} to obtain that, up to permutations, the remaining steps of T-SPA extract the columns $\hat x_2, \hat x_3$ of $\hat X$ and they are close to the columns of $\hat W$, that are $\hat w_1 = w_2-w$ and $\hat w_2 = w_3-w$, in the sense $\|\hat x_2 -\hat w_1\|\le \ma O(\mathcal K(\hat W)\varepsilon)$ and $\|\hat x_3 -\hat w_2\|\le \ma O(\mathcal K(\hat W)\varepsilon)$. We thus conclude that      \[
      \max_{i = 2,3}\|x_i - w_i\|  \le 
\max_{i = 2,3}\|\hat x_i -\hat w_{i-1}\| + \|x_1-w\|   \le \ma O(\mathcal K(\hat W)\varepsilon) +\ve =  \ma O(\mathcal K(\hat W)\varepsilon). 
\]
\end{proof} 
In a sense, Theorem \ref{theo:rank2_SPA_Q} and Theorem \ref{theo:rank3_T-SPA} say the same thing: Theorem \ref{theo:rank2_SPA_Q} shows that if we have some points that are approximately convex combinations of two vertices and the origin, then we are able to extract two vertices;  Theorem \ref{theo:rank3_T-SPA} says that if we have points  that are approximately convex combinations of three vertices, then once we bring the first vertex to zero, we can extract the other two.

\section{Tightness of the error bound in Theorem~\ref{th:spanoise} when $r \geq 3$}  \label{sec:tighT-SPA} 


  In this section, we prove that the error bound in Theorem~\ref{th:spanoise} is tight when $r \geq 3$ (up to the hidden constant factors), which was an important open question regarding SPA~\cite[p.~231]{gillis2020nonnegative}.    

\begin{theorem} \label{theo:tightSPA} 
For $r \geq 3$, the second step of SPA might have an error in $\revise{\Omega}\left(\varepsilon  \mathcal K^2(W) \right)$. \revise{In particular, there exist a full rank matrix $W\in \f R^{3\times 3}$ and an absolute constant $c>0$ such that, for any $\ve$ in the interval $(0,c)$, there exist  $H_\ve\in \f R^{3\times 4}$ separable and column stochastic,  and $N_\ve\in \f R^{3\times 4}$ with $K(N_\ve)\le \ve$, for which the error of the second step of SPA applied to $WH_\ve+ N_\ve$ is larger than $c\ve \mathcal K^2(W)$}. This implies that the error bound in Theorem~\ref{th:spanoise} is tight for $r \geq 3$, up to constant terms.  
\end{theorem}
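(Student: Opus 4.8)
The goal is to exhibit a single $3\times 3$ matrix $W$ with large conditioning $\mathcal K(W)$, together with a family of noise matrices $N_\ve$ (scaling linearly in $\ve$), such that SPA's \emph{second} extracted column misses every true vertex by a distance on the order of $\ve\,\mathcal K^2(W)$. Since Theorem~\ref{th:spanoise} guarantees an upper bound of $\mathcal O(\ve\,\mathcal K^2(W))$, matching this from below with an explicit example establishes tightness up to constants.

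\textbf{Construction.} The plan is to design $W$ so that two of its columns are nearly parallel and long, making $\mathcal K(W)$ large, while the third is short; this is exactly the regime where the projection step is dangerous. Concretely I would take something like
\[
W = \begin{pmatrix} 1 & 1 & 0 \\ \eta & -\eta & 0 \\ 0 & 0 & \eta \end{pmatrix}
\]
for a small parameter $\eta$, so that $w_1,w_2$ differ only in a tiny second coordinate (hence $\sigma_3(W)=\mathcal O(\eta)$ and $\mathcal K(W)=\mathcal O(1/\eta)$), while $w_3$ is small. The idea is that SPA first correctly extracts (a perturbation of) $w_1$, the longest column. After projecting onto the orthogonal complement of $x_1\approx w_1$, the images of $w_1$ and $w_2$ collapse to nearly the same short residual, so the geometry that separates the second genuine vertex from spurious convex combinations degrades by a full factor of $\mathcal K(W)$. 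I would then place a noise perturbation on the data point near $w_1$ (the $\ve$-column of $N_\ve$) oriented so that the extracted $x_1$ is tilted by an angle proportional to $\ve/\|w_1\|$, and show this tilt propagates through the projection to produce an $\mathcal O(\ve\,\mathcal K(W))$ error \emph{in the projected space}, which after accounting for the reconstruction step back in the original space amplifies to $\mathcal O(\ve\,\mathcal K^2(W))$.

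\textbf{Key steps, in order.} First, verify the setup: check that $H_\ve$ can be chosen separable and column-stochastic with $K(N_\ve)\le\ve$, and compute $\sigma_3(W)$, $K(W)$, $\mathcal K(W)$ explicitly as functions of $\eta$. Second, analyze the first selection step and confirm that SPA picks the noisy copy of $w_1$ as $x_1$, then compute the perturbed projector $P_{x_1}$ and quantify, via an estimate like $\|P_{x_1}-P_{w_1}\|=\mathcal O(\ve/\|w_1\|)$, how far the projection direction deviates from the ideal $P_{w_1}$. Third, track the projected columns $P_{x_1}w_2$ and $P_{x_1}w_3$ and a chosen spurious interior data point, and show that after projection the second selection step of SPA picks a point whose preimage in $\mathbb R^3$ is bounded away from all of $w_1,w_2,w_3$ by $\Omega(\ve\,\mathcal K^2(W))$. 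Fourth, fix $\eta$ to a constant (so $\mathcal K(W)$ is a fixed large number) and let $\ve\to 0$, checking that the lower bound $c\,\ve\,\mathcal K^2(W)$ holds uniformly on an interval $(0,c)$.

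\textbf{Main obstacle.} The delicate part is the second step: I must ensure that the spurious point SPA actually selects (the true argmax of the projected norms) is the one realizing the large error, rather than a benign vertex. This requires carefully arranging the fourth data column of $H_\ve$ as an interior convex combination positioned so that, \emph{after} the error-amplifying projection, its projected norm narrowly exceeds that of the projected copies of $w_2$ and $w_3$. Getting the inequality between these projected norms to come out the right way, while keeping the reconstructed error at the full $\mathcal K^2(W)$ scale, is where the bulk of the quantitative work lies; the rest reduces to the norm and singular-value estimates already used in the proofs of Theorems~\ref{theo:single_step_SPA} and~\ref{theo:rank2_SPA_Q}.
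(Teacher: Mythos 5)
There is a genuine gap: the construction you propose cannot reach the $\Omega(\varepsilon\,\mathcal K^2(W))$ scale. Two separate issues compound. First, your ``tilt'' mechanism is too weak: if $x_1=w_1+\varepsilon u$, then $\|P_{x_1}-P_{w_1}\|=\mathcal O(\varepsilon/\|w_1\|)$, so applied to columns of norm at most $K(W)$ it perturbs the projected data by only $\mathcal O(\varepsilon)$ — there is no extra factor of $\mathcal K(W)$ hiding in the projector, and there is no ``reconstruction step'' in SPA that could amplify it (the error is simply the distance from a selected column of $X$ to the nearest $w_i$). Second, and more importantly, with your $W$ the projected subproblem is \emph{well}-conditioned: $\wt w_2 \approx (0,-2\eta,0)$ and $\wt w_3\approx(0,0,\eta)$ give $\mathcal K(\wt W)=\mathcal O(1)$, so the second selection step picks a point within $\mathcal O(\varepsilon)$ of $\wt w_2$ or $\wt w_3$ in the projected space; inverting through $\sigma_2(\wt W)\approx\eta$ shows the corresponding coefficients satisfy $|h_1|,|h_3|=\mathcal O(\varepsilon/\eta)$, whence $\|x-w_2\|\le |h_1|\,\|w_1-w_2\|+|h_3|\,\|w_3-w_2\|+\varepsilon = \mathcal O(\varepsilon/\eta)=\mathcal O(\varepsilon\,\mathcal K(W))$. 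Your example is therefore provably a full factor $\mathcal K(W)$ short of the target.

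The missing idea is the geometry the paper's example exploits. One needs the two remaining vertices to have (nearly) \emph{equal and maximal} projected norm, and the spurious data point to sit on the chord joining them, with that chord (i) nearly tangent to the sphere of that radius in the projected space, so that a norm deficit of $2\varepsilon$ permits a displacement of $\mathcal O(\varepsilon/\delta)$ along the projected chord, and (ii) heavily foreshortened by the projection, so that this displacement corresponds to $\mathcal O(\varepsilon/\delta^2)=\mathcal O(\varepsilon\,\mathcal K^2(W))$ in the original space. Concretely the paper takes $W=K\bigl[(1,0,0)^\top,\ (0,\tfrac12,-\delta)^\top,\ (\tfrac12,\tfrac12,\delta)^\top\bigr]$, puts no noise on the first column (so the first projection exactly kills the first coordinate), radially shrinks the data points at $w_2,w_3$ by $\varepsilon$, and pushes a point on the segment $[w_2,w_3]$ outward by $\varepsilon$ so that all three projected columns have exactly the same norm; the selected chord point is then at distance $\ge\varepsilon/(4\delta^2)-\varepsilon$ from every $w_i$. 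In your construction the projected norms of $\wt w_2$ and $\wt w_3$ differ by a constant factor, so the near-tangency factor is lost and only one power of $\mathcal K(W)$ survives.
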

\begin{proof} 
Let us provide a family of examples where SPA achieves the worst-case bound, already at the second step. 

Let $0<\delta<1/4$, $K>1$, $0<\varepsilon<K\delta^2/3$ and 
\[
W = K \begin{pmatrix}
    1 & 0 & 1/2\\
    0 & 1/2 & 1/2\\
    0 & -\delta & \delta
\end{pmatrix}.
\]
The product of the singular values of $W$ is $|\det(W)| = K^3\delta$ and $K(W) = K\le \|W\|$ since $1/4+\delta^2 < 1/2 + \delta^2 <1$.  Given the decomposition
\[
W = W_1 + W_2 := K \begin{pmatrix}
    1 & 0 & 0\\
    0 & 1/2 & 0\\
    0 & -\delta & 0
\end{pmatrix} + K \begin{pmatrix}
    0 & 0 & 1/2\\
    0 & 0& 1/2\\
    0 & 0 & \delta
\end{pmatrix}, 
\]
it is immediate to see that $\sigma_1(W_1) = K$, $\sigma_2(W_1) = K\sqrt{\delta^2 + 1/4}>K/2$ and $\|W_2\| = K\sqrt{\delta^2 + 1/2}<K$, so by triangular inequality $\sigma_2(W) \le \sigma_1(W) = \|W\| \le \|W_1\|+ \|W_2\| < 2K$ and by the interlacing property of the singular values, $\sigma_1(W) \ge \sigma_2(W) \ge \sigma_2(W_1) > K/2$. As a consequence, 
\[
 \sigma_3(W) = \frac{K^3\delta}{\sigma_1(W)\sigma_2(W)} \in \left[  \frac{K\delta}{4},4K\delta \right]\implies \frac{8}{\delta}  \ge \mathcal K(W) \ge \frac{1}{8\delta}.
\]
Notice that
\[
\frac 12 + 2\frac \ve K < \frac 12 + \frac 23 \delta^2 = \sqrt{\frac 14 + \frac 23 \delta^2 + \frac 49 \delta^4 } < \sqrt{\frac 14 + \frac 23 \delta^2 + \frac 1{36} \delta^2 }<
\sqrt{\frac 14 +  \delta^2 }
\]
so there exists $\gamma$ such that $\delta>\gamma>0$ and $\sqrt{1/4 +\delta^2} -2\varepsilon/K = \sqrt{1/4 +\gamma^2}$. Let also  
\[
\wt v:= K \begin{pmatrix}
    0 \\
     1/2 \\
    -\delta 
\end{pmatrix}
+ \frac 12\left(1-\frac \gamma\delta \right)
K \begin{pmatrix}
     1/2\\
    0\\
     2\delta
\end{pmatrix}
=
K \begin{pmatrix}
   1/4\left(1-\frac \gamma\delta \right)\\
     1/2\\
 -\gamma
\end{pmatrix},\quad
v := \wt v + \frac{\ve}{ \sqrt{\frac 14+\gamma^2}}\begin{pmatrix}
   0\\
     1/2\\
 -\gamma
\end{pmatrix}.
\]
The vector $\wt v$ is in the convex hull of $W$ since $\wt v = w_3(1-\gamma/\delta)/2 + w_2[1- (1-\gamma/\delta)/2]$, and $v$ is a $\varepsilon$-perturbation of it. 
Notice moreover that $$\|v\| \le  \ve + \|\wt v\|\le \ve + K\sqrt{\frac 1{16} + \frac 14 + \gamma^2}< K\frac {\delta^2}3 + K\sqrt{\frac 5{16} + \delta^2}<K\left(
\frac 1{3\cdot 16} + \sqrt{\frac 6{16}}
\right)<K.$$
Let 
\[
z_1 := w_2 -\frac{\ve}{ \sqrt{\frac 14+\delta^2}} \begin{pmatrix}
    0 \\
     1/2\\
     -\delta 
\end{pmatrix}=
\left( -\frac{\ve}{ \sqrt{\frac 14+\delta^2}}+K\right)
\begin{pmatrix}
    0 \\
     1/2\\
     -\delta 
\end{pmatrix}
\]
which is the second column of $W$ up to a perturbation of norm $\ve$. In particular, $\|z_1\| =\|w_2\| -\ve <K-\ve$. 
Finally, let
\[
z_2 := w_3 -\frac{\ve}{ \sqrt{\frac 14+\delta^2}} \begin{pmatrix}
    0 \\
     1/2\\
     \delta 
\end{pmatrix}
=
\left( -\frac{\ve}{ \sqrt{\frac 14+\delta^2}}+K\right)
\begin{pmatrix}
    0 \\
     1/2\\
     \delta 
\end{pmatrix}
+K
\begin{pmatrix}
     1/2\\
     0\\
    0
\end{pmatrix}
\]
which is the third column of $W$ up to a perturbation of norm  $\ve$. In particular, 
$$\|z_2\|\le \|w_3\| +\ve = K \sqrt{\frac 12 + \delta^2}      +2\ve-\ve <
 K \sqrt{\frac 12 + \frac 1{16}}  +\frac 23 K\delta^2    -\ve 
 < K \left( \frac 3{4}  +\frac 1{24} \right)    -\ve 
<K-\ve.$$ Let us build the matrix
\[
X =  \begin{pmatrix}
    K &  &  &\\
    0 & z_1 & z_2& v\\
    0 &  &  & 
\end{pmatrix}
\]
which is in the form $WH + N$ with $H$ separable and column stochastic, $W$ full rank and $K(N)\le \ve$.  

 The first step of SPA identifies the first column as the vector of largest norm and projects on its orthogonal space, obtaining the new residual matrix 
\[
\wt X = 
K \begin{pmatrix}
0 & 0 & 0& 0 \\ 
    0 & 1/2 \left(1 -\frac{\ve}{ K\sqrt{\frac 14+\delta^2}}\right) & 1/2 \left( 1-\frac{\ve}{ K\sqrt{\frac 14+\delta^2}}\right)& 1/2 \left( 1 + \frac{\ve}{ K\sqrt{\frac 14+\gamma^2}}\right)\\
    0 & - \delta \left(1 -\frac{\ve}{ K\sqrt{\frac 14+\delta^2}}\right) & \delta \left( 1-\frac{\ve}{ K\sqrt{\frac 14+\delta^2}}\right) &  - \gamma \left( 1+ \frac{\ve}{K\sqrt{\frac 14+\gamma^2}}  \right) 
\end{pmatrix}. 
\] 
 Figure~\ref{fig:sec4_3d}  represents the columns of $X$ and $\wt X$,   and Figure~\ref{fig:sec4_2d} that of $\wt X$ discarding the first coordinate which is equal to zero for all columns. 
\begin{figure}[!htb]
   \begin{minipage}{0.48\textwidth}
     \centering
     \includegraphics[height=300px]{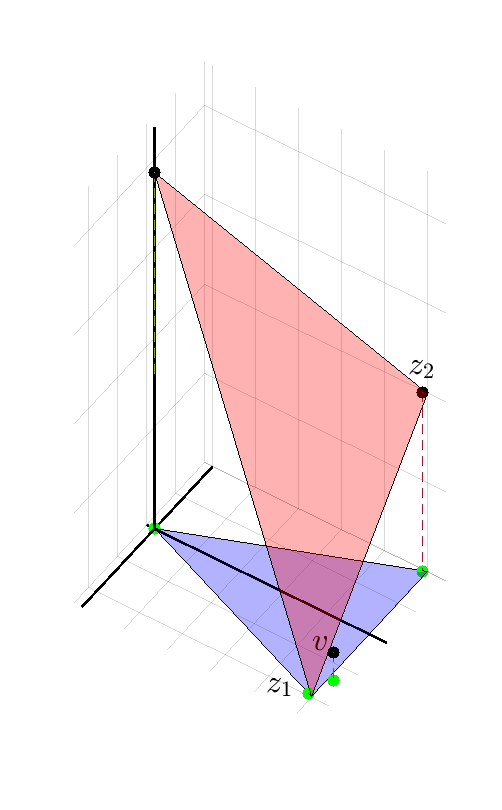}
     \caption{Columns of matrix $X$ as black dots and of matrix $\wt X$ as green dots.  \label{fig:sec4_3d}}
   \end{minipage}\hfill
   \begin{minipage}{0.48\textwidth}
     \centering
     \includegraphics[height=300px]{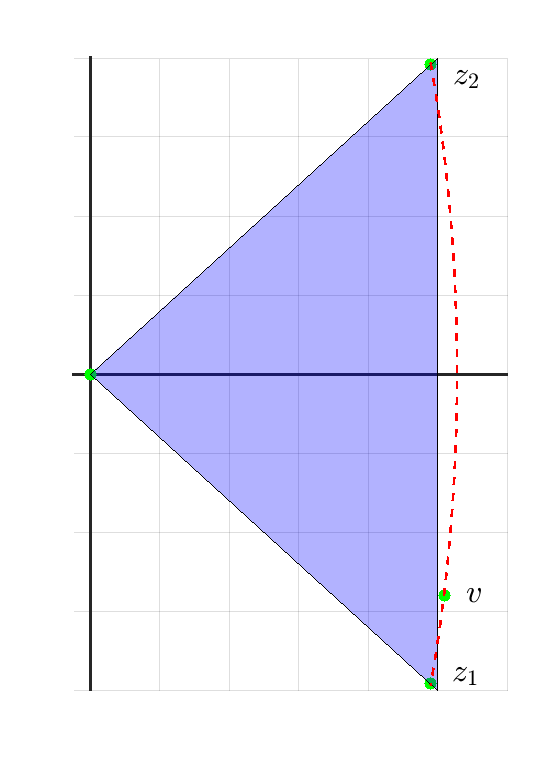}
     \caption{
     Columns of matrix $\wt X(\{2,3\},:)$. The points of the dashed red line have the same norm.
     \label{fig:sec4_2d}}
   \end{minipage}
\end{figure}

Notice that all the columns of $\wt X$ have the same norm since
\begin{align*}
    \left( \frac 14 + \gamma^2 \right)\left(\frac{\ve}{ K\sqrt{\frac 14+\gamma^2}}+1\right)^2
&= \left(\frac\ve K+\sqrt{\frac 14+\gamma^2}\right)^2\\
&= \left(-\frac\ve K+\sqrt{\frac 14+\delta^2}\right)^2 = 
 \left( \frac 14 + \delta^2 \right)\left(-\frac{\ve}{ K\sqrt{\frac 14+\delta^2}}+1\right)^2. 
\end{align*}
Hence SPA can select the last one, $v$, at the second step. 
In that case, the final error of SPA will be larger than $\min_i\|w_i-v\|$. Moreover, recall that $v$ is $\wt v$ plus a perturbation of norm $\ve$, so $\|w_i-v\| \ge \|w_i-\wt v\|-\ve$. First of all, $\|w_1-\wt v\| \ge 3K/4> 9\ve /(4 \delta^2)$ and  $\|w_3-\wt v\| \ge K/4> 3\ve /(4 \delta^2)$. For the second one, we get
\begin{align*}
    \|w_2-\wt v\| 
    &\ge K\frac {\delta -\gamma}{4\delta}
    = K\frac {\delta^2 -\gamma^2}{4\delta(\delta +\gamma)}
    \ge  K\frac {\left(\frac 14 + \delta^2\right) -\left(\frac 14 + \gamma^2\right)}{8\delta^2}\\&= 
     K\frac { \sqrt{\frac 14 + \delta^2} - \sqrt{\frac 14 + \gamma^2}}{8\delta^2}\left( \sqrt{\frac 14 + \delta^2} + \sqrt{\frac 14 + \gamma^2}\right) \ge 
      K\frac { 2\ve /K}{8\delta^2} = \frac {\ve }{4\delta^2}.
\end{align*}
As a consequence, the error will be larger than 
$$
\frac {\ve }{4\delta^2}-\ve\ge \frac {3\ve }{16\delta^2}\ge \frac {3\ve }{16} \frac{\mathcal K^2(W)}{64}.
$$ 
\end{proof}

\section{Tightness of preconditioned SPA variants} \label{sec:precond} 

In this section, we show that the two preconditioned variant of SPA described in Section~\ref{sec:draw2}, namely SPA$^2$ and MVE-SPA, have tight error bounds. 
We do this again with examples achieving the worst-case bounds.

\subsection{Tightness of SPA$^2$}  \label{sec:spaspa} 

Recall the SPA$^2$ first runs SPA on $X$ to obtain $W = X(:,\mathcal{J})$, 
and then re-runs SPA on $W^\dagger X$ to obtain $W' = X(:,\mathcal{J}')$. It turns out this strategy reduces the error bound of SPA to $\mathcal{O}\left(\varepsilon  \mathcal K(W) \right)$. 
\begin{theorem}\cite[Theorem~4.1]{gillis2015enhancing} \label{th:spaspa}
    Under the same assumptions and bounds on the noise as Theorem~\ref{th:spanoise}, the index set $\mathcal{J}'$ extracted by SPA$^2$ satisfies the following error bound: 
    \begin{equation*}
\min_{\pi \in S^r}\max_{1 \leq k \leq r} 
\|W(:,k) - X(:,\mathcal{J}'_{\pi(k)}) \| 
\leq 
\mathcal{O}\left(  \varepsilon \mathcal K(W) \right). 
\end{equation*}
\end{theorem}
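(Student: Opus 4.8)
The plan is to treat SPA$^2$ as a two-stage procedure and to exploit the fact that preconditioning by the pseudoinverse of the first SPA output replaces the vertex matrix $W$ by a near-identity, and hence perfectly conditioned, matrix.

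First I would record what the first stage buys us. By Theorem~\ref{th:spanoise}, the columns $W_0 := X(:,\mathcal{J})$ satisfy, after matching them to the columns of $W$ by a permutation that I absorb into the indexing, $W_0 = W + E$ with $\max_k\|E(:,k)\| \le \mathcal{O}(\varepsilon\,\mathcal K^2(W))$. A singular-value perturbation argument together with the bound on the noise of Theorem~\ref{th:spanoise} (whose absolute constant I take small enough) gives $\sigma_r(W_0)\ge \sigma_r(W)/2$, so $W_0$ has full column rank and $W_0^\dagger$ is well defined. I then form $Y := W_0^\dagger X = W'H + N'$ with $W' := W_0^\dagger W$ and $N' := W_0^\dagger N$. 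Writing $W = W_0 - E$ and using $W_0^\dagger W_0 = I_r$ gives $W' = I_r - W_0^\dagger E$ with $\|W_0^\dagger E\|\le \|E\|/\sigma_r(W_0) \le \mathcal{O}\!\left(\sqrt r\,\varepsilon\,\mathcal K^2(W)/\sigma_r(W)\right)$, which the noise bound makes at most $1/2$. Hence $\sigma_{\min}(W')\ge 1/2$, $K(W')\le 3/2$, and $\mathcal K(W') = \mathcal{O}(1)$.

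Next I would apply Theorem~\ref{th:spanoise} a second time, to the second SPA run on $Y = W'H + N'$. The matrix $H$ is unchanged, hence still separable with $H^\top e\le e$, and the effective noise is $\varepsilon' := \max_j\|N'(:,j)\| \le \|W_0^\dagger\|\,\varepsilon \le 2\varepsilon/\sigma_r(W)$; the first-stage noise bound forces $\varepsilon'\le \mathcal{O}\!\left(1/(\sqrt r\,\mathcal K^2(W))\right)$, which stays below the threshold $\mathcal{O}\!\left(\sigma_{\min}(W')/(\sqrt r\,\mathcal K^2(W'))\right) = \mathcal{O}(1/\sqrt r)$ needed to invoke the theorem. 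It returns $\mathcal{J}'$ such that, up to a permutation $\pi$, $\|W'(:,k) - Y(:,\mathcal{J}'_{\pi(k)})\|\le \mathcal{O}(\varepsilon'\,\mathcal K^2(W')) = \mathcal{O}(\varepsilon')$. Writing $p := \mathcal{J}'_{\pi(k)}$ and $X(:,p) = Wh_p + N(:,p)$ with $h_p\ge 0$, $e^\top h_p\le 1$, so that $Y(:,p) = W'h_p + N'(:,p)$, this bound reads $\|W'(h_p - e_k)\|\le \mathcal{O}(\varepsilon')$, whence $\|h_p - e_k\| \le \|W'(h_p - e_k)\|/\sigma_{\min}(W')\le \mathcal{O}(\varepsilon')$.

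Finally I would transport the estimate back to the original space, and this is where the improvement from the condition number $\sigma_1(W)/\sigma_r(W)$ to $\mathcal K(W)$ is won, so I expect it to be the delicate point. I would write $\|X(:,p) - W(:,k)\|\le \|W(h_p - e_k)\| + \varepsilon$ and, crucially, avoid the crude estimate $\|W(h_p - e_k)\|\le \|W\|\,\|h_p - e_k\|$, which would only yield $\sigma_1(W)/\sigma_r(W)$ (a factor up to $\sqrt r$ worse than $\mathcal K(W)$). Instead I would use the column-wise bound $\|Wz\|\le K(W)\,\|z\|_1$, valid for any $z$, together with the fact that for the subprobability vector $h_p$ one has $\|h_p - e_k\|_1 \le 2\,(1 - h_p(k)) \le 2\,\|h_p - e_k\|_\infty \le 2\,\|h_p - e_k\|$. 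This gives $\|W(h_p - e_k)\|\le 2K(W)\,\|h_p - e_k\| \le \mathcal{O}(K(W)\varepsilon') = \mathcal{O}(\varepsilon\,K(W)/\sigma_r(W)) = \mathcal{O}(\varepsilon\,\mathcal K(W))$, and since $\varepsilon\le \varepsilon\,\mathcal K(W)$ I conclude $\max_k\|X(:,\mathcal{J}'_{\pi(k)}) - W(:,k)\|\le \mathcal{O}(\varepsilon\,\mathcal K(W))$. The conditioning and noise bookkeeping of the two stages is routine once the hidden constants are chosen small enough; the only genuinely delicate step is this last one, where the simplex structure of $h_p$ must be used through the $\ell_1$/convexity estimate rather than operator-norm inequalities, as otherwise one loses the gain over Theorem~\ref{th:spanoise}.
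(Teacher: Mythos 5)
Your proof is correct. The paper itself imports this statement from the cited reference without reproving it, and your two-stage argument --- precondition with $W_0^\dagger$ so that $W' = W_0^\dagger W$ is $\mathcal{O}(1)$-conditioned, invoke Theorem~\ref{th:spanoise} a second time, then pull the estimate back via $\|W(h_p-e_k)\|\le K(W)\|h_p-e_k\|_1\le 2K(W)\|h_p-e_k\|$ --- is exactly the mechanism of that proof, with the $\ell_1$/column-norm step correctly identified as the point where $K(W)$ replaces $\sigma_1(W)$ and the factor $\mathcal K(W)$ (rather than $\mathcal K^2(W)$ or the full condition number) is obtained.
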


Let us prove that this bound is tight, as soon as $r \geq 2$. 
\begin{theorem} \label{theo:tightSPA2}
    The error bound of Theorem~\ref{th:spaspa} is tight for $r \geq 2$.  
\end{theorem}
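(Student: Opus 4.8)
The plan is to prove tightness by exhibiting an explicit family $\{W,H_\varepsilon,N_\varepsilon\}_\varepsilon$, in the spirit of Theorem~\ref{theo:tightSPA}, on which SPA$^2$ incurs an error of order $\varepsilon\,\mathcal K(W)$. Since SPA$^2$ returns columns of $X$ and separability always supplies a column within $\varepsilon$ of each vertex, the error can reach order $\varepsilon\,\mathcal K(W)\gg\varepsilon$ only if SPA$^2$ selects a \emph{decoy}, i.e.\ a data column at distance $\Omega(\varepsilon\,\mathcal K(W))$ from every vertex. The conceptual obstacle is that preconditioning by $\hat W^\dagger$ is linear and hence preserves extreme points, so a naive decoy that fools the first pass is typically recovered by the second. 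I propose to defeat this by choosing a decoy whose displacement from a genuine vertex lies along a direction that the preconditioner treats as benign, so that the decoy and the good near-vertex column \emph{collapse onto the same transformed point}; SPA$^2$ then cannot distinguish them and may output the decoy.

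I would first set $r=2$ and $m=2$, and take two vertices of very different lengths at a fixed angle $\phi$: a short vertex $w_1$ with $\|w_1\|=1$ and a long vertex $w_2$ with $\|w_2\|=L\gg1$, so that $K(W)=L$, $\sigma_2(W)=\Theta(\sin\phi)$, and hence $\mathcal K(W)=\Theta(L)$. Besides the separable near-vertex columns $w_1+n_1$ and $w_2+n_2$, I would include a decoy $d=\gamma w_1+(1-\gamma)w_2+n_d$ with $1-\gamma=\Theta(\varepsilon)$; its displacement $d-w_1=(1-\gamma)(w_2-w_1)+n_d$ lies essentially along the long edge $w_2-w_1$, and $\|d-w_1\|=\Theta((1-\gamma)L)=\Theta(\varepsilon L)=\Theta(\varepsilon\,\mathcal K(W))$. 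The orthogonal part $n_d$ (of norm $\le\varepsilon$) is spent so that, after the first projection onto the orthogonal complement of $w_2$, the residual of $d$ exactly ties that of $w_1$. At the boundary $\varepsilon=\Theta(\sigma_2(W)\mathcal K(W)^{-1})$ these requirements are simultaneously consistent.

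The first pass selects $w_2$ (the unique longest column) and, after projection, is permitted by the engineered tie to select $d$ rather than $w_1$; thus $\hat W=[\,w_2+n_2,\ d\,]$ already misses the short vertex. The crux is the second pass: because $d-w_1$ is (up to an $\mathcal O(\varepsilon)$ term) parallel to $w_2-w_1$, a direction spanned by the columns of $\hat W$ and \emph{not} amplified by $\hat W^\dagger$, the images $\hat W^\dagger(w_1+n_1)$ and $\hat W^\dagger d=e_2$ differ only by $\mathcal O(\varepsilon)$. Using the remaining noise budget on $n_1$ (and $n_2$) I would tune the transformed norm of the $w_1$-column to be weakly dominated by that of $d$, so that the second pass re-selects the index of $d$. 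The output of SPA$^2$ is then $\{w_2+n_2,\ d\}$, whose best match to $\{w_1,w_2\}$ pairs $d$ with $w_1$ and therefore incurs the error $\|d-w_1\|=\Omega(\varepsilon\,\mathcal K(W))$, establishing tightness. For general $r\ge2$ I would embed this two-vertex core and append $r-2$ well-separated, well-conditioned vertices that both passes extract cleanly and first, leaving the core untouched.

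The main difficulty is to satisfy two tie conditions at once with a single noise budget: in the \emph{original} coordinates the residual of $d$ must tie that of $w_1$ after the first projection (which fixes the orthogonal component of $n_d$ at essentially the full level $\varepsilon$), while in the \emph{preconditioned} coordinates the $w_1$-column must not strictly dominate $d$ (which constrains $n_1$ and $n_2$). These conditions couple the parameters $\phi,\gamma,L,\varepsilon$ and the noise directions, and they must be reconciled while keeping $K(N_\varepsilon)\le\varepsilon$, $H_\varepsilon$ separable and column stochastic, and $\varepsilon\le\mathcal O(\sigma_r(W)\mathcal K(W)^{-1})$. As in the proof of Theorem~\ref{theo:tightSPA}, the cleanest route is to enforce exact norm-ties and verify the resulting inequalities at the boundary of the admissible range; I expect the bulk of the effort to be this explicit calibration rather than any further conceptual difficulty.
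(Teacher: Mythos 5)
Your proposal is correct in outline, but it fools SPA$^2$ by a genuinely different mechanism than the paper does. In the paper's construction ($W=M\,\diag(1,\delta)$ with an orthogonal long/short vertex pair), the \emph{first} SPA pass succeeds: it extracts the two near-vertex columns, so the preconditioner $\wt W$ is a correct-up-to-$\varepsilon$ estimate of $W$. The failure is created entirely by the preconditioning step: a decoy sitting near the short vertex but displaced by $\approx 2\varepsilon/\delta$ along the long direction contributes only a second-order term $\bigl(\tfrac{2\varepsilon}{\delta M}\bigr)^2$ to its preconditioned squared norm, while its deficit $\alpha$ along the short direction contributes first order; choosing $\alpha\sim\varepsilon^2/(\delta M)$ tips the preconditioned norm just above $1$, so the second pass picks the decoy \emph{first}. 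Your route is different: you arrange for the first pass to already extract the decoy (permitted, since the rank-$2$ bound of Theorem~\ref{theo:rank2_SPA_Q} tolerates a first-pass error of order $\varepsilon\,\mathcal K(W)$), and then argue non-recovery because $\hat W^\dagger$ maps $d-w_1\approx(1-\gamma)(w_2-w_1)$ to the $\mathcal O(\varepsilon)$ vector $(1-\gamma)(e_2-e_1)$, collapsing the decoy and the true short vertex in the preconditioned space. The calibration you worry about does close, and more easily than you fear: taking $n_1=n_2=0$ and spending the entire decoy noise budget on $n_d=\varepsilon\,P_{w_2}(w_1)/\|P_{w_2}(w_1)\|$ simultaneously (i) lifts $\|P_{w_2+n_2}(d)\|$ to $\gamma\sin\phi+\varepsilon\ge\sin\phi=\|P_{w_2+n_2}(w_1)\|$ for the first-pass tie, and (ii) since $(\hat W^{-1})_{2,:}$ annihilates $w_2+n_2$ and is positive on $w_1$, makes the $e_2$-component of $\hat W^\dagger w_1$ equal to $\sin\phi/(\gamma\sin\phi+\varepsilon)\le 1$, so the transformed $w_1$-column is weakly dominated by $\hat W^\dagger d=e_2$; both inequalities reduce to the single condition $(1-\gamma)\sin\phi\le\varepsilon$. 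What each approach buys: the paper's version isolates the preconditioning step as the sole culprit (a cleaner message, and with strict rather than tied inequalities), whereas yours gives a more structural explanation of \emph{why} the second pass cannot repair a first-pass mistake whose displacement lies in the range of the extracted columns — an observation that is arguably more portable to other self-preconditioned schemes. Both establish the $\Omega(\varepsilon\,\mathcal K(W))$ lower bound within the admissible noise range.
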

\begin{proof}
    Let $M$ be any positive real number and let $0<\delta < 1/2$,  $0<\ve<\delta M/6$,  $\alpha= \ve^2/(6 \delta M)<\ve /36$. Consider  
\[
W = M\begin{pmatrix}
    1 & 0 \\
    0 &  \delta
\end{pmatrix}, \qquad 
X = \begin{pmatrix}
    M & 0 & \frac{2\ve +\alpha}{\delta} \\
    0 &  \delta M -\ve &  \delta M -\ve -\alpha
\end{pmatrix} = W \begin{pmatrix}
    1 & 0 & z\\
    0 &  1 & 1-z
\end{pmatrix} + 
\begin{pmatrix}
    0 & 0 & 0 \\
    0 &  -\ve & \ve
\end{pmatrix}, 
\]
where 
\[
0<z = \frac{2\ve +\alpha}{\delta M} <  \frac{3\ve }{\delta M} <\frac 12.
\]
The columns of $X$ and $W$ are represented on Figure \ref{fig:sec6_1}. Since $0<\delta M -\ve<M$,  the squared norm of the last column of $X$ is 
\[
\left(\frac{2\ve +\alpha}{\delta}\right)^2 + \left(\delta M -\ve -\alpha\right)^2 < \left(\frac{3\ve }{\delta}\right)^2 + \left(\delta M\right)^2 < \left(\frac{M }{2}\right)^2 + \left(\frac{M }{2}\right)^2 < M^2. 
\]
 SPA thus chooses first the first index, and then the second one since $\delta M -\ve > \delta M -\ve -\alpha$. We  get the preconditioner
\[
\wt W = \begin{pmatrix}
    M & 0 \\
    0 &  \delta M -\ve
\end{pmatrix} \implies 
\wt X = \wt W^{-1} X = \begin{pmatrix}
    1 & 0 & \frac{2\ve +\alpha}{\delta M} \\
    0 &  1 &  1  -\frac{\alpha}{\delta M -\ve }
\end{pmatrix}.
\]
The columns of $\wt X$ are represented on Figure \ref{fig:sec6_2}. Since $\delta M-\ve> 5\delta M/6$, then the norm squared of the last column of $\wt X$ is now
\[
\left(\frac{2\ve +\alpha}{\delta M }\right)^2 + \left( 1  -\frac{\alpha}{\delta M -\ve }\right)^2 >
\frac{4\ve^2 }{(\delta M)^2 } + 1  -\frac{2\alpha}{\delta M -\ve }
>
\frac{4\ve^2 }{(\delta M)^2 } + 1  -\frac 1 5 \frac{2\ve^2}{(\delta M)^2  } > 1, 
\]
so the first index chosen by the algorithm SPA$^2$ is the third one. This means that the final error of SPA$^2$ will be at least the minimum distance between the last column of $X$ and the columns of $W$:
\begin{align*}
    \text{(error of SPA$^2$)}^2 &\ge \min \left\{   \left(\frac{2\ve +\alpha}{\delta } - M\right)^2 + \left( \delta M -\ve -\alpha\right)^2,
    \left(\frac{2\ve +\alpha}{\delta  }\right)^2 + \alpha^2\right\}\\&\ge 
     \min \left\{  
     \left(M- \frac{2\ve +\alpha}{\delta }\right)^2 ,
    \left(\frac{2\ve }{\delta  }\right)^2 
    \right\}\ge 
     \min \left\{  
     \left(M- \frac{3\ve}{\delta }\right)^2 ,
    \left(\frac{2\ve }{\delta  }\right)^2 
    \right\}\\&\ge 
     \min \left\{  
     \left(\frac{3\ve}{\delta }\right)^2 ,
    \left(\frac{2\ve }{\delta  }\right)^2 
    \right\} =  \left(\frac{2\ve }{\delta  }\right)^2 . 
\end{align*}
Since $\ma K(W) = 1/\delta$, we obtain 
  that the error bound of SPA$^2$ must be larger than $2\ma K(W)\ve$.
\end{proof}

\begin{figure}[!htb]
   \begin{minipage}{0.48\textwidth}
     \centering
     \includegraphics[height=170px]{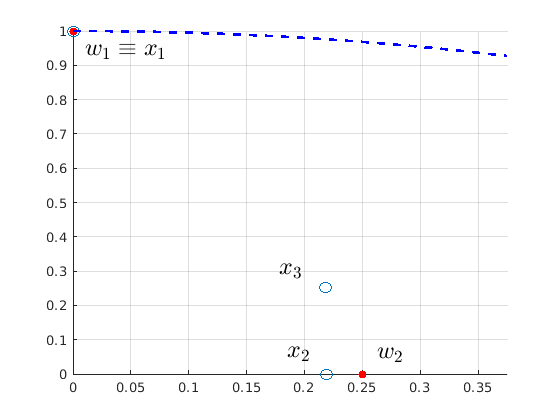}
     \caption{Columns of matrix $X$ as empty blue circles and columns of matrix $W$ as red dots.  The points on the circle (dashed blue line) have the same norm.\label{fig:sec6_1}}
   \end{minipage}\hfill
   \begin{minipage}{0.48\textwidth}
     \centering
     \includegraphics[height=170px]{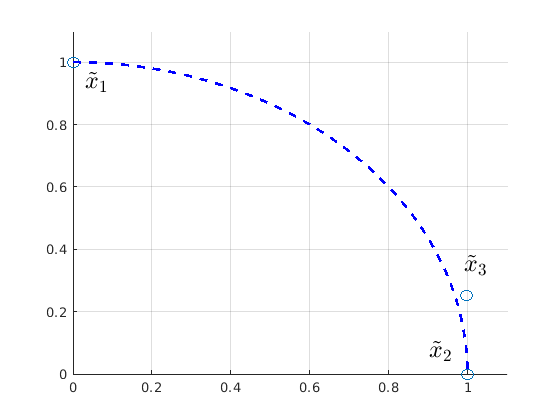}
     \caption{
     Columns of the preconditioned matrix $\wt X$. The points on the circle (dashed blue line) have the same norm. 
     \label{fig:sec6_2}}
   \end{minipage}
\end{figure}


\subsection{Tightness of MVE-SPA} \label{sec:ellipse}

MVE-SPA needs to be applied on an $r$-by-$n$ matrix, so the input matrix needs to be replaced by a low-rank approximation (e.g., using the truncated SVD) as explained in Section~\ref{sec:draw2}. 
Then MVE-SPA first identifies the minimum-volume ellipsoid centered at the origin and containing all data points, defined as $\{ v \in \mathbb{R}^r  \ | \ v^\top  A v \leq 1\}$ where $A \succ 0$, and then applies SPA on $\wt X = A^{1/2} X$ to obtain an index set $\mathcal{J}$. After the preconditioning, all data points previously on the border of the ellipsoid have exactly norm 1, whereas all the other points have norm strictly less than 1. As a consequence, it can be shown that MVE-SPA can extract any index corresponding to data points on the border of the ellipsoid, that is, $\{ v \in \mathbb{R}^r  \ | \ v^\top  A v =  1\}$~\cite{gillis2015semidefinite}; see also \cite{mizutani2014ellipsoidal}. 

\begin{theorem}~\cite[Theorem~2.9]{gillis2015semidefinite} \label{th:mvespa}
Let $X = WH + N \in \mathbb{R}^{r \times n}$ where $\rk(W) = r$, $H \in \mathbb{R}^{r \times n}_+$ is a separable matrix satisfying $H^\top  e \leq e$, and  
\begin{equation*} 
\varepsilon = K(N) 
\leq \mathcal{O}\left(  \frac{\sigma_r(W)}{r \sqrt{r}} \right). 
\end{equation*} 
Then MVE-SPA returns a set of indices $\mathcal{J}$ such that 
\begin{equation*}
\min_{\pi \in S^r}\max_{1 \leq k \leq r} \|W(:,k) - X(:,\mathcal{J}_{\pi(k)}) \|
\leq 
\mathcal{O}\left(  \varepsilon \mathcal K(W) \right). 
\end{equation*}
\end{theorem}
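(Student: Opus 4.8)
The natural plan is to treat MVE-SPA as \emph{SPA applied to preconditioned data} and then reduce to the already-proved robustness of plain SPA (Theorem~\ref{th:spanoise}). Write $A \succ 0$ for the matrix of the computed minimum-volume ellipsoid and set $\tilde{X} = A^{1/2} X = A^{1/2} W H + A^{1/2} N =: Q H + \tilde{N}$, where $Q = A^{1/2} W$ and $\tilde{N} = A^{1/2} N$. Since MVE-SPA is exactly SPA run on $\tilde{X}$, it suffices to (i) show that the preconditioner makes $W$ well-conditioned, i.e.\ $\mathcal{K}(Q) = \mathcal{O}(1)$; (ii) invoke Theorem~\ref{th:spanoise} on $\tilde{X}$; and (iii) transfer the resulting error back to the original coordinates.

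First I would settle the noiseless case $N = 0$. Because $H \geq 0$ and $H^\top e \leq e$, every column of $WH$ lies in $\conv(0, w_1, \dots, w_r)$, so the centered MVE is the optimizer of $\max_{A \succ 0} \det A$ subject to $w_i^\top A w_i \leq 1$ for all $i$. The substitution $B = W^\top A W$ (valid since $W$ is invertible) turns this into $\max \det B$ subject to $B_{ii} \leq 1$, whose unique solution is $B = I$ by Hadamard's inequality; hence $A_0 = (W W^\top)^{-1}$ and $Q_0 = A_0^{1/2} W$ is \emph{orthogonal}. Thus the preconditioner orthonormalizes the vertices exactly, and the only boundary points of the ellipsoid are the $w_i$ (as $\|Q_0 h\| = \|h\|_2 = 1$ with $h \geq 0$, $\|h\|_1 \leq 1$ forces $h = e_i$), which is precisely what lets SPA pick out vertices.

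The crux, and the step I expect to be hardest, is the perturbation analysis showing that the \emph{noisy} optimizer $A$ still yields $\mathcal{K}(Q) = \mathcal{O}(1)$. Writing $\tilde{\varepsilon} := \|A^{1/2}\| \varepsilon$, the upper bound $\|Q e_i\| = \|A^{1/2} w_i\| \leq 1 + \tilde{\varepsilon}$ is easy, since each $w_i$ is within $\varepsilon$ of a genuine data column that sits inside the unit ball after preconditioning. The delicate direction is the lower bound $\sigma_{\min}(Q) = \Omega(1)$, which must use the \emph{maximality} of $\det A$: a slightly shrunk $A' = (1 - \mathcal{O}(\tilde{\varepsilon})) A_0$ stays feasible for the perturbed constraints, so $\det A \geq \det A' = (1 - \mathcal{O}(\tilde{\varepsilon}))^r \det A_0$, and combining this lower bound on $\det Q = \det(A)^{1/2}\det W$ with the column-norm upper bounds (via Hadamard / AM--GM) forces every singular value of $Q$ to remain close to $1$. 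This is exactly where the hypothesis $\varepsilon \leq \mathcal{O}(\sigma_r(W)/(r\sqrt{r}))$ enters: it keeps $r \tilde{\varepsilon}$, and hence the multiplicative factors $(1 \pm \mathcal{O}(\tilde{\varepsilon}))^r$, under control.

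With $\mathcal{K}(Q) = \mathcal{O}(1)$ and $\sigma_{\min}(Q) = \Omega(1)$ established, I would verify that $\tilde{\varepsilon} = K(\tilde{N}) \leq \|A^{1/2}\| \varepsilon$ satisfies the noise hypothesis of Theorem~\ref{th:spanoise} for $\tilde{X} = QH + \tilde{N}$ --- once more guaranteed by the stated bound on $\varepsilon$ --- so SPA returns indices $\mathcal{J}$ with $\|Q e_{\pi(k)} - \tilde{X}(:, \mathcal{J}_k)\| = \mathcal{O}(\tilde{\varepsilon})$ for some permutation $\pi$. It then remains to undo the preconditioning. The direct route, $w_k - X(:, \mathcal{J}_k) = A^{-1/2}(Q e_{\pi(k)} - \tilde{X}(:,\mathcal{J}_k))$ with $A^{-1/2} = W Q^{-1}$ and $\|A^{1/2}\| = \mathcal{O}(1/\sigma_r(W))$ (since $A \approx (WW^\top)^{-1}$), already gives $\mathcal{O}(\varepsilon\, \sigma_1(W)/\sigma_r(W))$; to sharpen $\sigma_1(W)$ into $K(W)$ and land on $\mathcal{O}(\varepsilon \mathcal{K}(W))$, I would instead route the bound through the recovered weights, writing $\tilde{X}(:,\mathcal{J}_k) = Q h_j + \tilde{n}_j$ with $\|h_j - e_{\pi(k)}\| = \mathcal{O}(\tilde{\varepsilon})$ and bounding $\|W(e_{\pi(k)} - h_j)\| \leq \sqrt{r}\, K(W)\, \|e_{\pi(k)} - h_j\|$, the residual dimension factors being consistent with the $r\sqrt{r}$ already present in the noise bound.
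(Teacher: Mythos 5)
This statement is not proved in the paper at all: it is imported verbatim, with citation, from \cite{gillis2015semidefinite} (Theorem~2.9 there), and the paper only uses it as the reference point for the tightness construction in Theorem~\ref{theo:tightMVESPA}. So there is no in-paper proof to compare against; the relevant comparison is with the argument in the cited source, and your outline does reconstruct that argument's essential skeleton correctly: reduce MVE-SPA to SPA on $\tilde X = A^{1/2}X$, identify the noiseless optimizer $A_0=(WW^\top)^{-1}$ via $B=W^\top A W$ and Hadamard (so that $A_0^{1/2}W$ is orthogonal), use feasibility of a shrunk $A_0$ together with maximality of $\det A$ and Hadamard/AM--GM on the column norms to force $\mathcal K(A^{1/2}W)=\mathcal O(1)$, and then invoke Theorem~\ref{th:spanoise} on the preconditioned data. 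This is where the hypothesis $\varepsilon\le\mathcal O(\sigma_r(W)/(r\sqrt r))$ is genuinely needed, exactly as you say, to control the $(1\pm\mathcal O(\tilde\varepsilon))^{r}$ factors (note you also need a small bootstrap to get the a priori bound $\|A^{1/2}\|\le \mathcal O(1)/\sigma_r(W)$ before $\tilde\varepsilon$ is even well controlled).

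The one step I would push back on is the final transfer to the original coordinates. Bounding $\|W(e_{\pi(k)}-h_j)\|\le \sqrt r\,K(W)\,\|e_{\pi(k)}-h_j\|_2$ leaves a spurious $\sqrt r$ in the error bound, and this cannot be ``absorbed into the $r\sqrt r$ of the noise bound'': in this paper the $\mathcal O(\cdot)$ in the error bound is explicitly an absolute constant independent of $r$. The fix is to exploit that $h_j$ lies in $\{h\ge 0,\ e^\top h\le 1\}$: then $\|e_k-h_j\|_1=(1-h_j(k))+\sum_{i\ne k}h_j(i)\le 2(1-h_j(k))\le 2\|e_k-h_j\|_2$, so $\|W(e_k-h_j)\|\le K(W)\|e_k-h_j\|_1\le 2K(W)\|e_k-h_j\|_2=\mathcal O(\varepsilon\,\mathcal K(W))$, with no dimension factor. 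With that substitution your route lands exactly on the stated bound.
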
 
\begin{theorem} \label{theo:tightMVESPA}
The error bound in Theorem~\ref{th:mvespa} is tight.  
\end{theorem}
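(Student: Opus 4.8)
The plan is to exhibit, as was done for Theorem~\ref{theo:tightSPA2}, an explicit parametric family of instances on which MVE-SPA commits an error of order $\varepsilon\,\mathcal K(W)$. The guiding principle must differ from the midpoint-type examples used for plain SPA: a point lying in the middle of an edge of $\conv(W)$ can never be a contact point of the minimum-volume ellipsoid while the two endpoints are, since a chord joining two boundary points of an ellipsoid lies strictly inside it. Instead, I would place the ``bad'' data point \emph{near} the worst-conditioned vertex, at distance $\Theta(\varepsilon\,\mathcal K(W))$ from it, and push it slightly outside $\conv(W)$ so that it steals the role of support point from that vertex (which then gets swallowed).

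Concretely, for $r=2$ I would take $W=\bigl(\begin{smallmatrix} K & 0\\ 0 & \sigma\end{smallmatrix}\bigr)$ with $\sigma\ll K$, so that $K(W)=K$, $\sigma_2(W)=\sigma$ and $\mathcal K(W)=K/\sigma$ is large. The data matrix $X$ has the two columns $w_1,w_2$ (making $H$ separable and column stochastic) together with a third column $v=Wh_3+n_3$, where $\tilde v:=Wh_3=(1-t)w_2+tw_1$ lies on the edge $w_1w_2$ at distance $\approx tK=\Theta(\varepsilon\,\mathcal K(W))$ from $w_2$, and $n_3$ is a perturbation of norm $\varepsilon$ pointing outward, perpendicular to the edge. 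The right scaling is $t\asymp \varepsilon/\sigma$, so that $v$ sits slightly beyond $w_2$ on the outside of $\conv(W)$ while $\|v-w_2\|=\Theta(\varepsilon\,\mathcal K(W))$, and $K(N)=\varepsilon\le\mathcal O\bigl(\sigma_2(W)/(r\sqrt r)\bigr)$ holds in this regime.

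The core computation is to identify the minimum-volume ellipsoid $\{x:x^\top A x\le 1\}$ centered at the origin and to show that $v$ is a contact point while $w_2$ is strictly interior. I would use the elementary fact that for two points the matrix $A=(MM^\top)^{-1}$ with $M=[w_1\;v]$ satisfies $w_1^\top A w_1=\|M^{-1}w_1\|^2=\|e_1\|^2=1$ and likewise $v^\top A v=1$, so $w_1,v$ lie on the boundary; since $A^{-1}=w_1w_1^\top+vv^\top$ realizes the John/KKT optimality condition with unit multipliers summing to $r=2$, and the underlying $\log\det$ program is convex, this certifies that it is globally the MVE of $\{w_1,v\}$. It then remains to check that $w_2$ is interior: writing $w_2=a\,w_1+b\,v$ gives $w_2^\top A w_2=\|M^{-1}w_2\|^2=a^2+b^2$, and with $t\asymp\varepsilon/\sigma$ one finds $a^2+b^2<1$ strictly, so $w_2$ is swallowed and the MVE of the three data points coincides with that of $\{w_1,v\}$.

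Finally, since $w_1$ and $v$ are orthogonal in the preconditioned coordinates $A^{1/2}x$ and both have preconditioned norm exactly $1$, whereas $A^{1/2}w_2$ has norm $<1$, the projection step of SPA preserves the norm of the unselected one of $w_1,v$; hence SPA run on $A^{1/2}X$ extracts $\{w_1,v\}$ and never selects $w_2$, for either tie-breaking order. The error is then at least $\|v-w_2\|\ge\|\tilde v-w_2\|-\varepsilon=\Theta(\varepsilon\,\mathcal K(W))$, matching Theorem~\ref{th:mvespa}; the same instance embeds into $\mathbb R^r$ for $r\ge 2$ by adjoining well-separated, well-conditioned extra vertices that do not interfere with the $\{w_1,w_2,v\}$ contact structure. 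The main obstacle is precisely the balancing act in the choice of $t$: the outward perturbation of size $\varepsilon$ must be large enough to overcome the $\Theta(t)=\Theta(\varepsilon/\sigma)$ gap by which $\tilde v$ sits inside the ellipsoid (so that $v$ genuinely overtakes $w_2$ as a support point), yet $t$ must be large enough to force $\|v-w_2\|=\Theta(\varepsilon\,\mathcal K(W))$. Taking $t\asymp\varepsilon/\sigma$ threads this needle, and supplying a non-asymptotic version of the inequality $a^2+b^2<1$, together with the KKT certification of the MVE, is what the full proof must carry out.
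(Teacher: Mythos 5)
Your proposal is correct and follows essentially the same route as the paper's proof: the same diagonal ill-conditioned $W$, a data point on the edge at distance $\Theta(\varepsilon\,\mathcal K(W))$ from the short vertex $w_2$, pushed by an $\varepsilon$-perturbation outside the origin-centered MVE determined by the other points (with the same scaling $t\asymp\varepsilon/\sigma_2(W)$, which does thread the needle). The only difference is cosmetic: the paper also perturbs $w_2$ inward and then argues the bad point must lie on the boundary of the full MVE so that MVE-SPA \emph{may} select it, whereas your version certifies via the John/KKT conditions that the MVE of $\{w_1,v\}$ already swallows $w_2$ strictly, so the bad point is selected for every tie-break -- a marginally stronger conclusion at the cost of the explicit $a^2+b^2<1$ computation.
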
 
\begin{proof}
    We can use a similar example as for SPA$^2$ in Theorem~\ref{th:spaspa}.  
Let $M$ be any positive real number and let $0<\delta < 1$,  $0<\ve<\delta M/4$. Consider  
\[
W = M\begin{pmatrix}
    1 & 0 \\
    0 &  \delta
\end{pmatrix}, \qquad 
X = \begin{pmatrix}
    M & 0 & \frac{2\ve}{\delta} \\
    0 &  \delta M -\ve &  \delta M -\ve 
\end{pmatrix} = W \begin{pmatrix}
    1 & 0 & z\\
    0 &  1 & 1-z
\end{pmatrix} + 
\begin{pmatrix}
    0 & 0 & 0 \\
    0 &  -\ve & \ve
\end{pmatrix}, 
\]
where $z = 2\ve /(M\delta)<1/2$. The minimum volume ellipsoid going through the first two columns of $X$ is 
\[
\mathcal{E} = \{ v \ | \ 
v^\top  \diag((\delta M-\ve )^2, M^2) v 
= (\delta M-\ve )^2 v_1^2 + M^2 v_2^2 
\leq  M^2 (\delta M-\ve )^2 \}. 
\]
The third column of $X$ is outside this ellipsoid as it can be seen in Figure \ref{fig:sec6_3}, since 
\[
 \frac{4\ve^2(\delta M-\ve )^2/\delta^2 +(\delta M -\ve )^2 M^2  }{M^2 (\delta M-\ve )^2}
>1.
\]
As a consequence, the minimum volume ellipsoid containing all column\revise{s} of $X$ must have the third column on its boundary, and MVE-SPA may identify  it after the preconditioning, since all the columns of the preconditioned matrix $\wt X$ have norm $1$ as it can be seen in Figure \ref{fig:sec6_4}. 
The distance between the third column and the first is at least
\[
M -  \frac{2\ve}{\delta} >  \frac{2\ve}{\delta} =  2\ma K(W)\ve, 
\]
and the distance between the third column and the second is exactly $\frac{2\ve}{\delta} =  2\ma K(W)\ve$. This concludes that the error bound  of MVE-SPA will be
larger than $2\ma K(W)\ve$.  
\end{proof}

\begin{figure}[!htb]
   \begin{minipage}{0.48\textwidth}
     \centering
     \includegraphics[height=170px]{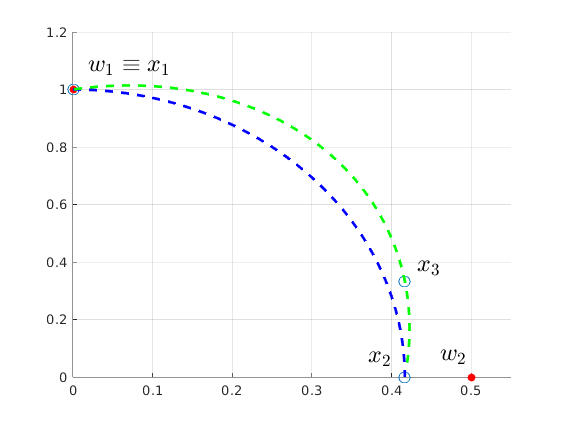}
     \caption{Columns of matrix $X$ as empty blue circles and columns of matrix $W$ as red dots.  The dashed blue line is the minimum volume ellipsoid containing $x_1$ and $x_2$. The dashed green line is the minimum volume ellipsoid containing all columns of $X$.\label{fig:sec6_3}}
   \end{minipage}\hfill
   \begin{minipage}{0.48\textwidth}
     \centering
     \includegraphics[height=170px]{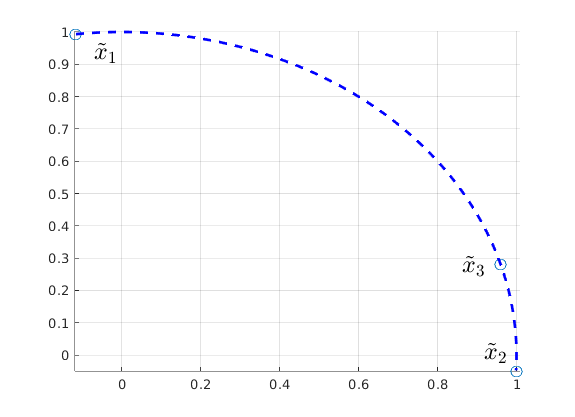}
     \caption{
     Columns of the preconditioned matrix. The points on the circle (dashed blue line) have norm $1$.
     \label{fig:sec6_4}}
   \end{minipage}
\end{figure}

\section{Translation and Lifting} \label{sec:preprocess} 

We have seen that SPA cannot extract $r$ vertices in dimension $r-1$. This can be resolved using T-SPA (see Algorithm~\ref{algo:T-SPA}), via a translation instead of a projection at the first step, given that $H^\top  e=e$, as discussed in Section~\ref{sec:draw1}. 
As briefly mentioned in Section~\ref{sec:draw1}, this is also possible by lifting the data points in one dimension higher adding a vector $c e^\top $ for $c \in \mathbb{R}$ as the last row of $X$, see~\eqref{eq:lift}; see also, e.g.,~\cite{mao2021estimating}.


In this section, we propose a new strategy, combining the lifting procedure with a translation, in order to minimize the conditioning of $W$. 
In fact, under the condition that $H^\top  e = e$, \revise{translating and shifting does not change the solution $H$ and translates and shifts $W$ in the same way} since 
\begin{equation} 
X = WH \; \iff \; \binom{X - v e^\top }{c e^\top } = \binom{W - v e^\top }{c e^\top } H, 
\end{equation} 
for any vector $v$ and constant $c$. 
The problem is as follows: find $v \in \mathbb{R}^m$ and $c \in \mathbb{R}$ such that the conditioning of $\begin{pmatrix}
	 W - v e^\top  \\
	c e^\top  
\end{pmatrix}$ 
is minimized. Of course, $W$ is unknown.


In the following, we propose a simple heuristic to tackle this problem. In order to decrease the largest singular value of $W - v e^\top $, a heuristic is to minimize the sum of its squared singular values, that is, $\|W-ve^\top\|_F^2$. The optimal solution of this problem is given by the average of the column of $W$, that is, $v = \frac{1}{r}We$. Note that, if it happens that the best rank-one approximation of $W$ has the form $u e^\top $ for some vector $u$ (in other terms, $e/\sqrt{r}$ is the first right singular vector of $W$), then the largest singular value of $W-ue^\top $ becomes $\sigma_2(W)$ which \revise{coincides with the smallest  $\sigma_1(W-ze^\top)$ among all $z$ (by the interlacing property of singular values applied to rank $1$ modifications)}.    
In practice, this average is typically well approximated by the average of the columns of $X$, given that the data points are evenly distributed in $\conv(W)$. Hence, we suggest to use $v = \frac{1}{n} Xe$. It is also possible to use more advanced strategies, e.g., take $v$ as the average of the columns of $X$ extracted by SPA~\cite{abdolali2021simplex}. However, for simplicity, we consider in this paper only  $v = \frac{1}{n} Xe$.  
Then, assuming $v = \frac{1}{r} We$, we can characterize the conditioning of $\binom{W - v e^\top }{c e^\top }$ as a function of $c$ as follows, from which we will propose a value for $c$.  
\begin{lemma} \label{lem:lemmaCvalue} 
    Suppose $W_t \in \mathbb R^{m \times r}$ with $m \geq r-1$ and the average of its columns is equal to $0$,  that is, $W_t e=0$. 
    If $W_\ell = \begin{pmatrix}
	 W_t \\
	c \; e^\top  
\end{pmatrix}$,  then \vspace{-0.3cm} 
\begin{equation} \label{eq:condWl}
        \mathcal K(W_{\ell}) = \begin{cases}
        c\sqrt r/\sigma_{r-1}(W_t) & \text{if } c\ge \|W_t \|/\sqrt r,\\
        \|W_t\|/\sigma_{r-1}(W_t) &  \text{if } \|W_t\|/\sqrt r\ge c\ge \sigma_{r-1}(W_t)/\sqrt r,\\
         \|W_t\|/(c\sqrt r) & \text{if } \sigma_{r-1}(W_t)/\sqrt r\ge c.
    \end{cases}
\end{equation} 
\end{lemma}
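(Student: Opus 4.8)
The plan is to reduce the whole computation to the spectrum of the Gram matrix $W_\ell^\top W_\ell$, which is far easier to analyze than $W_\ell$ directly. Since $W_\ell = \binom{W_t}{c\,e^\top}$, we have the clean rank-one update
\[
W_\ell^\top W_\ell = W_t^\top W_t + c^2\, e e^\top,
\]
so it suffices to diagonalize this $r\times r$ symmetric positive semidefinite matrix and then read off $\sigma_{\max}(W_\ell)$ and $\sigma_{\min}(W_\ell)$. The key structural observation, which I would isolate first, is that the hypothesis $W_t e = 0$ makes the two summands \emph{simultaneously} diagonalizable: $e$ lies in $\ker(W_t)$, hence $W_t^\top W_t\, e = 0$, while $e$ is at the same time the leading eigenvector of $e e^\top$ because $e e^\top e = r e$. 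Thus the lift contributes along exactly one direction, the one $W_t$ is blind to.

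Concretely, I would choose an orthonormal basis of $\f R^r$ adapted to both terms: take $e/\sqrt r$ together with orthonormal eigenvectors $u_1,\dots,u_{r-1}$ of $W_t^\top W_t$ for its eigenvalues $\sigma_1(W_t)^2 \ge \dots \ge \sigma_{r-1}(W_t)^2$. Since $W_t^\top W_t$ is symmetric and annihilates $e$, the space $e^\perp$ is invariant and the restriction carries precisely the eigenvalues $\sigma_1(W_t)^2,\dots,\sigma_{r-1}(W_t)^2$, so the $u_i$ may be taken with $e^\top u_i = 0$. Then $e e^\top$ kills each $u_i$, giving $W_\ell^\top W_\ell\, u_i = \sigma_i(W_t)^2 u_i$, whereas $W_\ell^\top W_\ell\, e = c^2 r\, e$. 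This exhibits the full eigendecomposition, so the singular values of $W_\ell$ are exactly $\{\,c\sqrt r\,\}\cup\{\,\sigma_i(W_t): 1\le i\le r-1\,\}$. Because $m+1\ge r$ we have $\sigma_{\min}(W_\ell)=\sigma_r(W_\ell)$, and reading off the extremes gives $\sigma_{\max}(W_\ell)=\max\{c\sqrt r,\ \sigma_1(W_t)\}=\max\{c\sqrt r,\ \|W_t\|\}$ and $\sigma_{\min}(W_\ell)=\min\{c\sqrt r,\ \sigma_{r-1}(W_t)\}$.

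The conditioning is then the ratio $\sigma_{\max}(W_\ell)/\sigma_{\min}(W_\ell)$, and the three regimes of \eqref{eq:condWl} fall out of an elementary case analysis on where $c\sqrt r$ sits relative to the two fixed numbers $\sigma_{r-1}(W_t)\le \|W_t\|$: for $c\sqrt r\ge \|W_t\|$ the top is $c\sqrt r$ and the bottom is $\sigma_{r-1}(W_t)$, giving $c\sqrt r/\sigma_{r-1}(W_t)$; for $\sigma_{r-1}(W_t)\le c\sqrt r\le \|W_t\|$ the extremes are $\|W_t\|$ and $\sigma_{r-1}(W_t)$; and for $c\sqrt r\le \sigma_{r-1}(W_t)$ the minimum becomes $c\sqrt r$, giving $\|W_t\|/(c\sqrt r)$. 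The computation is otherwise routine, so the only real content is the simultaneous-diagonalization observation of the first paragraph. The one point I would treat with care is the appearance of $\sigma_{r-1}(W_t)$ rather than $\sigma_r(W_t)$: the constraint $W_t e=0$ forces $\rk(W_t)\le r-1$, so $\sigma_r(W_t)=0$, and it is $\sigma_{r-1}(W_t)$ that is the genuinely smallest surviving singular value and hence equals $\sigma_{\min}(W_\ell)$ once $c\sqrt r$ exceeds it; recording $\|W_t\|=\sigma_1(W_t)\ge \sigma_{r-1}(W_t)$ also confirms that the three intervals for $c$ are consistent and exhaustive.
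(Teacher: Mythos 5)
Your proof is correct and rests on the same key observation as the paper's: since $W_t e = 0$, the lift direction $e/\sqrt r$ decouples from $e^\perp$, so the extreme singular values of $W_\ell$ are $\max\{c\sqrt r,\|W_t\|\}$ and $\min\{c\sqrt r,\sigma_{r-1}(W_t)\}$. The paper obtains this by maximizing/minimizing $\|W_\ell x\|^2=(1-\alpha^2)\|W_t w\|^2+\alpha^2 c^2 r$ over unit vectors $x=\alpha e/\sqrt r+\sqrt{1-\alpha^2}\,w$, whereas you diagonalize the Gram matrix $W_t^\top W_t+c^2ee^\top$ explicitly — an equivalent computation that additionally exhibits the full singular spectrum of $W_\ell$.
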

\begin{proof}
If we decompose $x\in \mathbb R^r$ as $x = \alpha e/\sqrt r + w\sqrt{1-\alpha^2}$ where $\|w\|^2=1$ and $w$ is orthogonal to $e$, then  
\begin{align*}
    \|W_{\ell}\|^2 &= \max_{\|x\|=1} \| W_{\ell} x\|^2  = 
\max_{\|x\|=1} \| W_t x\|^2 + (ce^\top  x)^2 = 
\max_{\|w\|=1,w^\top  e = 0, |\alpha|\le 1} (1-\alpha^2)\| W_t w\|^2 +(\alpha c)^2 r\\
&= 
\max\left\{\max_{\|w\|=1,w^\top  e = 0} \| W_t w\|^2 ,c^2 r\right\}= 
     \max\left\{ \| W_t\|^2,  c^2 r \right\}, 
\end{align*}
so $\|W_{\ell}\| =  \max\left\{ \| W_t\|,  c \sqrt{r} \right\}$. In a totally analogous way we find that $\sigma_r(W_{\ell}) = \min\left\{ \sigma_{r-1}( W_t),  c \sqrt r \right\}$.
\end{proof}



According to~\eqref{eq:condWl}, assuming $v = We/r$, \revise{and denoting by $W_t = W - ve^\top$ the $t$ranslated version of $W$, and by $W_\ell = \begin{pmatrix}
	 W_t \\
	c \; e^\top  
\end{pmatrix}$ the $\ell$ifted version of $W$,} 
the value of $c$ minimizing $\mathcal K(W_{\ell})$ is in the interval 
\[
\left[ \frac{1}{\sqrt{r}} \sigma_{r-1}\left(W-ve^\top \right),  \frac{1}{\sqrt{r}} \left\|W-ve^\top \right\| \right]. 
\] 
How can approximate this interval since $W$ is unknown?  
Since the columns of $X_t = X-ve^\top $ are within the convex hull of the columns of $W_t = W-v e^\top $, we assume $\|W_t\|_F \approx \frac{\sqrt{r}}{\sqrt{n}}\|X_t\|_F$, and hence we approximate the upper bound of the interval using the following approximation: 
$\left\|W_t \right\|_F/\sqrt{r} \approx 
\left\|X_t\right\|_F/\sqrt{n}$. 
For the lower bound, we estimate $\sigma_{r-1}(W_t)$ with $\min_j \|X_t(:,j)\|$. 
Finally, we propose to use $c$ as the average of the estimated lower and upper bounds, 
\begin{equation} \label{eq:cvalue} 
c = \frac{1}{2} \left( \min_i \|X(:,i)-v\| / \sqrt{r}  +  \left\|X - ve^\top \right\|_F/\sqrt{n} \right). 
\end{equation}
However, we have not observed a significant sensitivity of TL-SPA w.r.t.\ to the value of $c$, as hinted in Lemma~\ref{lem:lemmaCvalue}. 
For example, using the average of the absolute values of $X-ve^\top $ provides similar results.  

We will refer to the variant of SPA that applies SPA on $W_\ell$, the translated and lifted data, as translated lifted SPA (TL-SPA)\revise{; see Algorithm~\ref{algo:TL-SPA}.}

\begin{algorithm}[ht!]
\caption{\revise{Translated lifted SPA (TL-SPA)} \label{algo:TL-SPA}}
\begin{algorithmic}[1] 
\revise{
\REQUIRE The matrix $X = WH + N$ where $H \geq 0$, $H^\top  e = e$, $H$ is separable, $W$ is full column rank, 
the number~$r$ of columns to extract.  
 
\ENSURE Set of $r$ indices $\mathcal{J}$ 
such that ${X}(:,\mathcal{J}) \approx W$ (up to permutation). 

    \medskip  
    
\STATE $v = Xe / n$. \vspace{0.1cm}  

\STATE $c = \frac{1}{2} \Big( \min_i \|X(:,i)-v\| / \sqrt{r}  +  \left\|X - ve^\top \right\|_F/\sqrt{n} \Big)$.\vspace{0.1cm}  

\STATE $X_{t\ell} =  
\begin{pmatrix}
	 X-ve^\top \\
	c \; e^\top  
\end{pmatrix}$. \quad \quad \emph{\% Translate and lift}  \vspace{0.1cm}

\STATE $\mathcal J =$ SPA($X_{t\ell}$,$r$). \quad \quad \quad \emph{\% see Algorithm~\ref{algo:spa}} 
   
}
\end{algorithmic}  
\end{algorithm}

\begin{remark}[Implementation of TL-SPA] 
    As for SPA and T-SPA (see Remarks~\ref{rem:spa} and~\ref{rem:tspa}), TL-SPA can be implemented in $\mathcal{O}(r \nnz(X))$ operations (essentially, $r$ matrix-vector products). 
    In particular, $X-ve^\top$ should not be computed explicitely as $v$ could be dense. 
    See \url{https://gitlab.com/ngillis/robustSPA} for a MATLAB implementation. 
\end{remark}


\section{Numerical experiments} \label{sec:numexp}

In this section, we illustrate some of the theoretical findings by comparing the various algorithms discussed in this paper, namely: 
\begin{itemize}
    \item SPA~\cite{arora2013practical, gillis2013fast}, see Algorithm~\ref{algo:spa}. 

    \item T-SPA~\cite{arora2013practical}, see Algorithm~\ref{algo:T-SPA}.  

    \item FAW~\cite{arora2013practical}: it starts with T-SPA and then loops once over all extracted by T-SPA and tries to replace them by increasing the volume of the convex hull of the extracted columns of $X$; see also  Section~\ref{sec:draw2}. 

    \item SPA$^2$~\cite{gillis2015enhancing}:  SPA preconditioned with SPA; see  Section~\ref{sec:spaspa}.  

    \item TL-SPA: it applies SPA on the translated and lifted data matrix as proposed in Section~\ref{sec:preprocess}. 

    \item TL-SPA$^2$: it applies SPA$^2$ on the translated and lifted data matrix as described in Section~\ref{sec:preprocess}. 
    
\end{itemize}

We do not include MVE-SPA as it performs similarly as  SPA$^2$, as reported, e.g., in~\cite[Chapter 7]{gillis2020nonnegative}. 
The code used to run these experiments is available from \url{https://gitlab.com/ngillis/robustSPA}. We do not report computational times as all algorithms are extremely fast, although FAW is about $r$ times slower than SPA, T-SPA and TL-SPA, while SPA$^2$ and  TL-SPA$^2$ are about two times slower. (It takes about 10 seconds on a standard laptop to run the experiments described below, running the 6 algorithms on 6120   matrices.)

\paragraph{Synthetic data sets} 

We use the same synthetic data sets as in ~\cite[Chapter 7.4.6.1]{gillis2020nonnegative}: $X = WH+N$ where 
\begin{itemize}
    \item The entries of $W \in \mathbb{R}^{m \times r}$ are generated uniformly at random in the interval $[0,1]$, \texttt{W = rand(m,r)}. In this case, $W$ is well-conditioned with high probability.  
    To make $W$ ill-conditioned, we compute the compact SVD of $W = U \Sigma V^\top $, then update $W \leftarrow U \Sigma' V^\top $ where $\Sigma'$ is a diagonal matrix whose entries are log-spaced between $10^{-6}$ and 1, hence \revise{$\frac{\sigma_1(W)}{\sigma_r(W)} = 10^{6}$ and $\mathcal K(W)$ will be smaller, of the order of $5 \cdot 10^5$; see Table~\ref{table:cond}}. 

    \item The matrix $H = [I_r, H']\Pi \in \mathbb{R}^{r \times n}$ where $\Pi$ is a random permutation, and the columns of $H'$ either follow the Dirichlet distribution of parameters 0.5, or each column of $H'$ has two non-zero entries equal to 1/2 for all possible combinations of 2 columns of $W$, so that $n = r + \binom{r}{2}$. The latter case is the so-called middle point experiment since besides the columns of $W$, the columns of $X$ are in-between two columns of $W$. This is a more adversarial case than the Dirichlet distribution, as all data points are on edges of the boundary of $\conv(W)$.  

\item In the case of the Dirichlet distribution, each entry of $N$ follows a Gaussian distribution, \texttt{N = randn(m,n)}, and is then scaled $N \leftarrow \delta \frac{\|WH\|_F}{\|N\|_F} N$ where $\delta$ is the noise level (=ratio of the norms of $N$ and $WH$). 
For the middle point experiments, the middle points are moved towards the outside of the convex hull of $\conv(W)$: if $X(:,j)$ is a middle point, 
$N(:,j) = \delta \left( X(:,j) - \bar{w} \right)$ where $\bar{w}$ is the average of the columns of $W$. The columns of $W$ in $X$ are not perturbed. 
This is an adversarial noise: the data points are moved outside $\conv(W)$. 

\end{itemize}

Table~\ref{table:synthdata} summarizes the 4 types of data sets we will generate. 
\begin{table}[h!] 
\begin{center} 
\caption{Four types of synthetic data sets used in our 4 experiments. \label{table:synthdata}} 
\begin{tabular}{c|ccc|ccc} 
  &  $m$        &  $n$      &  $r$        &  $W$     &  $H$   &  $N$ \\ \hline 
 Exp.\ 1 & 40 & 110 & 10 
 & well-cond. & Dirichlet & Gaussian \\
  Exp.\ 2 & 40 & 110 & 10 
 & ill-cond. & Dirichlet & Gaussian \\
   Exp.\ 3 & 40 & 55 & 10 
 & well-cond. & Middle points & adversarial \\
   Exp.\ 4 & 9 & 55 & 10 
 & rank-def. & Middle points & adversarial \\
\end{tabular} 
\end{center} 
\end{table} 
For each experimental settings, we test 51 different noise levels $\delta$ and, for each noise level, we report the average accuracy (which is the proportion of the indices of the columns of $W$ correctly recovered by the corresponding algorithm) over 30 randomly generated matrices.  

Figure~\ref{fig:Dirichlet} reports the accuracy as a function of the noise level for the Dirichlet experiments (Exp.\ 1 and 2), and Figure~\ref{fig:Middlepoint} for the middle point experiments (Exp.\ 3 and 4). 

\begin{figure*}[p!]
\begin{center}
\begin{tabular}{c}
Exp.~1: Well-condition Dirichlet experiment 
\\
 \includegraphics[width=0.95\textwidth]{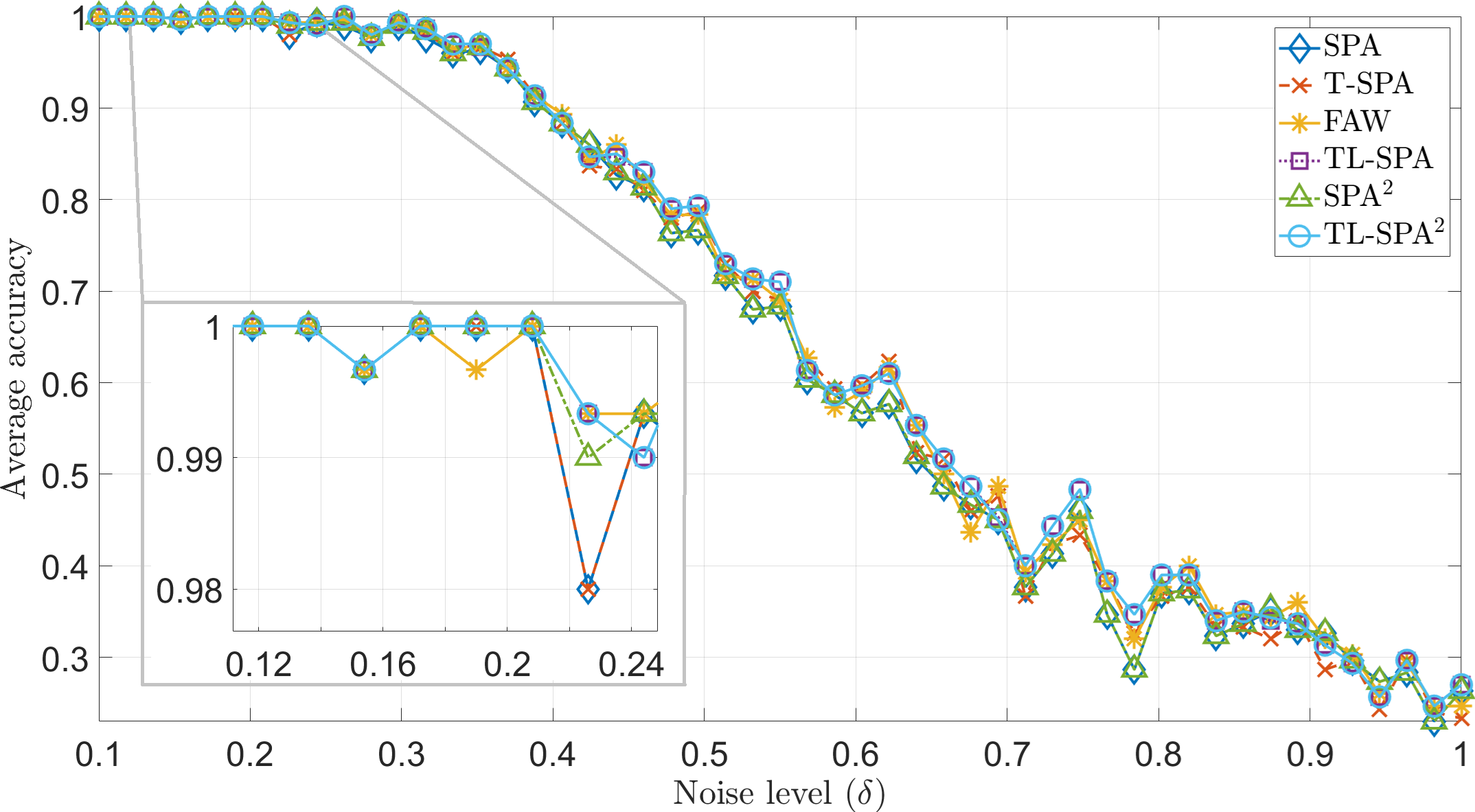} \vspace{0.2cm} \\ 
 Exp.~2:  Ill-condition Dirichlet experiment \\ 
  \includegraphics[width=0.95\textwidth]{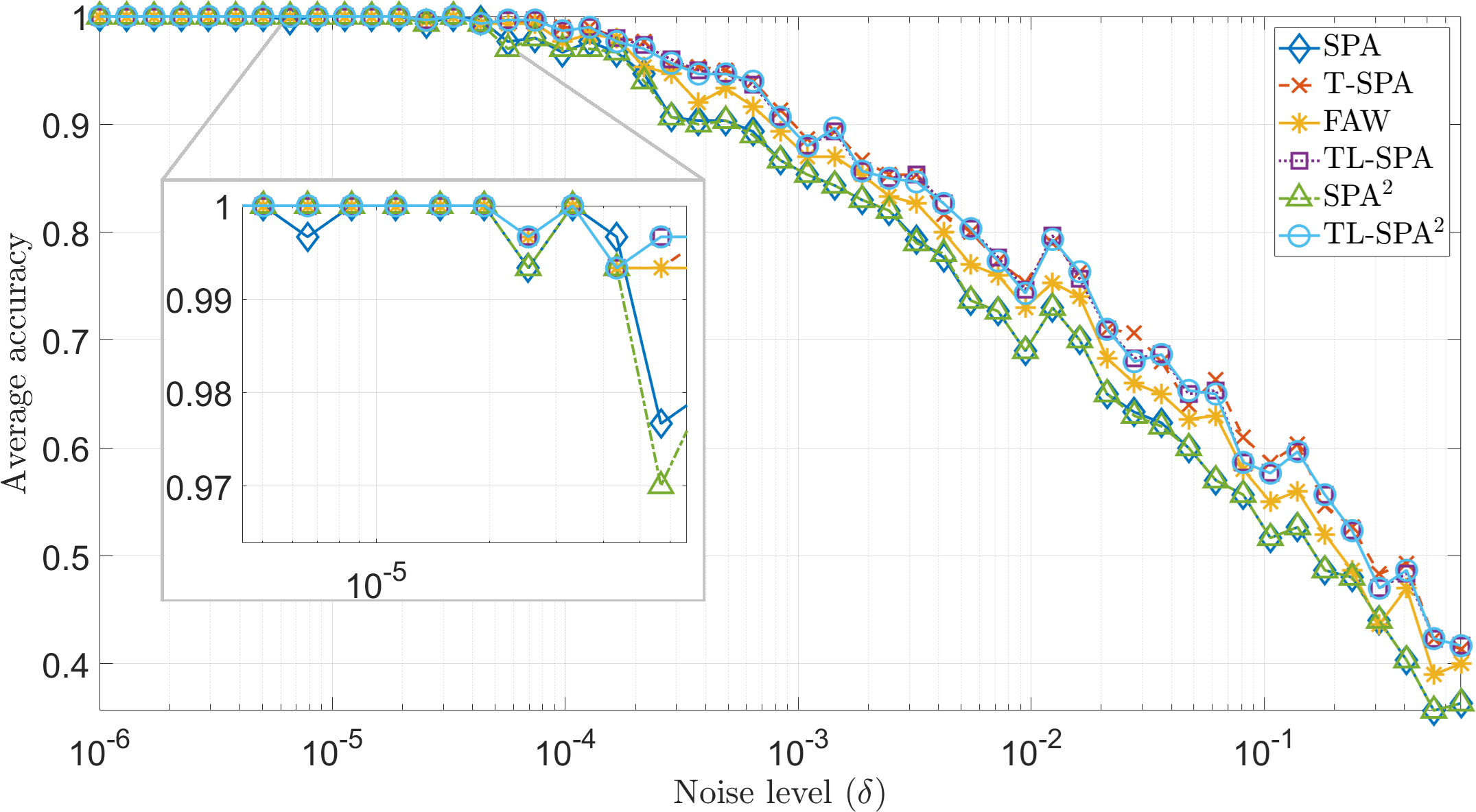} 
\end{tabular}
\caption{Accuracy vs.\ noise level for Dirichlet experiments. \label{fig:Dirichlet}}  
\end{center}
\end{figure*}

\begin{figure*}[p!]
\begin{center}
\begin{tabular}{c}
Exp.~3:  Well-condition middle point experiment \\
   \includegraphics[width=0.95\textwidth]{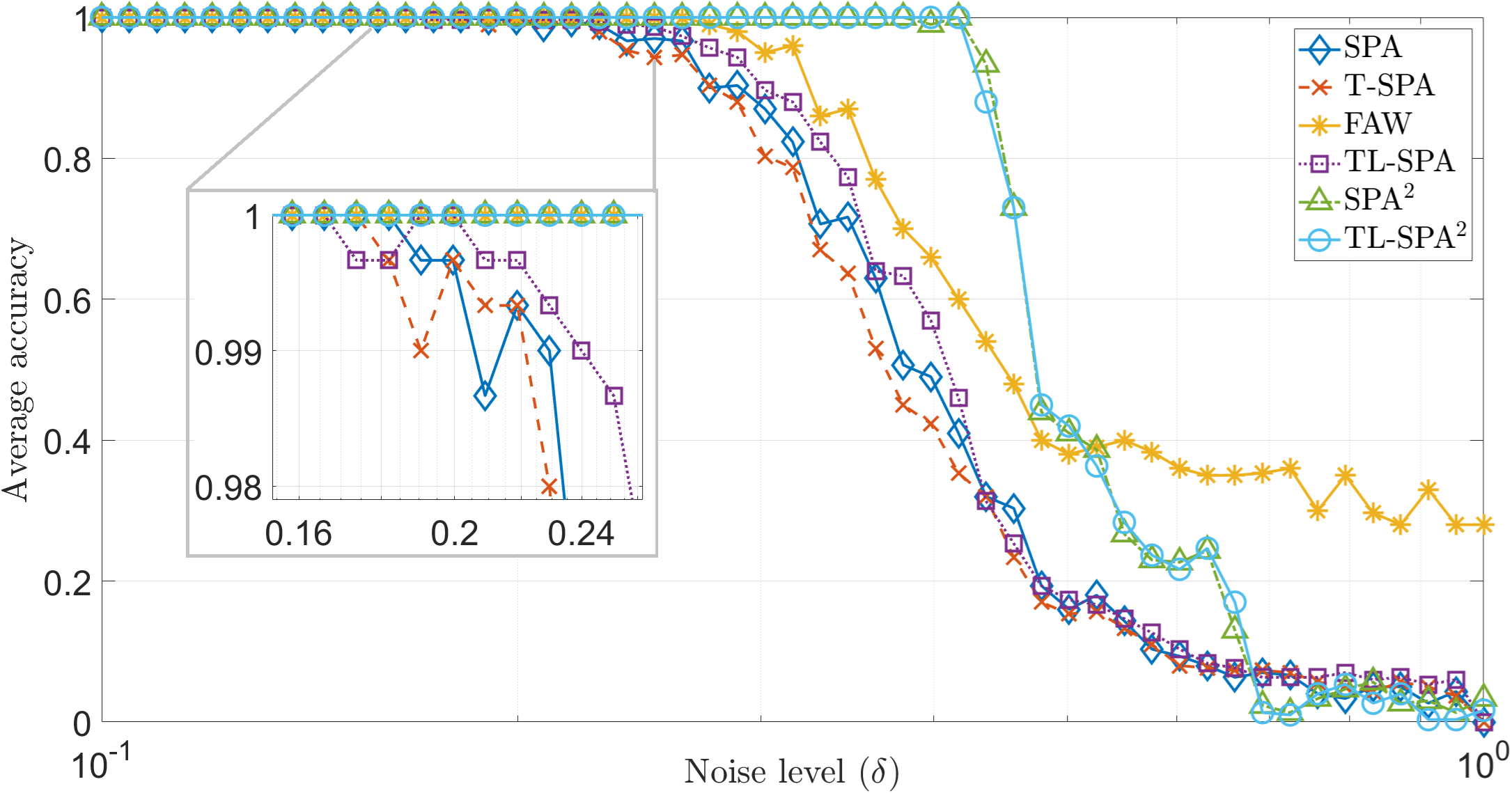}  \vspace{0.2cm} \\ 
Exp.~4:  Rank-deficient middle point experiment \\ 
    \includegraphics[width=0.95\textwidth]{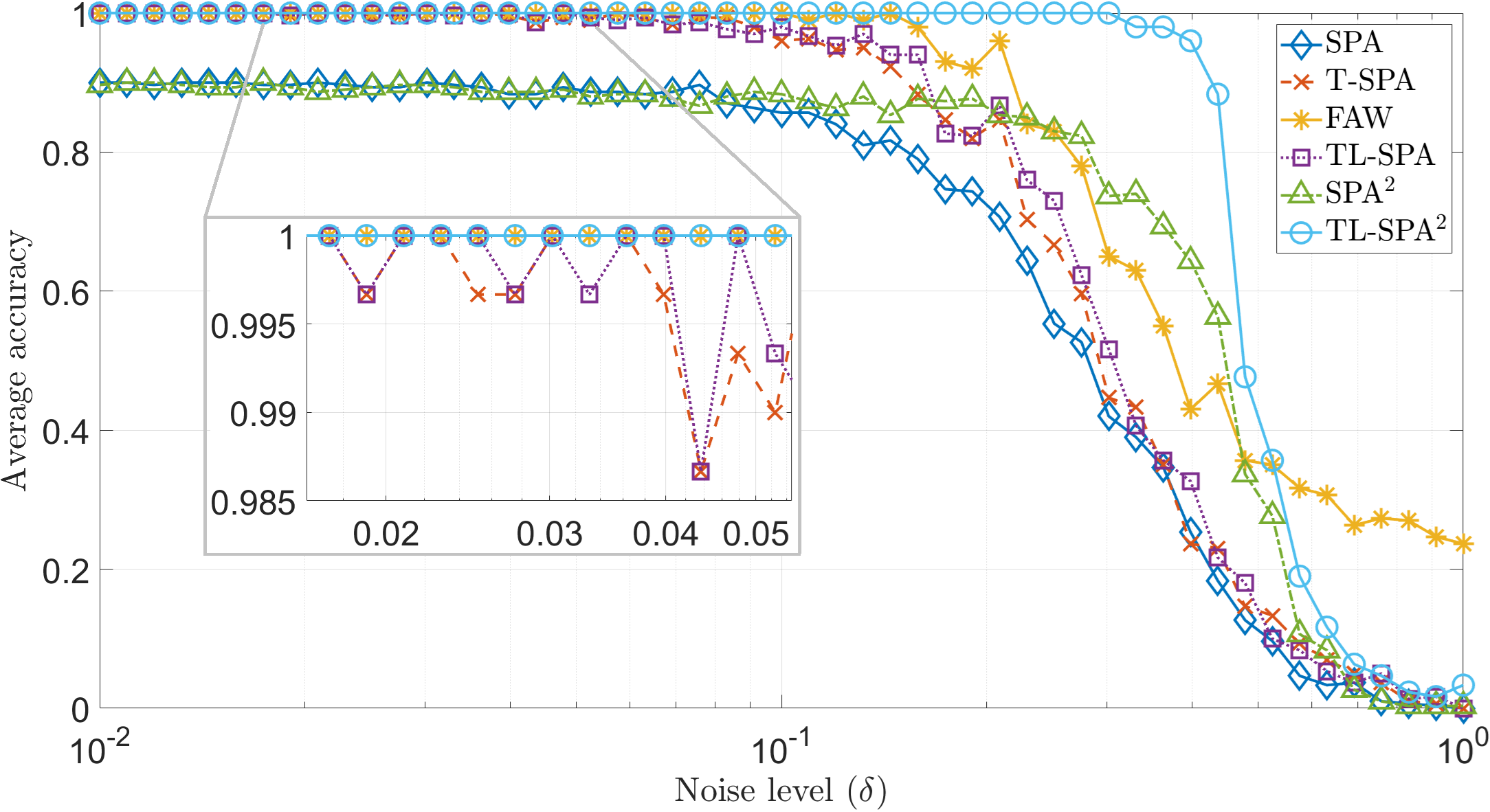} 
\end{tabular}
\caption{Accuracy vs.\ noise level for the middle point experiments.  \label{fig:Middlepoint}}  
\end{center}
\end{figure*} 

Table~\ref{table:SPAlikesynt} reports the robustness of each algorithm in each experimental setting. The robustness is defined as the largest noise level for which the corresponding algorithm recovered exactly all indices for all matrices.  
\begin{table}[h!] 
\caption{Robustness comparison of SPA-like algorithms: largest value of the noise level $\delta$ for which each algorithm recovers all indices correctly for the 30 generated matrices.  
The best result is highlighted in bold.
\label{table:SPAlikesynt}} 
\begin{center}  
\begin{tabular}{c|cccccc} 
  &  SPA        &  T-SPA      &  FAW        &  TL-SPA     &  SPA$^2$    &  TL-SPA$^2$\\ \hline 
 Exp.\ 1 & \textbf{0.136}       & \textbf{0.136}       & \textbf{0.136}       & \textbf{0.136}       & \textbf{0.136}       & \textbf{0.136}       \\ 
 Exp.\ 2 & 5.03$\ 10^{-6}$    & 
 $\mathbf{1.93 \ 10^{-5}}$    & $\mathbf{1.93 \ 10^{-5}}$   & $\mathbf{1.93 \ 10^{-5}}$    & $\mathbf{1.93 \ 10^{-5}}$   &$\mathbf{1.93 \ 10^{-5}}$   \\ 
 Exp.\ 3 & 0.182       & 0.174       & 0.263       & 0.166       & 0.380       & \textbf{0.417}       \\ 
 Exp.\ 4 & 0       & 0.017       & 0.100       & 0.017       & 0       & \textbf{0.302}   
\end{tabular} 
\end{center}
\end{table} 

\newpage 

We observe the following: 
\begin{itemize}
    \item For the well-conditioned Dirichlet experiment  (Exp.\ 1), all SPA variants perform similarly and have the same robustness. The main reason is that the conditioning of $W$ and its translated variant, $W_t$, are close to each other. 
    This is a similar observation as in \cite[Fig~7.6]{gillis2020nonnegative}. 
    
    \item For the ill-conditioned Dirichlet experiment  (Exp.\ 2), SPA and SPA$^2$ perform slightly worse than the other algorithms. The reason is that $W$ has a very small eigenvalue ($10^{-6}$) that is slightly better handled via  translation at the first step. However, the gain of T-SPA, FAW, TL-SPA and TL-SPA$^2$ is not significant because the second singular value is also small (=$4.6 10^{-6}$ due to the logspaced singular values) so that the conditioning of   $W_t$ is not significantly smaller than that of $W$.

    \item For the well-conditioned middle point experiment (Exp.\ 3), there is a clear hierarchy: SPA and T-SPA perform similarly (since the matrix is well-conditioned, translation does not bring much improvements), 
    FAW is more robust than TL-SPA but its accuracy decays more rapidly, SPA$^2$ outperforms FAW (as already noted in \cite[Fig~7.7]{gillis2020nonnegative}), while TL-SPA$^2$ and SPA$^2$ perform similarly although TL-SPA$^2$ is more robust (see Table~\ref{table:SPAlikesynt}).

    \item For the rank-deficient middle point experiment (Exp.\ 4), SPA and SPA$^2$ cannot extract more than 9 indices, since $m=9 < r=10$, and hence cannot have an accuracy larger than $90\%$. This is because $W$ has only $r-1$ rows and hence as an infinite conditioning (the $r$th singular value is equal to zero), 
    while $W_t$ is well conditioned (since the $(r-1)$th singular value of $W$ is not small, with high probability). 
    For the other algorithms, we observe a similar hierarchy as for the well-conditioned middle point experiment (Exp.\ 3), except that our newly proposed TL-SPA$^2$ significantly outperforms the other variants (robustness of 0.302 vs.\ 0.100 for the second best, FAW).  
\end{itemize}  
These experiments illustrate that, in adversarial and rank-deficient or ill-conditioned settings, it is key to use enhanced variants of SPA. In particular, the proposed TL-SPA$^2$ has shown to outperform all other variants in such cases. 

\revise{
To further validate this observation, Table~\ref{table:cond} reports the conditioning of the matrix $W$ after the preprocessing performed by the different algorithms, namely: 
\begin{itemize}
    \item For SPA: $\mathcal K(W)$, since SPA applies directly to the data. 
    
\item For T-SPA and FAW: $\frac{K(W_t)}{\sigma_{r-1}(W_t)}$, where $W_t$ is the matrix $W$ translated by the column of $X$ with the largest $\ell_2$ norm; see Section~\ref{sec:draw1}. 

\item For TL-SPA: $\mathcal K(W_\ell)$, where $W_\ell = \begin{pmatrix}
	 W - v e^\top  \\
	c \; e^\top  
\end{pmatrix}$ with $v = Xe/n$ and $c$ given by~\eqref{eq:cvalue}.  

\item For SPA$^2$: $\mathcal K\left( (X(:,\mathcal J))^\dagger W \right)$,  
where $\mathcal J$ is obtained by SPA. 

\item For TL-SPA$^2$: $\mathcal K\left( (\tilde{X}(:,\mathcal J))^\dagger W_\ell \right)$, where $\mathcal J$ is obtained by TL-SPA, and $\tilde{X} = \begin{pmatrix}
	 X - v e^\top  \\
	c \; e^\top  
\end{pmatrix}$ is the translated and lifted version of $X$. 
\end{itemize}

We observe the following: 
\begin{itemize}
    \item Except for Exp.~2 and $\delta = 0.7$, it is always TL-SPA or TL-SPA$^2$ that lead to the lowest condition numbers.  

    \item For Exp.~1, where $W$ is well conditioned, the variants using the left-inverse preconditioning (that is, TL-SPA$^2$, and to a lesser extent SPA$^2$)  lead to high condition numbers for high noise levels (for $\delta \geq 0.46$). 
    However, this is not really worrisome since at these noise levels, the performance of all algorithms degrade significantly (robustness is lost for $\delta < 0.136$, see Table~\ref{table:SPAlikesynt}). 

    \item Exp.~2, where $W$ is ill conditioned, TL-SPA$^2$ is very effective at reducing the conditioning, especially for low noise levels, which explains its robustness results from Table~\ref{table:SPAlikesynt}. 

    \item Exp.~4, where $W$ is rank deficient, TL-SPA$^2$ is very effective in reducing the conditioning, when compared to T-SPA and TL-SPA, again confirming the results from Table~\ref{table:SPAlikesynt}. 
    
\end{itemize}

\begin{table}[h!] 
\revise{
\caption{\revise{Average condition number, and standard deviation, of the matrix $W$ after preprocessing using the different algorithms, for the 30 generated matrices (same settings as in Table~\ref{table:SPAlikesynt}). The first column reports the noise level $\delta$.  
The best result is highlighted in bold.}
\label{table:cond}} 
\begin{center}  
\begin{tabular}{c|ccccc} 
        Exp. 1  &  SPA &  T-SPA \& FAW        &  TL-SPA               &  SPA$^2$              &  TL-SPA$^2$          \\ \hline  
 0.10       & $ 3.7 \pm 0.4 $     & $ 2.7 \pm 0.3 $     & $ 4.5 \pm 1.4 $     & $ \mathbf{1.2} \pm 0.0 $     & $ \mathbf{1.2} \pm 0.0 $     \\ 
 0.28       & $ 3.9 \pm 0.4 $     & $ 2.8 \pm 0.4 $     & $ 1.9 \pm 0.2 $     & $ 1.8 \pm 0.2 $     & $ \mathbf{1.7} \pm 0.2 $     \\ 
 0.46       & $ 3.7 \pm 0.4 $     & $ 2.9 \pm 0.4 $     & $ \mathbf{2.0} \pm 0.2 $     & $ 23.6 \pm 25.8 $     & $ 22.6 \pm 52.8 $     \\ 
 0.64       & $ 3.7 \pm 0.5 $     & $ 3.1 \pm 0.4 $     & $ \mathbf{2.1} \pm 0.2 $     & $ 75.4 \pm 241.1 $     & $ 212.4 \pm 775.5 $     \\ 
 0.82       & $ 3.8 \pm 0.3 $     & $ 3.6 \pm 0.4 $     & $ \mathbf{2.5} \pm 0.2 $     & $ 58.1 \pm 78.8 $     & $ 135.5 \pm 501.8 $     \\ 
 1       & $ 3.8 \pm 0.3 $     & $ 4.0 \pm 0.5 $     & $ \mathbf{2.6} \pm 0.2 $     & $ 49.1 \pm 60.1 $     & $ 283.6 \pm 621.0 $     \\ 
\hline 
 Exp. 2  &  SPA &  T-SPA \& FAW        &  TL-SPA               &  SPA$^2$              &  TL-SPA$^2$          \\ \hline  
 $10^{-6}$    & $ 54.7 \pm 5.55\cdot 10^4 $       & $ 20.4 \pm 1.93\cdot 10^4 $       & $ 11.9 \pm 1.20\cdot 10^4 $       & $ {1.1} \pm 0.0 $     & $ \mathbf{1.0} \pm 0.0 $     \\ 
 $1.48 \ 10^{-5}$    & $ 55.9 \pm 7.69\cdot 10^4 $       & $ 19.6 \pm 2.03\cdot 10^4 $       & $ 12.2 \pm 1.65\cdot 10^4 $       & $ {6.6} \pm 2.8 $     & $ \mathbf{1.3} \pm 0.1 $     \\ 
 $2.18 \ 10^{-4}$    & $ 59.4 \pm 7.68\cdot 10^4 $       & $ 51.2 \pm 5.66\cdot 10^3 $       & $ 13.0 \pm 1.75\cdot 10^4 $       & $ {802.3} \pm 1157.2 $     & $ \mathbf{118.1} \pm 122.7 $     \\ 
 0.003       & $ 58.5 \pm 7.81\cdot 10^4 $       & $ 48.4 \pm 4.63\cdot 10^3 $       & $ 12.8 \pm 1.74\cdot 10^4 $       & $ {1.13} \pm 1.51\cdot 10^4 $       & $ \mathbf{4.57} \pm 8.37\cdot 10^3 $       \\ 
 0.047       & $ 58.5 \pm 7.39\cdot 10^4 $       & $ {49.6} \pm 4.59 \cdot 10^3 $       & $ 12.7 \pm 1.56\cdot 10^4 $       & $ 8.01 \pm 30.9\cdot 10^5 $       & $ \mathbf{2.19} \pm 3.13\cdot 10^4 $       \\ 
 0.7       & $ 57.4 \pm 6.06\cdot 10^4 $       & $ \mathbf{49.1} \pm 4.68 \cdot 10^3 $       & $ 12.7 \pm 1.26\cdot 10^4 $       & $ 2.21 \pm 5.67 \cdot 10^6 $       & $ 4.31 \pm 5.94\cdot 10^5 $       \\ 
\hline 
 Exp. 3  &  SPA &  T-SPA \& FAW        &  TL-SPA               &  SPA$^2$              &  TL-SPA$^2$          \\ \hline  
 0.10       & $ 3.9 \pm 0.4 $     & $ 2.7 \pm 0.3 $     & $ 2.2 \pm 0.2 $     & $ \mathbf{1.0} \pm 0.0 $     & $ \mathbf{1.0} \pm 0.0 $     \\ 
 0.16       & $ 3.7 \pm 0.4 $     & $ 2.6 \pm 0.3 $     & $ 2.2 \pm 0.2 $     & $ \mathbf{1.0} \pm 0.0 $     & $ \mathbf{1.0} \pm 0.0 $     \\ 
 0.25       & $ 3.7 \pm 0.4 $     & $ 2.6 \pm 0.3 $     & $ 2.2 \pm 0.2 $     & $ 1.6 \pm 0.7 $     & $ \mathbf{1.2} \pm 0.5 $     \\ 
 0.40       & $ 3.7 \pm 0.4 $     & $ 2.6 \pm 0.3 $     & $ \mathbf{2.2} \pm 0.2 $     & $ 2.7 \pm 0.2 $     & $ 2.7 \pm 0.2 $     \\ 
 0.63       & $ 3.8 \pm 0.6 $     & $ 3.0 \pm 0.4 $     & $ \mathbf{2.3} \pm 0.3 $     & $ 3.2 \pm 0.4 $     & $ 3.1 \pm 0.3 $     \\ 
 1       & $ 3.9 \pm 0.4 $     & $ 3.3 \pm 0.4 $     & $ \mathbf{2.3} \pm 0.2 $     & $ 3.5 \pm 0.4 $     & $ 3.6 \pm 0.4 $     \\ 
\hline 
 Exp. 4  &  SPA &  T-SPA \& FAW        &  TL-SPA               &  SPA$^2$              &  TL-SPA$^2$          \\ \hline  
 0.01       & $ \infty $       & $ 72.0 \pm 95.5 $     & $ 57.7 \pm 73.8 $     & $ \infty $       & $ \mathbf{1.0} \pm 0.0 $     \\ 
 0.025       & $ \infty $       & $ 53.1 \pm 72.0 $     & $ 42.6 \pm 57.6 $     & $ \infty $       & $ \mathbf{1.0} \pm 0.0 $     \\ 
 0.063       & $ \infty $       & $ 7.65\pm 38.3\cdot 10^3 $       & $ 6.51 \pm 32.8\cdot 10^3 $       & $ \infty $       & $ \mathbf{1.2} \pm 0.5 $     \\ 
 0.16       & $ \infty $       & $ 407 \pm 1610 $     & $ 324 \pm 1252 $     & $ \infty $       & $ \mathbf{1.8} \pm 0.7 $     \\ 
 0.40       & $ \infty $       & $ 56.5 \pm 94.6 $     & $ 41.4 \pm 72.1 $     & $ \infty $       & $ \mathbf{2.9} \pm 0.4 $     \\ 
 1      & $ \infty $       & $ 195 \pm 600 $     & $ 129 \pm 416 $     & $ \infty $       & $ \mathbf{3.5} \pm 0.4 $     \\  
\end{tabular} 
\end{center}
}
\end{table}  
}
 
\revise{
\begin{remark}[Sensitivity of TL-SPA to the lift parameter] In Section~\ref{sec:preprocess}, we proposed a lift parameter for TL-SPA based on Lemma~\ref{lem:lemmaCvalue}; see~\eqref{eq:cvalue}. 
This is a heuristic choice with no theoretical guarantees since $W$ is unknown in practice.   
However, we have observed that TL-SPA is not too sensitive to the choice of this parameter. 
We have rerun TL-SPA with the lift parameter $c$ defined in~\eqref{eq:cvalue} multiplied by a factor $\alpha \in \{ 0.1, 0.5, 2, 10\}$. 
Table~\ref{table:TLspacond} reports the average condition number, $\mathcal K(W_\ell)$, for all experiments. 
Except when $\alpha = 10$ that leads to significantly worse results, the other condition numbers are very close to each other.  
\begin{table}[h!] 
\revise{
\caption{\revise{Average conditioning $\mathcal K(W_\ell)$ using different shifts $c$ defined in~\eqref{eq:cvalue} multiplied by a factor $\alpha \in \{ 0.1, 0.5, 2, 10\}$. Same settings as in Table~\ref{table:cond} but the results are averaged over all noise levels. The best result is highlighted in bold.}
\label{table:TLspacond}} 
\begin{center}  
\begin{tabular}{c|ccccc} 
              &  $\alpha = 1/10$ &  $\alpha = 1/2$ &  $\alpha = 1  $ &  $\alpha = 2  $ &  $\alpha = 10 $\\ \hline  
Exp. 1  
& $ 6.3 \pm 6.4 $     
& $ 3.0 \pm 2.6 $     
& $ \mathbf{2.6} \pm 1.1 $     
& $ 3.0 \pm 0.9 $      
& $ 10.6 \pm 6.2 $     \\
Exp. 2  
& $ \mathbf{12.4} \pm 1.60\cdot 10^4 $       
& $ \mathbf{12.4} \pm 1.58\cdot 10^4 $       
& $ 12.5 \pm 1.57\cdot 10^4 $        
& $ 13.0 \pm 1.54\cdot 10^4 $        
& $ 22.9 \pm 2.44\cdot 10^4 $       \\ 
Exp. 3 
& $ 4.2 \pm 0.3 $      
& $ \mathbf{1.9} \pm 0.2 $      
& $ 2.2 \pm 0.2 $     
& $ 3.2 \pm 0.4 $     
& $ 13.6 \pm 1.6 $     \\ 
Exp. 4 
& $ \mathbf{1.03} \pm 11.8 \cdot 10^3 $       
& $ 1.07 \pm 12.2\cdot 10^3 $        
& $ 1.18 \pm 13.4\cdot 10^3 $        
& $ 1.56 \pm 17.6\cdot 10^3 $        
& $ 5.92 \pm 66.4\cdot 10^3 $       
 \end{tabular} 
\end{center}
}
\end{table} 
\end{remark}
}

\newpage

\section{Conclusion and further work} 

In this paper, we revisited the robustness to noise of SPA and variants. 
We improved previous error bounds for the first step of SPA and when $r=2$ (Theorems~\ref{theo:single_step_SPA} and~\ref{theo:rank2_SPA_Q}), and  
for the first two steps and when $r=3$ for the translated variant of SPA, T-SPA (Theorems~\ref{theo:firsttwo_T-SPA} and~\ref{theo:rank3_T-SPA}).  
We then proved that, for $r \geq 3$, the original robustness result of SPA~\cite[Theorem~3]{gillis2013fast} is tight (Theorem~\ref{theo:tightSPA}), an important open question in the literature~\cite[p.~231]{gillis2020nonnegative}, using a well-constructed family of matrices that achieve the worst-case error bound. 
We also proved the tightness of the error bounds for two preconditioned variants of SPA, namely SPA preconditioned with itself (SPA$^2$, Theorem~\ref{theo:tightSPA2}) and SPA preconditioned with minimum-volume ellipsoid (MVE-SPA, Theorem~\ref{theo:tightMVESPA}). 
Finally, we proposed a new SPA variant that uses a translation and lifting of the data points, namely TL-SPA, and that allows us to improve the robustness of SPA by reducing the condition number of $W$. We illustrated this on numerical experiments. 

Further work include the following: 
\begin{itemize}
    \item TL-SPA does not have provable  robustness guarantees as the choice of the translation and lifting may influence the conditioning of $W$ in an unexpected way. Providing such guarantees would be important to better understand its behavior. This could also lead to new, more clever, ways to preprocess the input data matrix $X$. 

    \item Tightness of other separable SSMF algorithms could also be studied, e.g., the ones  based on convex relaxations (see Section~\ref{sec:draw2}). 
    Maybe our counter examples for SPA, SPA$^2$ and MVE-SPA could be the basis for such an analysis. 

\item Comparing the performance of the SPA variants for real-world applications would help understand which variant to use in which situation.

\end{itemize}

\revise{
\section*{Acknowledgments}  We are grateful to the anonymous reviewers who
carefully read the manuscript, their feedback helped us improve our paper.  
}

\appendix

\section{Proofs} \label{app:proofs}

\subsection{Proof of Lemma~\ref{lem:median_point}} \label{app:proofsLem1}

\begin{proof} 
  Given a couple of distinct indices $i,j$, let $W_{i,j} = [w_i, w_j]$. We have  
    \[
      \left\| \frac{w_i+w_j}2 \right\|^2  =  \frac 12\|w_i\|^2 + \frac 12\|w_j\|^2- \frac 14\|w_i-w_j\|^2\le K(W_{i,j})^2 - \frac 14\|W_{i,j}(e_1-e_2)\|^2 
      \le K(W_{i,j})^2 - \frac 12\sigma_2(W_{i,j})^2. 
    \]
     This implies 
    \begin{align*}
           \left\| \frac{w_i+w_j}2 \right\|&\le \sqrt{ K(W_{i,j})^2 - \frac 12\sigma_2(W_{i,j})^2}
    \le  \sqrt{ K(W_{i,j})^2 - \frac 12\sigma_2(W_{i,j})^2 + \frac{\sigma_2(W_{i,j})^4}{16K(W_{i,j})^2}}\\& = K(W_{i,j}) - \frac{\sigma_2(W_{i,j})^2}{4K(W_{i,j})}=
     K(W_{i,j}) - \frac{\sigma_2(W_{i,j})}{4\mathcal K(W_{i,j})}. 
    \end{align*}
    By the theorem of interlacing of singular values $\sigma_2(W_{i,j})\ge \sigma_r(W)$ and $\mathcal K(W_{i,j})\le \mathcal K(W)$, so 
    \begin{align*}
         \left\| \frac{w_i+w_j}2 \right\|&\le    K(W_{i,j}) - \frac{\sigma_2(W_{i,j})}{4\mathcal K(W_{i,j})}\le 
            K(W_{i,j}) - \frac{\sigma_r(W)}{4\mathcal K(W)}
         \le \max\{\|w_i\|,\|w_j\|\} -2\varepsilon. 
    \end{align*}
\end{proof}

\subsection{Proof of Lemma~\ref{lem:decomposition_convex}} \label{app:proofsLem2}

\begin{proof}[Proof of Lemma~\ref{lem:decomposition_convex}]
        Notice that since all $w_i$ and all $v_i$ with $i\ge 1$ are in $C$, then  $\conv(v_1,\dots,v_r,w_1,\dots,w_r)\cu C$. As a consequence, to prove $V\setminus C\cu \conv(v_0,w_1,\dots,w_r)$ we just need to show that 
    \[
    V = \conv(v_1,\dots,v_r,w_1,\dots,w_r) \cup \conv(v_0,w_1,\dots,w_r).
    \]
Let $\wt v_i := v_i-v_0$ and $\wt w_i := w_i - v_0 = \alpha_i\wt v_i$.  By the definition of convex hull, $v\in V$ if and only if there exist nonnegative $\lambda_0,\dots, \lambda_r$ that sum to one such that $\wt v:= v - v_0 =-v_0+ \sum_i \lambda_i v_i = \sum_{i\ge 1}\lambda_i\wt v_i$. 
If $\sum_{i\ge 1} \lambda_i/\alpha_i\le 1$, then
\[
\wt v = \sum_{i\ge 1}\frac{\lambda_i}{\alpha_i}\wt w_i\implies 
v = v_0\left(1-\sum_{i\ge 1}\frac{\lambda_i}{\alpha_i}\right) +\sum_{i\ge 1} \frac{\lambda_i}{\alpha_i} w_i\in \conv(v_0,w_1,\dots,w_r).
\]
Otherwise, $\sum_{i\ge 1} \lambda_i/\alpha_i> 1$, but since $\sum_{i\ge 1} \lambda_i = 1-\lambda_0\le 1$ then there exists $t\in [0,1)$ such that $\sum_{i\ge 1} \lambda_i/(1 + t(\alpha_i-1))= 1$.
Let now  $\wt\lambda_i:= \lambda_i/(1 + t(\alpha_i-1))$ for every $i\ge 1$, so that
 $\sum_{i\ge 1}\wt \lambda_i=1$ and
\[
t\wt\lambda_i \wt w_i + (1-t)\wt\lambda_i  \wt v_i = (1+(\alpha_i-1)t)\wt\lambda_i \wt v_i=\lambda_i\wt v_i.
\]
Therefore 
\[
\wt v = \sum_{i\ge 1}t\wt\lambda_i \wt w_i + (1-t)\wt\lambda_i  \wt v_i
\implies  v = \sum_{i\ge 1}t\wt\lambda_i  w_i + (1-t)\wt\lambda_i  v_i \in \conv(v_1,\dots,v_r,w_1,\dots,w_r).
\]
\end{proof}

\subsection{Proof of Lemma~\ref{lem:translated_least_sv}}  \label{app:proofsLem3} 

\begin{proof}
     Notice that $\sigma_{r-1}(\hat W) = \sigma_{r-1}(\wt W - ve^\top )$, where 
    $$
    \wt W = (w_1,\, w_2, \dots,w_{r-1},\,v) =
    W \begin{pmatrix}
         & & \lambda_1\\ 
            I_{r-1} & &\vdots \\
                 & & \lambda_{r-1}\\
        0_{1\times(r-1)}& & \lambda_r
    \end{pmatrix}= W (I + \wt \lambda e_r^\top ), 
    $$
 with $\wt \lambda_i = \lambda_i$ for $1\le i<r$ and $-\wt \lambda_r = 1-\lambda_r = \lambda_1+\dots+\lambda_{r-1}$.    Due to the interlacing property of singular values, we have that $\sigma_{r-1}(\wt W - ve^\top )\ge \sigma_{r}(\wt W)\ge \sigma_{r}( W)\sigma_{r}(I + \wt \lambda e_r^\top )$.  Moreover, due to Weyl's perturbation law,
 \[
 \sigma_r(I + \wt \lambda e_r^\top ) \ge 1 - \|\wt \lambda\| = 1 - \sqrt{\lambda_1^2 + \dots + \lambda_{r-1}^2 + (1-\lambda_r)^2} 
 \ge  1 - \sqrt{2(1-\lambda_r)^2}  \ge 
 1 - \frac{\sqrt{2}}2.
 \]
 This lets us conclude that $\sigma_{r-1}(\hat W)\ge \sigma_{r}( W)\frac{2-\sqrt 2}{2}$. For the second part, it is enough to notice that 
 $\|v\|\le K(W)$ since it belongs to $\conv(W)$, so $K(\hat W)\le 2K(W)$ is immediate.
\end{proof}

\subsection{Proof of Lemma~\ref{lem:first_step_TSPA}}  \label{app:proofsLem4} 

\begin{proof}
  Since  $c\le 1/8$, we can apply Theorem \ref{theo:single_step_SPA} and find that $\|x_1-w_1\| \le33\mathcal K(W)\ve$. The translation brings the matrix $X$ into   $\hat X$  as in the hypothesis. 
     From lemma \ref{lem:translated_least_sv}, we get that 
     \[ \sigma_{r-1}(\hat W) \ge \frac{2-\sqrt 2}{2}\sigma_r(W) > 0,
     \]  
so $\hat W\in \f R^{m\times(r-1)}$ is full rank. Notice also that 
\[
 \|r_1\| \le\|x_1-w_1\| + \|x_1-w\|\le
 34\mathcal K(W)\ve\le34c\sigma_r(W)\le 
 \frac{68c}{2-\sqrt 2} \sigma_{r-1}(\hat W)\le
 \frac 12 \sigma_{r-1}(\hat W).
 \]
Lastly, $K(\hat W) \le 2K(W)$ that implies
 \[
  \sigma_r(W) \mathcal K(W)^{-1} 
  =  \sigma_r(W)^2 K(W)^{-1}
  \le 
 \frac { 8}{\left(2-\sqrt 2   \right)^2} \sigma_{r-1}(\hat W)^2  K(\hat W)^{-1} = 
 \frac { 4}{3-2\sqrt 2} \sigma_{r-1}(\hat W) \mathcal  K(\hat W)^{-1} 
 \]
 and 
 \[
\mathcal K(\hat W) = \sigma_{r-1}(\hat W) ^{-1}K(\hat W)\le 
 \frac { 4}{2-\sqrt 2  }\sigma_{r}( W) ^{-1}K(W)= 
  \frac { 4}{2-\sqrt 2  }\mathcal K( W). 
 \]
\end{proof}

\bibliographystyle{spmpsci}
\bibliography{references}

\end{document}